\date{\today}
\numberwithin{equation}{section}
\newtheorem{theorem}{Theorem}[section]
\newtheorem{corollary}[theorem]{Corollary}
\newtheorem{lemma}[theorem]{Lemma}
\newtheorem{proposition}[theorem]{Proposition}
\newtheorem{definition-theorem}[theorem]{Definition-Theorem}
\newtheorem{definition-proposition}[theorem]{Definition-Proposition}
\theoremstyle{definition}
\newtheorem{definition}[theorem]{Definition}
\newtheorem{assumption}[theorem]{Assumption}
\newtheorem{remark}[theorem]{Remark}
\newtheorem{example}[theorem]{Example}
\newtheorem{condition}[theorem]{Condition}
\renewcommand{\P}{\mathit{P}}
\newcommand{\soc}{\operatorname{soc}\nolimits}
\newcommand{\Hom}{\operatorname{Hom}\nolimits}
\newcommand{\RHom}{\mathbf{R}\strut\kern-.2em\operatorname{Hom}\nolimits}
\newcommand{\Image}{\operatorname{Im}\nolimits}
\DeclareMathOperator{\moduleCategory}{\mathsf{mod}} \renewcommand{\mod}{\moduleCategory}
\DeclareMathOperator{\ind}{\mathsf{ind}}
\DeclareMathOperator{\add}{\mathsf{add}}
\DeclareMathOperator{\Fac}{\mathsf{Fac}}
\newcommand{\Hasse}{\operatorname{Hasse}\nolimits}
\DeclareMathOperator{\resdim}{\!\mbox{-}\mathrm{res}\mbox{-}\mathrm{dim}}
\DeclareMathOperator{\supp}{\mathrm{supp}}
\DeclareMathOperator{\rep}{\mathsf{rep}}
\DeclareMathOperator{\intresdim}{\mathrm{int}\mbox{-}\mathrm{res}\mbox{-}\mathrm{dim}}
\DeclareMathOperator{\intresgldim}{\mathrm{int}\mbox{-}\mathrm{res}\mbox{-}\mathrm{gldim}}
\DeclareMathOperator{\resgldim}{\!\mbox{-}\mathrm{res}\mbox{-}\mathrm{gldim}}
\newcommand{\conv}{\mathrm{conv}}
\DeclareMathOperator{\Ker}{\mathrm{Ker}}
\newcommand{\res}{\mathrm{Res}}
\newcommand{\iind}{\mathrm{Ind}}
\newcommand{\coind}{\mathrm{Coind}}
\newcommand{\trace}{\mathrm{Trace}}
\pgfplotsset{compat=1.16}
\begin{document}

\begin{frontmatter}
\title{Summand-injectivity of interval covers and \\ monotonicity of interval resolution global dimensions}

\author[1]{Toshitaka Aoki}
\ead{toshitaka.aoki@people.kobe-u.ac.jp}
 
\author[1]{Emerson G.\ Escolar\corref{cor1}}
\cortext[cor1]{Corresponding author}
\ead{e.g.escolar@people.kobe-u.ac.jp}

\author[1]{Shunsuke Tada} 
\ead{205d851d@stu.kobe-u.ac.jp}

\affiliation[1]{organization={Graduate School of Human Development and Environment, Kobe University}, 
  addressline={3-11 Tsurukabuto, Nada-ku},
  postcode={657-8501}, 
  city={Kobe}, 
  country={Japan}
  }


\begin{abstract} 
Recently, there is growing interest in the use of relative homological algebra to develop invariants using interval covers and interval resolutions (i.e., right minimal approximations and resolutions relative to interval-decomposable modules) for multi-parameter persistence modules.
In this paper, the set of all interval modules over a given poset plays a central role. 
Firstly, we show that the restriction of interval covers of modules to each indecomposable direct summand is injective. 
This result suggests a way to simplify the computation of interval covers.
Secondly, we show the monotonicity of the interval resolution global dimension, i.e., if $Q$ is a full subposet of $P$, then the interval resolution global dimension of $Q$ is not larger than that of $P$. 
Finally, we provide a complete classification of posets whose interval resolution global dimension is zero. 
\end{abstract}

\begin{keyword}
  persistence modules \sep incidence algebras \sep interval modules \sep relative homological algebra
  \MSC[2020] 16G20 \sep 55N31 \sep 18G25\sep 16E05
 \end{keyword}
\end{frontmatter}

\section{Introduction}

Topological data analysis is a rapidly growing field applying the ideas of algebraic topology for data analysis.
One of its main tools is persistent homology \cite{frosini1999size,landi1997new,robins1999towards,carlsson2005computing}, which can compactly summarize the birth and death parameters (persistence intervals) of topological features (e.g.\ connected components, rings, cavities, and so on) of data via the persistence diagram.
This allows us to analyze hidden structures in data.
For example, in the field of material science, the analysis unveiled a hierarchical ring structure in silica glass \cite{hiraoka2016hierarchical}.
There are a lot of other applications,
including in the field of evolutionary biology  \cite{evolution},
cosmic web \cite{sousbie1_2011persistent,sousbie2_2011persistent},
and so on \cite{mcguirl2020topological,Jaquette_2020,aktas2019persistence,belchi2018lung}.   
Interested readers may refer to \cite{carlsson_vejdemo-johansson_2021,rabadan_blumberg_2019} for more details.

Algebraically, one part of the persistent homology analysis can be formalized
by using the so-called one-parameter persistence modules,
which are just (``pointwise'') finite dimensional modules over the incidence algebra of a totally ordered set.
In this point of view, one-parameter persistence modules are guaranteed to decompose into the \emph{interval modules} \cite{botnan2020decomposition}, which provides a multiset of intervals, called the persistence diagram.

As a generalization, multi-parameter persistence modules have been proposed \cite{carlsson2009theory},
which are understood as representations of $n$-dimensional grids,
and are expected to provide richer information compared to the one-parameter setting.
However,
unlike one-parameter persistence modules, there is no complete discrete invariant to capture all the
indecomposable modules \cite{carlsson2009theory}.
Furthermore, when dealing with large grids, the incidence algebra is known to be of wild representation type \cite{leszczynski1994representation,leszczynski2000tame} (see also \cite{bauer2020cotorsion}).
Moreover, there are computational difficulties when working with multi-parameter persistence modules \cite{bjerkevik2020computing}.
Thus applying similar techniques as the one-parameter case is theoretically and computationally challenging.

Recently, there is an interest in the use of relative homological algebra
in persistence theory
\cite{Botnan2021SignedBF,blanchette2021homological,chacholski2023koszul,asashibakoszul2023,asashiba2023approximation,blanchette2023exact}.
Especially, the notion of interval covers and interval resolutions (i.e. right minimal approximations and resolutions by interval-decomposable modules respectively) are developed, and the finiteness of the interval resolution global dimension has been confirmed \cite{asashiba2023approximation}.

An aim of this paper is to study the properties of interval covers and interval resolutions by interval-decomposable modules. 
Firstly, we show the following result, which asserts that such an approximation is given by a family of interval submodules. 

\begin{theorem}[Corollary \ref{cor:interval cover}]\label{main result1}
    Let $P$ be a finite poset and $k[P]$ the incidence algebra of $P$. 
    For a given $k[P]$-module $M$, we take its interval cover
    $f\colon X=\bigoplus_{i=1}^m X_i \to M$ (where all the $X_i$'s are interval modules). Then, the following holds.
    \begin{enumerate}[\rm (1)]
        \item $f$ is surjective. 
        \item $f|_{X_i}\colon X_i\to M$ is injective for every $i\in \{1,\ldots,m\}$. 
        \item The supports of $M$ and $X$ coincide. 
    \end{enumerate}    
    In particular, every $X_i$ can be taken as an interval submodule of $M$.
\end{theorem}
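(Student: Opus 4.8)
The plan is to exploit minimality of the interval cover together with the very explicit structure of interval modules over an incidence algebra. For (1), recall that $M$ always admits *some* surjection from an interval-decomposable module: for instance, for each point $p$ in the support of $M$ one can find an interval $I_p$ containing $p$ such that the interval module $k_{I_p}$ maps nontrivially to $M$ hitting a basis vector at $p$ (take $I_p$ to be, say, a sufficiently small interval around $p$ on which a chosen local section of $M$ extends to a map of representations; at worst the one-point interval $\{p\}$ works). Summing these over all $p$ gives a surjection $g\colon Y \to M$ with $Y$ interval-decomposable. Since $f\colon X\to M$ is a right $\mathcal{I}$-approximation (where $\mathcal{I}$ is the class of interval-decomposable modules), $g$ factors through $f$, so $\operatorname{Im} f \supseteq \operatorname{Im} g = M$; hence $f$ is surjective.

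For (2), I would argue by contradiction using right-minimality. Suppose $f|_{X_i}$ is not injective for some $i$, so $K_i := \ker(f|_{X_i})$ is a nonzero submodule of the interval module $X_i$. The key structural input is that a nonzero submodule of an interval module $k_I$ is again "large" in a suitable sense — in particular $X_i/K_i$ is a proper quotient — but more usefully, one wants to replace $X_i$ by a smaller interval module and still have an $\mathcal{I}$-approximation, contradicting minimality of the number of summands or of the total dimension. Concretely: because $f$ restricted to $X_i$ is not injective, the composite $X_i \hookrightarrow X \xrightarrow{f} M$ has image isomorphic to $X_i/K_i$, which is a quotient of the interval module $X_i$; if this quotient is still interval-decomposable and strictly smaller, then swapping the summand $X_i$ for (an interval cover of) $X_i/K_i$ produces an interval-decomposable module with a surjection onto $\operatorname{Im}(f|_{X_i})$ and strictly smaller dimension, and reassembling with the other summands contradicts that $f$ was a *minimal* such approximation. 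The cleanest route is to invoke right-minimality directly: for a right minimal morphism $f$, any endomorphism $h$ of $X$ with $f\circ h = f$ is an automorphism; I would construct from the non-injectivity of $f|_{X_i}$ a non-invertible such $h$ (roughly, $h$ restricted to $X_i$ factoring through $K_i$), yielding the contradiction. Then $f|_{X_i}$ injective means $X_i \cong \operatorname{Im}(f|_{X_i}) \subseteq M$ is an interval submodule of $M$, which is the "in particular" clause.

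For (3), note $\operatorname{supp}(X) = \bigcup_i \operatorname{supp}(X_i) \subseteq \operatorname{supp}(M)$ is immediate since each $X_i$ embeds into $M$ by part (2). Conversely $\operatorname{supp}(M) \subseteq \operatorname{supp}(X)$ follows from surjectivity in part (1): if $M_p \neq 0$ then $X_p \twoheadrightarrow M_p$ forces $X_p \neq 0$. So the two supports coincide.

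I expect the main obstacle to be part (2): making the minimality argument fully rigorous requires knowing precisely how submodules and quotients of interval modules over $k[P]$ behave — in particular that one can always "shrink" $X_i$ to an interval submodule matching the image $\operatorname{Im}(f|_{X_i})$ without leaving the class of interval-decomposables and without increasing dimension. This likely relies on an earlier structural lemma (the "summand-injectivity of interval covers" alluded to in the abstract), and the delicate point is handling the case where several summands $X_i$ have overlapping supports, so that a change in one summand might force compensating changes elsewhere; the endomorphism-of-$X$ formulation of right-minimality is what lets one localize the argument to a single $X_i$ and avoid this bookkeeping.
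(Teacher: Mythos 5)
Your overall strategy is the one the paper uses: part (1) from the existence of a surjection by interval-decomposables plus the approximation property, part (2) from right-minimality via a non-invertible endomorphism, part (3) as a formal consequence of (1) and (2). (The paper packages (2) and (3) as a general statement, Theorem \ref{thm:resolusion-quotient}, about any additively closed subcategory closed under quotients of indecomposable modules, and then specializes.) However, two points in your sketch do not yet work as written.

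First, in (1), the one-point interval $\{p\}$ does \emph{not} always admit a nonzero map to $M$ hitting a prescribed vector $v\in M_p$: a morphism $k_{\{p\}}\to M$ must land in the vectors killed by all $M(p\leq q)$ with $q>p$. The correct choice is the principal up-set $U_p=\{q\mid q\geq p\}$, since $k_{U_p}\cong e_pk[P]$ is the indecomposable projective at $p$ and $e_p\mapsto v$ defines a morphism; this is exactly the paper's observation that $\mathscr{I}_P$ contains all indecomposable projectives, whence the approximation is surjective. Second, and more importantly, in (2) the construction of the endomorphism $h$ with $fh=f$ is exactly where the real content lies, and you have left it at ``$h|_{X_i}$ factoring through $K_i$'' (it should factor through $X_i/K_i\cong\Image(f|_{X_i})$, not through the kernel). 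To build $h$ you must (a) know that $\Image(f|_{X_i})$, being a quotient of the interval module $X_i$, is again interval-decomposable — this is the closure-under-quotients property you correctly flag as ``an earlier structural lemma'' (Proposition \ref{interval-subquot}, from \cite[Lemma 4.4]{asashiba2023approximation}) but do not supply — and then (b) use the approximation property of $f$ to lift the inclusion $\Image(f|_{X_i})\hookrightarrow M$ to a map $g_i\colon\Image(f|_{X_i})\to X$; setting $h$ to be $g_i\circ(\text{the corestriction of }f|_{X_i})$ on $X_i$ and the inclusion on the other summands gives $fh=f$ with $h$ killing $K_i\neq 0$, contradicting right-minimality. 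Without step (a) the lift in (b) is unavailable and the argument does not close; your worry about ``overlapping supports'' is a red herring, since the endomorphism formulation handles all summands at once (or one at a time, as above) with no bookkeeping.
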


A similar observation has been discussed in \cite[Proposition 6.7]{blanchette2021homological}.
We note that Theorem~\ref{main result1} is essentially the same as \cite[Proposition 4.8]{asashibakoszul2023}, which was shown independently. See Remark~\ref{rem:asashiba}.
The importance of Theorem \ref{main result1} is that it 
provides one way to reduce the computational burden for computing interval resolutions:
instead of the original poset $P$ one can compute over smaller and smaller posets starting from the support of $M$.
We provide details
in Example \ref{Example:resolutiondimension}.

Secondly, we study the relationship between the interval resolution global dimensions 
of different posets.
We show the following.
\begin{theorem}[Theorem \ref{thm:monotone subposet}] \label{main result2}
    Let $P$ be a finite poset and $k[P]$ the incidence algebra of $P$. 
    For any full subposet $Q$ of $P$, the interval resolution global dimension of $k[Q]$ is smaller than or equal to that of $k[P]$.
\end{theorem}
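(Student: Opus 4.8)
The plan is to use the restriction and (co)induction functors between $\mod k[P]$ and $\mod k[Q]$ coming from the inclusion $\iota \colon Q \hookrightarrow P$ of a full subposet, and to show that they interact well with interval modules and with resolutions. Write $\res = \res^P_Q \colon \mod k[P] \to \mod k[Q]$ for restriction. Since $Q$ is a \emph{full} subposet, restriction sends interval modules over $P$ to interval modules over $Q$ \emph{or to zero}: if $I$ is an interval of $P$, then $I \cap Q$ is either empty or again an interval (convex and connected) of $Q$, and $\res(k_I) = k_{I\cap Q}$ (with the convention $k_\emptyset = 0$). The first step is to record this, together with the fact that $\res$ is exact and additive, so it sends any interval resolution over $P$ to a complex of interval-decomposable $k[Q]$-modules that is exact.

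The second step is to go the other way: given a $k[Q]$-module $N$, I want to produce a $k[P]$-module restricting to $N$ so that an interval resolution over $P$ restricts to an interval resolution of $N$ over $Q$. The natural candidate is the left Kan extension (induction) $\iind = \iind^P_Q$ along $\iota$, which for incidence algebras of posets is right exact and satisfies $\res \circ \iind \iso \id_{\mod k[Q]}$ because $\iota$ is fully faithful. The key point to verify is that $\iind$ sends interval modules over $Q$ to interval-\emph{decomposable} modules over $P$; concretely, for an interval $J$ of $Q$ one computes $\iind(k_J)$ as the representation whose value at $p \in P$ is the vector space with basis the connected components of $\{q \in J : q \le p\}$ (or the down-set description appropriate to the chosen convention), and one checks this decomposes as a direct sum of interval modules $k_{I}$ over $P$. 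Granting this, take a $k[Q]$-module $N$ with $\intresdim$ equal to $d := \intresgldim k[Q]$, choose a minimal interval resolution of length $d$, apply $\iind$ to get a (not necessarily minimal, but still exact once we check $\iind$ is exact on this particular resolution, or replace it by a projective-type argument) interval-decomposable resolution of $\iind(N)$ over $P$, so $\intresdim \iind(N) \le d' := \intresgldim k[P]$; then restrict back via $\res$, using $\res\iind N \iso N$ and exactness of $\res$ on interval resolutions, to recover an interval resolution of $N$ of length $\le d'$. Hence $d \le d'$.

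The main obstacle is the asymmetry in exactness: $\iind$ is only right exact, so applying it termwise to an interval resolution over $Q$ need not yield an exact complex over $P$, and dually the naive argument ``lift, resolve, restrict'' can fail at this step. I expect the cleanest fix is to avoid lifting an entire resolution: instead, lift only the module $N$ to $\iind(N)$, take a \emph{minimal} interval resolution of $\iind(N)$ over $P$ (which exists and has length $\le d'$ by definition of $\intresgldim k[P]$), and then apply the exact functor $\res$ to that resolution. This produces an exact complex of interval-decomposable $k[Q]$-modules ending in $\res\iind(N) \iso N$, i.e.\ an interval resolution of $N$ of length $\le d'$, which gives $\intresdim_{k[Q]} N \le d'$; taking the supremum over $N$ yields the theorem. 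The remaining technical content is therefore exactly the two functoriality lemmas: (i) $\res$ is exact and carries interval modules to interval modules or $0$ (immediate from fullness of $Q$), and (ii) $\iind$ carries interval modules over $Q$ to interval-decomposable modules over $P$ and satisfies $\res\iind \iso \id$ — this second lemma is where I would spend the real work, via the explicit combinatorial description of $\iind(k_J)$ on the poset $P$.
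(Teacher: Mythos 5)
Your overall strategy (an exact restriction functor $\res$ paired with a lifting functor satisfying $\res\circ(\text{lift})\cong 1$, both interacting with interval modules) is the same as the paper's, but both of the steps you single out as ``the real work'' break down. First, the claim that $\iind$ carries interval modules over $Q$ to interval-decomposable modules over $P$ is false: the paper exhibits a poset $P$ with Hasse diagram $2\to 3\to 1$, $4\to 3\to 1$ and the full subposet $Q=\{1,2,4\}$, for which $\iind(k_Q)$ is indecomposable with a $2$-dimensional space at the vertex $3$, hence not interval. This failure is precisely why the paper replaces $\iind$ by the functor $\Theta(M):=\Image\bigl(\theta_M\colon \iind(M)\to\coind(M)\bigr)$, which does send $k_I$ to the interval module $k_{\conv(I)}$ and still satisfies $\res\circ\Theta\cong 1$. (A minor separate slip: for an interval $I$ of $P$, the set $I\cap Q$ is convex but need not be connected, so $\res(k_I)$ is a direct sum of several interval modules rather than a single one or zero; this does not affect interval-decomposability.)

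Second, your fallback — take a minimal interval resolution of $\iind(N)$ over $P$ and apply the exact functor $\res$ — does produce an exact complex of interval-decomposable $k[Q]$-modules ending in $N$, but in relative homological algebra such a complex is not automatically an interval resolution and does not automatically bound $\intresdim N$. One needs the differentials to be right $(\add\mathscr{I}_Q)$-approximations of their images (equivalently, the complex must stay exact under $\Hom_{k[Q]}(k_I,-)$ for intervals $I$ of $Q$); Schanuel-type comparison with the minimal resolution fails for a merely exact complex with terms in $\add\mathscr{I}_Q$. Restriction does not preserve the approximation property: to factor a map $k_I\to\res(X)$ through $\res(J)\to\res(X)$ one would, via the adjunction, have to factor $\iind(k_I)\to X$ through the approximation $J\to X$ over $P$, which requires $\iind(k_I)\in\add\mathscr{I}_P$ — exactly the property that fails. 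The paper circumvents this by a syzygy-by-syzygy argument (Condition~\ref{conditionast} and Lemma~\ref{sectionpair}): starting from the pair $(\Theta(N),N)$ it shows that $\Omega^m_{\mathscr{I}_Q}N$ is always a direct summand of $\res\bigl(\Omega^m_{\mathscr{I}_P}\Theta(N)\bigr)$, and this step uses that \emph{both} functors preserve interval-decomposability. To repair your proof you would need either to prove that an exact complex with interval-decomposable terms always bounds the interval resolution dimension (not established here), or to replace $\iind$ by $\Theta$ and run the syzygy comparison.
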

The main ingredient for its proof is the functor $\Theta \colon \mod k[Q] \to \mod k[P]$ 
called the \emph{intermediate extension} \cite{kuhn1994generic} (the \emph{prolongement interm\'ediare} in \cite{BBD}) between the module categories, which is defined by using idempotent embeddings (see Section \ref{sec:idempotent}).
We prove that $\Theta$ realizes a combinatorial operation called the \emph{convex hull}
in the level of module categories (Lemma \ref{Theta->int}). 
As a consequence, we obtain a pair of functors 
\begin{equation*}
    \xymatrix{
    \mod k[P] \ar@{->}@/_3mm/[rr]^{\res} & & \mod k[Q] \ar@{->}@/_3mm/[ll]_{\rm \Theta}. 
    }
\end{equation*}
such that $\res$ and $\Theta$ preserve interval-decomposability of modules and satisfy $\res\circ \Theta\cong 1_{\mod k[Q]}$. 
Then, we can directly compare interval resolutions via these functors and obtain the
monotonicity of interval resolution global dimension with respect to (full subposet) inclusion.
It is interesting because it does not hold for the usual global dimension in general \cite{igusa1990cohomology}.
We also note that it does not hold if we drop the condition that $Q$ be a \emph{full} subposet of $P$ (Remark~\ref{rem:nonfull}).

Finally, we give a complete classification of finite posets whose interval resolution global dimension is zero.
It generalizes the setting of one-parameter persistence modules. 

\begin{theorem}[Theorem \ref{thm:gldim=0}] \label{main result3}
Let $P$ be a finite poset and $k[P]$ the incidence algebra of $P$. 
Then, the following conditions are equivalent. 
\begin{enumerate}[\rm (a)]
    \item The interval resolution global dimension of $P$ is zero (equivalently, all modules are interval-decomposable). 
    \item Each connected component of the Hasse diagram of $P$ is one of $A_n(a)$ for some  orientation $a$ or $C_{m,\ell}$ displayed below, where the symbol $\leftrightarrow$ is either $\rightarrow$ or $\leftarrow$ assigned by its orientation $a$: 
    \begin{eqnarray*}
    &A_n(a)\colon& \quad \quad \quad  1 \longleftrightarrow 2  \longleftrightarrow \cdots \cdots \longleftrightarrow n, \\ \\ 
    &C_{m,\ell}\colon&\begin{tikzpicture}[baseline=0mm]
        \coordinate (x) at (1,0);
        \coordinate (y) at (0,0.6);
        \node (0h) at (0,0) {$\hat{0}$};
        \node (1h) at (6,0) {$\hat{1}$};
        \node (1) at ($1*(x) + (y)$) {$1$};
        \node (2) at ($2*(x) + (y)$) {$2$};
        \node (3) at ($3*(x) + (y)$) {$3$};
        \node (m) at ($5*(x) + (y)$) {$m$};
        \node (1p) at ($1*(x) + -1*(y)$) {$1'$};
        \node (2p) at ($2*(x) + -1*(y)$) {$2'$};
        \node (3p) at ($3*(x) + -1*(y)$) {$3'$};
        \node (l) at ($5*(x) + -1*(y)$) {$\ell'$};
        \draw[->] (0h)--(1);
        \draw[->] (0h)--(1p);
        \draw[->] (m)--(1h);
        \draw[->] (l)--(1h);
        
        \draw[->] (1)--(2);
        \draw[->] (2)--(3);
        \draw[->] (3)--(m);
        \node[fill=white] at ($(3)!0.5!(m)$) {$\cdots$}; 
        \draw[->] (1p)--(2p);
        \draw[->] (2p)--(3p);
        \draw[->] (3p)--(l);
        \node[fill=white] at ($(3p)!0.5!(l)$) {$\cdots$}; 
    \end{tikzpicture}
\end{eqnarray*} 
\end{enumerate}
In particular, these conditions do not depend on the characteristic of the base field $k$.
\end{theorem}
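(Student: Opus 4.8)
The plan is to prove (a) $\Leftrightarrow$ (b), using throughout the equivalence recorded inside (a) that the interval resolution global dimension of $P$ vanishes if and only if every indecomposable $k[P]$-module is an interval module; we may also assume $P$ is connected, since $k[P]$ is the product of the incidence algebras of the connected components of $P$, an indecomposable is supported on one component, and the Hasse diagram decomposes accordingly. The implication (b) $\Rightarrow$ (a) is immediate in the case $A_n(a)$: the Hasse quiver is then an orientation of a path, which is a tree, so it carries no commutativity relations and $k[P]=kQ$ is the hereditary path algebra of Dynkin type $A_n$; by Gabriel's theorem its indecomposables are the thin representations supported on connected subquivers, each such subquiver is a subpath, and a subpath is a connected convex subset of $P$, so these indecomposables are precisely the interval modules.

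For (a) $\Rightarrow$ (b) I would argue contrapositively: assuming the connected Hasse diagram of $P$ is neither of type $A_n(a)$ nor of type $C_{m,\ell}$, I produce a non-interval indecomposable. If some vertex $v$ of the Hasse diagram has three neighbours $a,b,c$, a short case check (according to whether each of $a,b,c$ lies above or below $v$) shows that the full subposet $Q=\{v,a,b,c\}$ has no order relations among $a,b,c$ and retains the three covering relations at $v$, so its Hasse diagram is the three-star and $k[Q]$ is the hereditary path algebra of Dynkin type $D_4$; by Gabriel's theorem it has an indecomposable whose dimension vector is the highest root (value $2$ at the central vertex), which is not thin and hence not an interval module, so the interval resolution global dimension of $k[Q]$ is positive and, by the monotonicity theorem (Theorem~\ref{thm:monotone subposet}), so is that of $k[P]$. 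Hence we may assume the Hasse diagram has maximal degree at most $2$; being connected, it is a path or a cycle. A path gives type $A_n(a)$. A cycle cannot be cyclically oriented (antisymmetry), so it has at least one source and one sink; if it has at least two sources then no two vertices of the cycle are joined by two distinct directed paths, so there are no commutativity relations and $k[P]=kQ$ is the path algebra of a cycle with non-cyclic orientation, i.e.\ of extended Dynkin type $\widetilde A$, which is of infinite representation type and so has non-thin indecomposables. The remaining case is a cycle with exactly one source $\hat{0}$ and one sink $\hat{1}$; following the two directed arcs between them — and noting that neither arc can have length one, as the other arc would then pass strictly between $\hat{0}$ and $\hat{1}$, contradicting that the short arc is a covering relation — one sees such a poset is exactly $C_{m,\ell}$ with $m,\ell\ge 1$. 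This exhausts the cases.

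It remains to establish (b) $\Rightarrow$ (a) for $C_{m,\ell}$, which I expect to be the main obstacle: one must show that this non-hereditary incidence algebra $k[C_{m,\ell}]=kQ/I$, with $Q$ the cycle glued from the arcs $\hat{0}\to 1\to\cdots\to m\to\hat{1}$ and $\hat{0}\to 1'\to\cdots\to\ell'\to\hat{1}$ and $I$ generated by the single commutativity relation identifying the two arc-paths, is representation-finite and, more delicately, that none of its indecomposables has dimension greater than one at any vertex or is supported on a non-convex subset. I would prove this by a direct, inductive analysis of the representations: such a representation is a pair of composable sequences of linear maps along the arcs whose total composites $V_{\hat{0}}\to V_{\hat{1}}$ agree, and one peels off thin interval summands one at a time — the commutativity relation being exactly what forces the pieces living on the two arcs to be synchronised at $\hat{0}$ and at $\hat{1}$ — with the induction on, say, $\dim V$, until nothing remains; an alternative is to realise $k[C_{m,\ell}]$ as a one-point coextension of the type-$A_{m+\ell+1}$ path algebra $k[C_{m,\ell}\setminus\{\hat{1}\}]$ by its unique sincere indecomposable and to track the indecomposables through the coextension, verifying that thinness and convexity of supports are preserved. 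Either way the indecomposables of $k[C_{m,\ell}]$ turn out to be precisely the interval modules, giving (a). Finally, every tool used above — Gabriel's theorem, the combinatorics of Hasse diagrams, the infinite type of $\widetilde A$, and the representation analysis of $C_{m,\ell}$ — is insensitive to the base field, which yields the last assertion.
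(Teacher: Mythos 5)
Your argument for (a)~$\Rightarrow$~(b), including the reduction to maximum degree~$\leq 2$ via a $D_4$ full subposet, Gabriel's theorem, and the monotonicity Theorem~\ref{thm:monotone subposet}, and then the analysis of cycles (no cyclic orientations, $\geq 2$ sources forces a relation-free path algebra of type $\tilde A$ of infinite type, exactly one source and one sink forces $C_{m,\ell}$ with $m,\ell\geq 1$), matches the paper's proof essentially line by line, and you are slightly more careful than the paper about why both arcs of the cycle have length $\geq 2$. One small imprecision: when $v$ has a neighbour above and a neighbour below, the full subposet $\{v,a,b,c\}$ \emph{does} have order relations among $a,b,c$; what is true and suffices is that every such relation factors through $v$, so the Hasse diagram is still the three-star and $k[Q]$ is the hereditary $D_4$ path algebra.

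The genuine gap is exactly where you flag it: the implication (b)~$\Rightarrow$~(a) for $C_{m,\ell}$ is asserted but not proved. You sketch two possible routes (a hands-on inductive ``peeling off'' of interval summands, or a one-point coextension of the underlying type-$A$ path algebra) and then conclude ``either way the indecomposables of $k[C_{m,\ell}]$ turn out to be precisely the interval modules.'' Neither route is carried out, and neither is routine: the commutativity relation couples the two arcs, so the peeling induction needs a careful argument that an interval direct summand always splits off compatibly at $\hat 0$ and $\hat 1$, and the coextension route requires one to track indecomposables through a non-hereditary extension and verify thinness and convexity of supports for all of them. The paper closes this gap by a different and cleaner method: it observes (\S\ref{subsec:SBA} and Propositions~\ref{prop:st-ba}, \ref{prop:st-int}, Corollary~\ref{cor:Cml}) that $k[C_{m,\ell}]$ is a special biserial algebra whose only projective-injective non-uniserial module is $k_{C_{m,\ell}}$; the associated string algebra $kQ_{m,\ell}/\bar I$ has no bands (so the algebra is representation-finite) and every string has multiplicity-free support equal to an interval, so every string module is an interval module; together with $k_{C_{m,\ell}}$ this accounts for all indecomposables. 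If you want to avoid invoking the Butler--Ringel classification for special biserial algebras, your peeling idea can likely be made to work, but it must be written out; as it stands this step is a promissory note rather than a proof.
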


To prove this, we use knowledge of special biserial algebras \cite{WW,SW,BR87}.

This paper is organized as follows. In Section \ref{sec2}, we recall basic terminology of relative homological algebra. 
We also discuss interval modules and interval resolution dimensions over incidence algebras of finite posets.
In Section \ref{sec3}, we study approximations and resolutions relative to full subcategories which are closed under quotients  of indecomposable modules and prove Theorem \ref{thm:resolusion-quotient}.
We will apply this result to the class of interval modules and prove Theorem \ref{main result1} (Section \ref{sec:convex}).
In Section \ref{sec4}, we study interval resolution dimensions for full subposets. In Section \ref{sec:idempotent}, we define the functor $\Theta$ by using idempotent embedding functors. Using this, we prove Theorem \ref{main result2}.
In Section \ref{sec5}, we prove Theorem \ref{main result3}.


\section{Preliminaries}\label{sec2}
In this section, we recall the basics of the representation theory of finite dimensional algebras.
We refer to \cite{ASS,ARS} for definitions and fundamental results. 
Let $A$ be a finite dimensional algebra over a field $k$. We denote by $\mod A$ the category of finitely generated right $A$-modules. 
Throughout this paper, we assume that all modules are finitely generated. For morphisms $f\colon X\to Y$ and $g\colon Y\to Z$ of $A$-modules, we denote their composition by $gf\colon X\to Z$.

\subsection{Approximations and resolutions}
We recall the basic terminology of relative homological algebra.
We follow the works \cite{blanchette2021homological,asashiba2023approximation}
also applying relative homological algebra in persistence theory, but
change the notation slightly for our purposes.

\begin{definition}
Let $\mathcal{X}$ be a full subcategory of $\mod A$. 
For a morphism $f\colon X\to M$ of $A$-modules, we say that
\begin{enumerate}[\rm (1)]
    \item $f$ is \emph{right minimal} if any morphism $g: X \to X$ satisfying $fg=f$ is an isomorphism.
    \item $f$ is a \emph{right $\mathcal{X}$-approximation} of $M$
    if $X \in \mathcal{X}$ and $\Hom_{A} (Y, f)$ is surjective for any $Y \in \mathcal{X}$.
    \item $f$ is a \emph{right minimal $\mathcal{X}$-approximation} of $M$ if it is a right $\mathcal{X}$-approximation which is right minimal.
\end{enumerate} 
\end{definition}

In this paper, we mainly consider approximations 
by the full subcategory $\mathcal{X} := \add \mathscr{X}$ for a fixed finite collection $\mathscr{X}$ of (isomorphism classes of) indecomposable $A$-modules.
Here, $\add \mathscr{X}$ denotes the smallest full subcategory of $\mod A$ which contains $\mathscr{X}$ and is closed under taking isomorphisms, direct sums and direct summands. 
In this situation, every $A$-module $M$ admits a right minimal $(\add \mathscr{X})$-approximation which is uniquely determined up to isomorphism.
Furthermore, it is surjective when $\mathscr{X}$ contains all indecomposable projective $A$-modules. 

Now, we assume that $\mathscr{X}$ contains all indecomposable projective $A$-modules. 
For an $A$-module $M$, we take a right minimal $(\add \mathscr{X})$-approximation $f\colon J\to M$. 
Then, it induces a short exact sequence
\begin{equation}\nonumber
    0 \longrightarrow \Ker(f) \overset{\iota}{\longrightarrow} J \overset{f}{\longrightarrow} M \longrightarrow 0, 
\end{equation}
where $\iota$ is the kernel of $f$. 
We set $\Omega_{\mathscr{X}}(M) :=\Ker(f)$ and call it \emph{$\mathscr{X}$-syzygy} of $M$. 
Notice that it is uniquely determined up to isomorphism.
In addition, we set $\Omega_{\mathscr{X}}^0(M):=M$ and $\Omega^i_{\mathscr{X}}(M):=\Omega_{\mathscr{X}}(\Omega_{\mathscr{X}}^{i-1}(M))$ for all $i>0$. Under these notations, we obtain a diagram 
\begin{equation}\label{resolution of M}
        \xymatrix@C=15pt{&\ar@{.}[r] & \ar[r]& J_2 \ar[rr]^{g_2}\ar@{.>}[rd]_{f_2}&&  J_1\ar[rr]^{g_1}\ar@{.>}[rd]_{f_1}&& J \ar[rr]^{f} && M \ar[rr] && 0 \\
        &&&&\ar@{}[u]|{\circlearrowright}\Omega_{\mathscr{X}}^2(M)\ar@{.>}[ur]_{\iota_1} && \ar@{}[u]|{\circlearrowright}\Omega_{\mathscr{X}}(M)\ar@{.>}[ur]_{\iota}}
\end{equation}
where $f_i\colon J_i \to \Omega_{\mathscr{X}}^i(M)$ is a right minimal $(\add \mathscr{X})$-approximation with $J_i\in \add \mathscr{X}$ for every $i>0$, and $g_i$ is the composition of $f_i$ and the kernel $\iota_{i-1}$ of $f_{i-1}$ for every $i>0$, where $\iota_0:=\iota$.
An exact sequence \eqref{resolution of M} is called a \emph{right minimal $\mathscr{X}$-resolution} of $M$, which is uniquely determined up to isomorphism. 

\begin{definition}
If $M$ has a right minimal $\mathscr{X}$-resolution of the form
\begin{equation}\nonumber
    0 \longrightarrow J_m \overset{g_m}{\longrightarrow}
    \cdots \overset{g_2}{\longrightarrow} J_1\overset{g_1}{\longrightarrow} J_0 \overset{f}{\longrightarrow} M \longrightarrow 0,
\end{equation}
then we say that the \emph{$\mathscr{X}$-resolution dimension} of $M$ is $m$ and write $\mathscr{X}\resdim M = m$.
Otherwise, we say that the $\mathscr{X}$-resolution dimension of $M$ is infinity.

We set
\begin{equation}
    \mathscr{X}\resgldim A := \sup \{\mathscr{X}\resdim M \mid M \in \mod A\}
\end{equation}
and call \emph{$\mathscr{X}$-resolution global dimension} of $A$. Notice that it can be infinity. 
\end{definition}

\begin{remark}
    We define the resolution dimension by using right minimal approximations. 
    Equivalently, the resolution dimension is defined as the infimum of the length of (not necessarily minimal) resolutions of a given module, see \cite[Proposition 3.9]{asashiba2023approximation}. 
\end{remark}

\subsection{Partially ordered sets}
\label{subsec:poset}
A \emph{poset} is a set $P$ equipped with a partial order $\leq$.
Let $P$ be a finite poset.
We denote by $\Hasse(P)$ the Hasse diagram of $P$, that is,
the set of vertices is $P$ and we draw
an arrow $x \to y$ for $x,y\in P$ if and only if $x < y$ and
there is no $z\in P$ such that $x < z < y$. In particular, $\Hasse(P)$ is acyclic.
The \emph{incidence algebra} $k[P]$ of $P$ is defined to be
the quotient of the path algebra of $\Hasse(P)$ over $k$
modulo the two-sided ideal generated by all the commutative relations.
In the rest of this paper, we assume that $P$ is \emph{connected} (i.e., $\Hasse(P)$ is a connected quiver) unless otherwise specified. In this case, the corresponding incidence algebra $k[P]$ is connected.

Let $P$ be a finite poset and $k[P]$ be the incidence algebra of $P$.
Then, the module category $\mod k[P]$ can be described in terms of a functor category as follows.
Firstly, we regard $P$ as a category whose objects are elements of $P$, and morphisms are defined by relations in $P$, i.e., there is a unique morphism $a \to b$ for $a,b\in P$ if and only if $a\leq b$. We denote by ${\sf rep}_k(P)$ the category of (covariant) functors from $P$ to the category of finite dimensional vector spaces over $k$. 
More explicitly, objects and morphisms of this category are given as follows: An object $V$ of ${\sf rep}_k(P)$ is a correspondence such that
\begin{enumerate}
    \item it associates an element $a\in P$ to a finite dimensional $k$-vector space $V_a$, and
    \item it associates a relation $a\leq b$ in $P$ to a $k$-linear map $V(a\leq b)$ from $V_a$ to $V_b$ in such a way that $V(a\leq a) = {\rm 1}_{V_a}$ and $V(b\leq c)\circ V(a\leq b) = V(a\leq c)$ for all $a,b,c\in P$.
\end{enumerate}
For $V$ in ${\sf rep}_k(P)$, the subset
\begin{equation}\label{suppV}
    \supp V := \{ a\in P \mid V_a \neq 0 \}
\end{equation}
is called the \emph{support} of $V$. For two objects $V,W\in {\sf rep}_k(P)$, a morphism from $V$ to $W$ is a family of $k$-linear maps $f_a : V_a \to W_a$ with $a\in P$ satisfying $f_b \circ V(a \leq b) = W(a \leq b) \circ f_a$ for every relation $a \leq b$ in $\P$, that is, the following diagram is commutative.
\[
\xymatrix{
    V_a \ar[rr]^-{V(a \leq b)} \ar[d]^{f_a}
    & &V_b \ar[d] ^{f_b}
    \\
    W_a \ar[rr]^-{W(a \leq b)}
    & \ar@{}[u]|{\circlearrowright} &W_b
}
\]

It is well-known that there is an equivalence of abelian categories between ${\sf rep}_k(P)$ and the module category $\mod k[\P]$ of the incidence algebra of $\P$. 
In this sense, we identify objects $V$ of $\rep_k(P)$ and $k[\P]$-modules. From \eqref{suppV}, the \emph{support} of a $k[P]$-module $M$ is the subset 
\begin{equation}
    \supp(M)= \{a\in P \mid Me_a\neq 0 \},
\end{equation}
where $e_a$ is a primitive idempotent of $k[P]$ corresponding to the element $a\in P$.

\begin{definition}\label{def:conv}
    A \emph{full subposet} of $\P$ is a subset $P' \subseteq P$ equipped with the induced partial order. Notice that it is completely determined by its elements. 
    We say that 
    \begin{enumerate}[\rm (1)]
        \item $P'$ is \emph{convex} in $P$ if, for any $x,y\in P'$ and any $z\in P$, $x < z < y$ implies $z\in P'$,
        \item $P'$ is an \emph{interval} of $P$ if $P'$ is connected as a poset and is convex in $P$.
    \end{enumerate}
We denote by $\mathbb{I}(P)$ the set of intervals of $P$.
\end{definition}

The following special class of modules plays an important role in this paper.

\begin{definition}\label{def:intervalmod}
    For an interval $I$ of $P$, let $k_I$ be a $k[P]$-module given as follows.
    \begin{equation}
        (k_I)_a =
        \begin{cases}
        k & \text{if $a\in I$}, \\
        0 & \text{otherwise,}
        \end{cases}\quad \quad
        k_I(a\leq b) =
        \begin{cases}
        1_k & \text{if $a,b\in I$}, \\
        0 & \text{otherwise.}
        \end{cases}
    \end{equation}
    An \emph{interval module} is a $k[P]$-module $M$ such that $M\cong k_I$ for some interval $I\in \mathbb{I}(P)$. 
    Clearly, every interval module is indecomposable. 
\end{definition}

We denote by $\mathscr{I}_{k, P}$ the set of isomorphism classes of interval $k[P]$-modules, which is in bijection with $\mathbb{I}(P)$ by $I\mapsto k_I$. Notice that $\mathbb{I}_{P}$ and $\mathscr{I}_{P,k}$ are finite since so is $P$. 
Each module in $\add \mathscr{I}_{P,k}$ is said to be \emph{interval-decomposable}.
In other words, a given $k[P]$-module $M$ is interval-decomposable if and only if it can be written as 
\begin{equation}\nonumber
    M\cong \bigoplus_{I\in \mathbb{I}(P)} k_I^{m(I)}
\end{equation}
for some non-negative integers $m(I)$. 
We will write $\mathscr{I}_P$ instead of $\mathscr{I}_{k,P}$ when the base field $k$ is clear.

Since $\mathscr{I}_{P}$ contains all indecomposable projective $k[P]$-modules by definition, one can consider resolutions by interval modules. 
By \emph{interval covers over $P$} (resp., \emph{interval resolutions over $P$}), we mean
right minimal $(\add \mathscr{I}_P)$-approximations (resp., $\mathscr{I}_P$-resolutions) of $k[P]$-modules. Where the poset $P$ is clear, we may omit it.
In addition, we will write 
\begin{equation}\nonumber
    \intresdim M := \mathscr{I}_P\resdim M \quad \text{and}\quad 
    \intresgldim k[P] := \mathscr{I}_P\resgldim k[P], 
\end{equation}
and call them the \emph{interval resolution dimension} of a module $M$ and the \emph{interval resolution global dimension} of $k[P]$ respectively.
It has been shown in \cite[Proposition 4.5]{asashiba2023approximation} that the interval resolution global dimension is always finite (see also Theorem \ref{intgldim-finite}).
By definition, $k[P]$ has interval resolution global dimension zero if and only if every $k[P]$-module is interval-decomposable, if and only if every indecomposable $k[P]$-module is interval. In Section \ref{Section:Dim=0}, we will give a complete classification of such posets.

Before giving examples, we fix some notations.
Let $G$ be a simple undirected finite graph.
The directed graph whose underlying undirected graph is $G$ and has 
orientation\footnote{
Precisely speaking, an orientation can be formalized as a function that assigns to each undirected edge
$\{u,v\}$ in $G$ a direction $(u,v)$ or $(v,u)$. We do not need this 
formalism.}
$a$ will be denoted by $G(a)$.
If $G(a)$ is acyclic and has no arrows of the form
\begin{tikzpicture}[baseline=0mm]
    \node (a) at (0,0) {};
    \node (b) at (1,0) {};
    \coordinate (c) at (0.5,0.3) ;
    \draw[->] (a)--($(c)+(-0.05,0)$);
    \draw[->] (a)--(b);
    \draw[->] ($(c)+(0.05,0)$)--(b);
\end{tikzpicture}, then it is realized as the Hasse diagram
of a poset.
In fact, it is given by the poset whose elements are the vertices of $G$ and the partial order is generated by the arrows of $G$. 
In this way, we sometimes identify such a directed graph with a poset. 

For instance, we consider the Dynkin diagram of type $A_n$. Then, the $A_n$-type quiver $A_n(a)$ is of the form 
\begin{equation}
    1 \longleftrightarrow 2  \longleftrightarrow \cdots \cdots \longleftrightarrow n,
\end{equation}
where $\leftrightarrow$ is either $\rightarrow$ or $\leftarrow$ as assigned by the orientation $a$.
Important cases are
\begin{itemize}
    \item the \emph{equioriented} $A_n$-type quiver $A_n(e)$ given by
    \begin{equation}\nonumber
        1 \longrightarrow 2 \longrightarrow \cdots \cdots \longrightarrow n, 
    \end{equation}
  \item and a \emph{purely zigzag} $A_n$-type quiver $A_n(z)$ given by 
    the following (or its opposite):
    \begin{equation}\nonumber
        1 \longrightarrow 2 \longleftarrow \cdots \cdots \longleftrightarrow n
      \end{equation}
      where the last arrow is $\leftarrow$ (resp., $\rightarrow$) if $n$ is odd (resp., even).

\end{itemize} 
In addition, the $D_n$-type quiver $D_n(b)$ is of the form 
\begin{equation}\nonumber
    \begin{tikzpicture}
    \node (u) at (0,1) {$1$};
    \node (a) at(-1,0) {$2$}; 
    \node (c) at(0,0) {$3$};
        
    \node (b) at(1.5,0) {$\longleftrightarrow\cdots \cdots \longleftrightarrow n$};
    \draw[<->] (u)--(c); 
    \draw[<->] (a)--(c); 
    \end{tikzpicture} 
\end{equation}

\begin{example}\label{Example:Glintdim=1} 
Here, we compute interval resolution global dimension for a few examples. 
\begin{enumerate}[\rm (1)]
\item For any $A_n$-type quiver $A_n(a)$, Gabriel's theorem \cite{Gabriel72} asserts 
that every indecomposable module is an interval module. 
Therefore, the interval resolution global dimension for $A_n(a)$ is zero.  
\item Next, we consider the $D_4$-type quiver $D_4(b)$ displayed below: 
\begin{equation}\nonumber
    \begin{tikzpicture}
    \node (a) at (0,1) {$1$};
    \node (b) at(-1,0) {$2$}; 
    \node (c) at(0,0) {$3$};
    \node (d) at(1,0) {$4.$}; 
    \draw[->] (a)--(c); 
    \draw[->] (c)--(b); 
    \draw[->] (c)--(d);
    \end{tikzpicture} 
\end{equation}
Then, the incidence algebra is just a path algebra of type $D_4$.
The Auslander-Reiten quiver is given by
\[\begin{tikzcd}[ampersand replacement = \&, cramped, ]
 \begin{smallmatrix}
 & 0 &  \\
 1& 0 & 0  \\
\end{smallmatrix} \arrow[dr]   \&  \&  
\begin{smallmatrix}
 & 0 &  \\
 0& 1 & 1  \\
\end{smallmatrix} \arrow[rd,pos=0.5,"a_1"]
\& \&
\begin{smallmatrix}
 & 1 &  \\
 1& 1 & 0  \\
\end{smallmatrix} \arrow[rd]
\\
   \& \begin{smallmatrix}
 & 0 &  \\
 1& 1 & 1  \\
\end{smallmatrix} \arrow[r,pos=0.5,"b_2"]  \arrow[ru,pos=0.5,"b_1"] \arrow[rd,pos=0.5,"b_3"']  \& \begin{smallmatrix}
 & 1 &  \\
 1& 1 & 1   \\
\end{smallmatrix}  \arrow[r,pos=0.5,"a_2"] \& M \arrow[r]  \arrow[ru] \arrow[rd] \& \begin{smallmatrix}
 & 0 &  \\
 0& 1 & 0   \\
\end{smallmatrix} \arrow[r] \& \begin{smallmatrix}
 & 1 &  \\
 0 & 1 & 0  \\
\end{smallmatrix}  \arrow[r] \& \begin{smallmatrix}
 & 1 &  \\
 0 & 0 & 0  \\
\end{smallmatrix},
 \\
 \begin{smallmatrix}
 & 0 &  \\
 0& 0 & 1  \\
\end{smallmatrix} \arrow[ru]   \&   \& \begin{smallmatrix}
 & 0 &  \\
 1& 1 & 0  \\
\end{smallmatrix} \arrow[ru,pos=0.5,"a_3"'] \&  \&  \begin{smallmatrix}
 & 1 &  \\
 0& 1 & 1  \\
\end{smallmatrix}  \arrow[ru]
\end{tikzcd}
\]
where all indecomposable modules except for $M$ are interval, but $M$ is
\[
\xymatrix{  &  k\ar[d]^(0.3){{}^t[1\,1]} & \\ k  &
 \ar[l]_{[1\,0]}   k^2 \ar[r]^{[0\,1]}  & k.}
\]
Looking at the Auslander-Reiten quiver,
we find that a minimal interval resolution of $M$ is
\[ 0 \longrightarrow
\begin{smallmatrix}
 & 0 &  \\
1 & 1 & 1 \\
\end{smallmatrix}
\xrightarrow{{}^t[b_1,b_2,b_3]}
\begin{smallmatrix}
 & 0 &  \\
 0& 1 & 1  \\
\end{smallmatrix} \oplus
\begin{smallmatrix}
 & 1 &  \\
1 & 1 &  1 \\
\end{smallmatrix} \oplus
\begin{smallmatrix}
 & 0 & \\
 1& 1 &  0 \\
\end{smallmatrix}
\xrightarrow{[a_1,a_2,a_3]}
M
\longrightarrow 0, 
\]
and hence
\begin{equation}\nonumber
\intresdim M = 1.
\end{equation}
Consequently, the interval resolution global dimension for $D_4(b)$ is $1$
because $M$ is the only non-interval indecomposable module.
By a similar discussion, one can show that any $D_4$-type quiver has the interval resolution global dimension $1$.

\item The commutative ladders \cite{escolar2016persistence},
a special class of equioriented commutative 2D grids, can be thought of as a restricted setting for two-parameter persistence. 
A commutative ladder is a poset with $2m$ elements ($m>0$) and given by the following Hasse diagram:
\begin{equation*}
\begin{tikzpicture}
    \node (10) at (1,0) {$\bullet$}; 
    \node (20) at (2,0) {$\bullet$}; 
    \node (30) at (3,0) {$\bullet$}; 
    \node (50) at (5,0) {$\bullet$}; 
    \node (11) at (1,1) {$\bullet$}; 
    \node (21) at (2,1) {$\bullet$}; 
    \node (31) at (3,1) {$\bullet$}; 
    \node (51) at (5,1) {$\bullet$};
    \node (61) at (6,1) {$\bullet$}; 
    \node (60) at (6,0) {$\bullet$};
    \node (3'0) at (3.8,0) {}; 
    \node (3'1) at (3.8,1) {};
    \node (5'0) at (4.2,0) {}; 
    \node (5'1) at (4.2,1) {};
    \node at ($(31)!0.5!(51)$) {$\cdots$};
    \node at ($(30)!0.5!(50)$) {$\cdots$};
    \draw[->] (10)--(20); 
    \draw[->] (20)--(30); 
    \draw[->] (10)--(11); 
    \draw[->] (20)--(21);
    \draw[->] (30)--(31);
    \draw[->] (11)--(21); 
    \draw[->] (21)--(31); 
    \draw[->] (50)--(51); 
    \draw[->] (60)--(61); 
    \draw[->] (30)--(3'0);
    \draw[->] (31)--(3'1);
    \draw[<-] (50)--(5'0);
    \draw[<-] (51)--(5'1);
    \draw[->] (50)--(60);
    \draw[->] (51)--(61);
\end{tikzpicture}
\end{equation*}

In representation theory, it is well-known (see \cite{leszczynski1994representation,leszczynski2000tame,bauer2020cotorsion}  for example) that the corresponding incidence algebra is of finite (resp., tame, wild) representation type if and only if $1\leq m\leq 4 $ (resp., $m=5$ and $6 \leq m$).
In \cite{asashiba2023approximation}, they study several numerical invariants associated with interval resolutions and compute interval resolution global dimensions for small $m$ (over a field with $2$ elements). 
\end{enumerate}
\end{example}


\section{Resolution by subcategories closed under quotients of indecomposable objects} \label{sec3}
In this section, we study approximations and resolutions by a certain class of subcategories. 
Throughout this section, let $A$ be a finite dimensional algebra over a field $k$.

\subsection{Resolutions and supports}
We begin with the following definition.

\begin{definition}\label{Def:indec-quo}
Let $\mathcal{X}$ be a full subcategory of $\mod A$. 
We say that $\mathcal{X}$ is 
closed under quotients (resp., submodules) of indecomposable modules if, for any short exact sequence
\begin{equation}
0\to Z\to X \to Y \to 0
\end{equation}
with $X$ indecomposable, $X\in \mathcal{X}$ implies $Y \in \mathcal{X}$ (resp., $Z\in \mathcal{X}$).
\end{definition}

Our definition is motivated by the next finiteness which  follows from {\cite[Theorem in \S 5]{ringel2010iyama}} (cf. \cite[Lemma~2.2]{iyama2003finiteness}),
but see also \cite[Corollary~4.3]{asashiba2023approximation}.

\begin{theorem}\label{resolution-finite}
Let $\mathscr{X}$ be a finite collection of indecomposable $A$-modules which contains all projective modules. 
If $\add \mathscr{X}$ is 
closed under submodules of indecomposable modules, then the $\mathscr{X}$-resolution global dimension is finite.  
\end{theorem}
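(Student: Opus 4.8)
The plan is to reduce this statement to the finiteness theorem of Iyama, in the form due to Ringel. Write $\mathscr{X}=\{X_1,\dots,X_n\}$, set $T:=\bigoplus_{i=1}^{n}X_i$ and $\Gamma:=\End_A(T)^{\mathrm{op}}$, and consider the functor $F:=\Hom_A(T,-)\colon\mod A\to\mod\Gamma$. Since $\mathscr{X}$ contains all indecomposable projectives, $T$ is a generator, so $F$ is faithful and left exact; moreover $F$ restricts to an equivalence $\add\mathscr{X}\xrightarrow{\ \sim\ }\proj\Gamma$ and the natural map $\Hom_A(X,Y)\to\Hom_\Gamma(FX,FY)$ is bijective for every $X\in\add\mathscr{X}$ and every $Y\in\mod A$. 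The two facts I would establish are: \textbf{(i)} $\mathscr{X}\resdim M=\mathrm{pd}_\Gamma FM$ for every $M\in\mod A$, so in particular $\mathscr{X}\resgldim A\le\gldim\Gamma$; and \textbf{(ii)} $\gldim\Gamma<\infty$. Together these give the theorem.

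For \textbf{(i)} the key observation is that $F$ sends $\mathscr{X}$-exact short exact sequences (those on which $\Hom_A(X,-)$ is exact for all $X\in\mathscr{X}$) to short exact sequences of $\Gamma$-modules. Applying $F$ to a right minimal $\mathscr{X}$-resolution of $M$ — whose constituent short exact sequences are $\mathscr{X}$-exact because each $f_i$ is a right $(\add\mathscr{X})$-approximation — therefore produces a projective resolution of $FM$ of the same length, giving $\mathrm{pd}_\Gamma FM\le\mathscr{X}\resdim M$. Conversely, a minimal projective resolution of $FM$ can be lifted, using the full faithfulness of $F$ on $\add\mathscr{X}$, to a complex $J_e\to\cdots\to J_0\to M$ over $\mod A$ with all $J_i\in\add\mathscr{X}$; faithfulness and left-exactness of $F$ (together with the elementary fact that $F(U)\subseteq F(V)$ inside $F(W)$ forces $U\subseteq V$ for submodules $U,V\subseteq W$) show that this complex is exact and that the short exact sequences it produces are $\mathscr{X}$-exact, so it is an $\mathscr{X}$-resolution of $M$ of length $\le e$. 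Since resolution dimension may be computed from not-necessarily-minimal resolutions (\cite[Proposition~3.9]{asashiba2023approximation}), this yields $\mathscr{X}\resdim M\le\mathrm{pd}_\Gamma FM$. This part is conceptually routine, but the bookkeeping of exactness under $F$ must be done with care.

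For \textbf{(ii)}: this is precisely where the hypothesis enters, and it is Iyama's finiteness theorem in the form proved by Ringel. Because $\add\mathscr{X}$ contains all projectives and is closed under submodules of its indecomposable objects, one can order the indecomposable summands $X_1,\dots,X_n$ of $T$ (refining, for instance, the order by composition length) so that the kernel of a minimal right $(\add\{X_1,\dots,X_{i-1}\})$-approximation of $X_i$ again lies in $\add\mathscr{X}$ for each $i$; Ringel's argument \cite[\S 5]{ringel2010iyama} then exhibits $\Gamma$ as a (right) strongly quasi-hereditary algebra, whence $\gldim\Gamma<\infty$, with a bound in terms of $n=|\mathscr{X}|$. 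Combined with \textbf{(i)}, $\mathscr{X}\resgldim A\le\gldim\Gamma<\infty$. (Alternatively one may invoke \cite[Lemma~2.2]{iyama2003finiteness} or \cite[Corollary~4.3]{asashiba2023approximation} for the finiteness directly.)

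The main obstacle is \textbf{(ii)}: one must check that the hypothesis as stated — closure of $\add\mathscr{X}$ under submodules of \emph{indecomposable} modules, rather than under all submodules — is exactly what Ringel's ordering-and-filtration construction requires, i.e.\ that every submodule one is forced to take while building the quasi-hereditary structure on $\Gamma$ is a submodule of an indecomposable module lying in $\mathscr{X}$. Everything in \textbf{(i)} is standard Auslander-type yoga, but transporting resolutions back and forth between $\mod A$ and $\mod\Gamma$ while tracking exactness is where the remaining details sit.
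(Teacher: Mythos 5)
Your proof is correct and is essentially the argument the paper invokes through its citations: the paper records no proof of its own for this statement beyond citing Ringel, Iyama, and Asashiba, and its accompanying remark that $\Gamma=\End_A(T)^{\mathrm{op}}$ is a left strongly quasi-hereditary algebra (hence of finite global dimension) is precisely your step (ii). Your step (i) is the standard projectivization connecting $\gldim\Gamma$ to the relative resolution dimension --- made rigorous by multiplying by the idempotent of $\Gamma$ corresponding to the projective summand $A$ of $T$, which retracts $F$ onto the identity functor and thereby yields the required exactness and $\mathscr{X}$-exactness of the lifted complex --- a step the paper leaves implicit inside the cited references.
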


In this case, the endomorphism algebra of the direct sum of all modules in $\mathscr{X}$ is known to be a \emph{left strongly quasi-hereditary algebra}
(see \cite{dlab_ringel_1992} for backgrounds of quasi-hereditary algebras).

For example, we give some classes of full subcategories which satisfy our conditions.
We will see in Example \ref{Example:IQposet} more examples coming from combinatorics.

\begin{example}
  \leavevmode
  \begin{enumerate}
  \item For a hereditary algebra, the subcategory of projective (resp., injective) modules is
    closed under submodules (resp., quotients) of indecomposable modules.
  \item The class of all modules having simple socle (resp., simple top) is
    closed under submodules (resp., quotients) of indecomposable modules.
  \item The class of all thin modules is
    closed under both submodules and quotients of indecomposable modules.
    Here, a module $M$ is said to be \emph{thin} if
    the dimension $\dim_k Me$ is at most one for every primitive idempotent $e$.
  \item For a module $M$, we denote by $\Fac(M)$ the subcategory consisting of all modules which are quotients of direct sums of finite copies of $M$.
    Then, it is
    closed under quotients of indecomposable modules
    by definition.
  \item For a module $M$, the smallest full subcategory which contains $M$ and is
    closed under quotients of indecomposable modules
    is nothing but the additive closure of all indecomposable quotients of $M$.
    Notice that it differs from $\Fac(M)$ in general. \end{enumerate}
\end{example}

Now, we analyze right minimal $\mathcal{X}$-approximations for a given 
subcategory $\mathcal{X}$ which is closed under quotients of indecomposable modules. 

\begin{theorem} \label{thm:resolusion-quotient}
Let $\mathcal{X}$ be a full subcategory of $\mod A$ such that $\mathcal{X}=\add \mathcal{X}$ and is 
closed under quotients of indecomposable modules. 
For an $A$-module $M$, if it admits a right minimal $\mathcal{X}$-approximation $f\colon X\to M$, then the following statements hold. 
\begin{enumerate}[\rm (1)]
    \item The restriction of $f$ to each indecomposable direct summand of $X$ is injective.
    \item $\supp X \subseteq \supp M$. 
\end{enumerate}
Moreover, if $\mathcal{X}$ contains all projective $A$-modules, then $f$ is surjective and the equality holds in (2) of the statement.
\end{theorem}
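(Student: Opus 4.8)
The plan is to prove the two numbered statements together by analyzing the structure forced by right minimality, and then handle the ``moreover'' clause separately. Let me write $f\colon X \to M$ for the right minimal $\mathcal{X}$-approximation, and decompose $X = \bigoplus_{i=1}^m X_i$ into indecomposables (each $X_i \in \mathcal{X}$ since $\mathcal{X} = \add\mathcal{X}$).

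For statement (1), fix an index $i$ and set $K_i := \Ker(f|_{X_i})$. The key observation is that $X_i / K_i$ is an indecomposable quotient of $X_i \in \mathcal{X}$, hence $X_i/K_i \in \mathcal{X}$ by the closure hypothesis (and it is indecomposable as a nonzero quotient of the indecomposable $X_i$, provided $K_i \neq X_i$; if $K_i = X_i$ then $X_i$ lies in the kernel of $f$ and right minimality will already be contradicted below). The map $f|_{X_i}$ factors as $X_i \twoheadrightarrow X_i/K_i \overset{\bar f_i}{\hookrightarrow} M$ with $\bar f_i$ injective. Now I would assemble a new map $g\colon X' := \bigoplus_i X_i/K_i \to M$ out of the $\bar f_i$, note $X' \in \mathcal{X}$, and check that $g$ is still a right $\mathcal{X}$-approximation: given any $Y \in \mathcal{X}$ and a morphism $h\colon Y \to M$, lift it through $f$ to $\tilde h\colon Y \to X$ and post-compose with the quotient map $X \to X'$; this lands in the right place because $f$ factors through $X \to X'$ followed by $g$. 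Since $f$ is right *minimal*, any right $\mathcal{X}$-approximation factoring "through" $f$ in the appropriate sense must be isomorphic to $f$ via the comparison, forcing the quotient map $X_i \to X_i/K_i$ to be an isomorphism for each $i$, i.e. $K_i = 0$. I expect the main obstacle to be making precise the sense in which $g$ and $f$ are comparable and extracting the isomorphism from right minimality: the cleanest route is probably to observe that $f$ and $g$ are related by the canonical surjection $p\colon X \to X'$ with $g p = f$, and that by the approximation property of $g$ there is $q\colon X \to X'$... wait, better: use that $f$ is an approximation to lift the identity, obtaining $s\colon X' \to X$ with $f s = g$; then $g(ps) = f s = g$ and $f(sp) = g p = f$, so $sp$ is an automorphism of $X$ by right minimality of $f$, whence $p$ is a split mono; but $p$ is also surjective, so $p$ is an isomorphism, giving $K_i = 0$ for all $i$.

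Statement (2) then follows immediately: if $a \in \supp X$, then $(X_i)_a \neq 0$ for some $i$, and since $f|_{X_i}$ is injective (by (1)), applying the exact functor $(-)_a = (-)\,e_a$ gives $(X_i)_a \hookrightarrow M_a$, so $M_a \neq 0$, i.e. $a \in \supp M$. Hence $\supp X \subseteq \supp M$.

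For the ``moreover'' clause, suppose $\mathcal{X}$ contains all projective $A$-modules. Surjectivity of $f$ is standard: taking a projective cover $P \twoheadrightarrow M$, the surjection factors through $f$ by the approximation property (as $P \in \mathcal{X}$), so $\Image f \supseteq \Image(P \to M) = M$. (Alternatively this is already recorded in the preliminaries: every $(\add\mathscr X)$-approximation is surjective when $\mathscr X$ contains all indecomposable projectives.) For the equality $\supp X = \supp M$, combine surjectivity with (2): if $a \in \supp M$ then $M_a \neq 0$, and $f_a\colon X_a \to M_a$ is surjective (applying the exact functor $(-)e_a$ to the surjection $f$), so $X_a \neq 0$, giving $\supp M \subseteq \supp X$; together with (2) this yields $\supp X = \supp M$. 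I do not anticipate difficulty here — the only subtlety is confirming that restriction to a vertex/idempotent is exact, which is immediate since $(-)e_a$ is the same as $\Hom_A(e_a A, -)$ up to the standard identifications, or directly because idempotent truncation is exact.
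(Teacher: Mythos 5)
Your proof is correct and follows essentially the same strategy as the paper's: factor $f|_{X_i}$ through its image, lift the assembled factorization back through $f$ via the approximation property, and invoke right minimality to force the canonical surjection $p\colon X\to X'$ to be an isomorphism (the paper reaches the same end by showing the diagonal map of factorizations $f'$ is injective, but the underlying idea is identical). One aside should be dropped: your parenthetical claim that a nonzero quotient of an indecomposable module is itself indecomposable is false in general, but this is harmless here since the hypothesis ``closed under quotients of indecomposable modules'' requires only that the \emph{ambient} module be indecomposable, not the quotient, and you never actually use that assertion.
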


\begin{proof}
(1) Let
\begin{equation}
    f = (f_i)_{i=1}^m \colon X=\bigoplus_{i=1}^m X_i \longrightarrow M
\end{equation}
be a right minimal $\mathcal{X}$-approximation of $M$, where $X_i\in \mathcal{X}$ are indecomposable.
For $j\in \{1,\ldots,m\}$, we have a decomposition 
\begin{equation}
    f_j \colon X_j \xrightarrow{f_j'} \Image f_j \xrightarrow{f_j''} M. 
\end{equation}
Since $\mathcal{X}$ is 
closed under quotients of indecomposable modules, $X_j\in \mathcal{X}$ implies $\Image f_j\in \mathcal{X}$. Since $f\colon X\to M$ is a right $\mathcal{X}$-approximation, 
there is a homomorphism $g_j\colon \Image f_j \to X$ satisfying $f_j'' = f\circ g_j$.
\begin{equation}\nonumber
\xymatrix{
    X_j \ar[rr]^-{f_j'} \ar[d]^{f_j} && \Image f_j \ar@{>}[dll]^-{f_j''} \ar@{.>}[d]^{g_j} \\
    M&& X. \ar[ll]^{f}
}
\end{equation}
In particular, we have
\begin{equation} \label{hom fgc}
f_j = f \circ g_j\circ f_j'.
\end{equation}

Now, let $f'$ and $g$ be the following homomorphisms:
\begin{equation}
\xymatrix{
    f' \colon X = {\displaystyle\bigoplus_{i=1}^m X_i}
    \ar[rr]^-{
    \begin{bsmallmatrix} f_1' & & \\ & \ddots & \\ & & f_m'
    \end{bsmallmatrix}
    }
    &&{\displaystyle\bigoplus_{i=1}^m X_i}
}
\quad \text{and} \quad
g\colon \xymatrix{
{\displaystyle\bigoplus_{i=1}^m X_i}
\ar[rr]^-{\begin{bsmallmatrix} g_1 & \hdots & g_m
\end{bsmallmatrix}} & & X.
}
\end{equation}
Then $h := g f'$ is an endomorphism of $X$ satisfying
$f=fh$
by \eqref{hom fgc}.
Since $f$ is right minimal, $h$ is an isomorphism. Since $h=g f'$, it implies that $f'$ is injective.
Furthermore, by the definition of $f'$, we have that $f_j'$ is injective for all $j\in \{1,\ldots,m\}$.
Therefore, $f_j$ is injective for all $j\in \{1,\ldots,m\}$.
This finishes the proof of (1).
The assertion (2) is immediate from (1).

Finally, we assume that $\mathcal{X}$ contains all projective modules. Then, $f$ is surjective. 
In particular, we have $\supp X \supseteq \supp M$. By (2), we obtain $\supp X = \supp M$ as desired.
\end{proof}

From now on, we fix a finite collection $\mathscr{X}$ of indecomposable $A$-modules and study $\mathscr{X}$-resolutions of modules over the support algebra. 
For an idempotent $e\in A$, let $B:=A/\langle 1-e \rangle$. 
We can regard $\mod B$ as a full subcategory of $\mod A$ via a natural surjection. 
In this case, $B$-modules are precisely $A$-modules whose supports are contained in $\mathcal{S}_e$, where $\mathcal{S}_e$ denotes the support of the semisimple $A$-module corresponding to $e$. 
In addition, for a given full subcategory $\mathcal{A}$ of $\mod A$, let $\bar{\mathcal{A}}:=\mathcal{A}\cap \mod B$. 
By the above discussion, $\bar{\mathcal{A}}$ consists of all modules in $\mathcal{A}$ whose supports are contained in $\mathcal{S}_e$. Especially, we have $\overline{\add \mathscr{X}} = \add \bar{\mathscr{X}}$ in our setting. 

\begin{lemma}\label{lem:barX}
If $\add \mathscr{X}$ is closed under quotients of indecomposable modules, then so is $\add \bar{\mathscr{X}}$. 
In addition, if moreover $\add \mathscr{X}$ contains all projective $A$-modules, then $\add \bar{\mathscr{X}}$ contains all projective $B$-modules. 
\end{lemma}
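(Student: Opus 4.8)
The plan is to unwind the definitions carefully. Recall that $B = A/\langle 1-e\rangle$ and that we identify $\mod B$ with the full subcategory of $\mod A$ consisting of modules $N$ with $\supp N \subseteq \mathcal{S}_e$, and that $\bar{\mathscr{X}}$ consists of those indecomposable $A$-modules in $\mathscr{X}$ which happen to lie in $\mod B$, i.e.\ whose support is contained in $\mathcal{S}_e$. By the remark preceding the lemma, $\add\bar{\mathscr{X}} = \overline{\add\mathscr{X}} = (\add\mathscr{X})\cap\mod B$.

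First I would prove the quotient-closure statement. Take a short exact sequence $0\to Z\to X\to Y\to 0$ of $B$-modules with $X$ indecomposable and $X\in\add\bar{\mathscr{X}}$. Since this is also a short exact sequence of $A$-modules and $X$ is an indecomposable $A$-module lying in $\add\mathscr{X}$, the hypothesis that $\add\mathscr{X}$ is closed under quotients of indecomposable modules gives $Y\in\add\mathscr{X}$. But $Y$ is a quotient of the $B$-module $X$, hence $Y$ is itself a $B$-module (its support is contained in that of $X$, which lies in $\mathcal{S}_e$); concretely $Y(1-e) = 0$ because this already holds for $X$ and surjections preserve it. Therefore $Y\in(\add\mathscr{X})\cap\mod B = \add\bar{\mathscr{X}}$, which is exactly what is needed. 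The one point to be slightly careful about is that ``indecomposable as a $B$-module'' and ``indecomposable as an $A$-module'' agree for objects of $\mod B$, since the inclusion $\mod B\hookrightarrow\mod A$ is fully faithful and exact, so there is no ambiguity in the statement.

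Next I would handle the projectivity statement: assuming $\add\mathscr{X}$ contains all indecomposable projective $A$-modules, I must show $\add\bar{\mathscr{X}}$ contains all indecomposable projective $B$-modules. The indecomposable projective $B$-modules are $e_aB$ for primitive idempotents $e_a$ with $e_a \in \add(e)$ (equivalently $a\in\mathcal{S}_e$), and one has $e_aB \cong (e_aA)/(e_aA)(1-e)$, i.e.\ $e_aB = e_aA \otimes_A B$ is a quotient of the indecomposable projective $A$-module $e_aA$. Now $e_aB$ is indecomposable (it has local endomorphism ring, being projective indecomposable over $B$), it lies in $\mod B$, and it is a quotient of the indecomposable $A$-module $e_aA\in\add\mathscr{X}$; applying closure under quotients of indecomposable modules as above, $e_aB\in(\add\mathscr{X})\cap\mod B=\add\bar{\mathscr{X}}$. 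Hence every indecomposable projective $B$-module lies in $\add\bar{\mathscr{X}}$.

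There is no serious obstacle here; the only thing requiring a moment's thought is the bookkeeping that ``quotient of a $B$-module is a $B$-module,'' which is immediate from $B = A/\langle 1-e\rangle$ and the fact that the right ideal $\langle 1-e\rangle$ annihilates every $B$-module, so it annihilates any quotient of one. Everything else is a direct application of the defining closure property of $\add\mathscr{X}$ together with the identification $\add\bar{\mathscr{X}} = (\add\mathscr{X})\cap\mod B$ established just before the lemma. I would write the proof in essentially two short paragraphs mirroring the two assertions.
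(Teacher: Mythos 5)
Your proposal is correct and follows essentially the same route as the paper's own proof: closure under quotients follows from the fact that a quotient of a module supported on $\mathcal{S}_e$ is again supported on $\mathcal{S}_e$, and the projective claim is handled by observing that each indecomposable projective $B$-module arises as a quotient of an indecomposable projective $A$-module. You simply spell out the details (the identification $\mod B\hookrightarrow\mod A$, the computation $e_aB$ as a quotient of $e_aA$) that the paper leaves implicit.
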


\begin{proof}
    We assume that $\add \mathscr{X}$ is closed under quotients of indecomposable modules. 
    The former assertion is clear since $\supp(N)\subseteq \supp(M)\subseteq \mathcal{S}_e$ holds for any indecomposable $B$-module $M$ and its quotient $N$. 
    On the other hand, every indecomposable projective $B$-module can be obtained by a quotient of some indecomposable projective $A$-module. 
    Thus, if moreover $\add \mathscr{X}$ contains all indecomposable projective $A$-modules, then 
    $\add \bar{\mathscr{X}}$ contains all indecomposable projective $B$-modules. 
    This shows the latter assertion. 
\end{proof}

Therefore, the notion of $\bar{\mathscr{X}}$-resolutions makes sense in the setting of Lemma \ref{lem:barX}.

\begin{proposition}\label{prop:resolution-supportalg}
    Assume that $\mathscr{X}$ is 
    closed under quotients of indecomposable modules
    and contains all indecomposable projective $A$-modules. Then, the following 
    hold for any $B$-module $M$.
    \begin{enumerate}[\rm (1)]
    \item A right minimal $(\add \bar{\mathscr{X}})$-approximation of $M_B$ is
      exactly a right minimal $(\add \mathscr{X})$-approximation of $M_A$.
    \item A right minimal $\bar{\mathscr{X}}$-resolution of $M_B$ is exactly a minimal right $\mathscr{X}$-resolution of $M_A$. 
    \item The $\bar{\mathscr{X}}$-resolution dimension of $M_B$  coincides with the $\mathscr{X}$-resolution dimension of $M_A$. 
    \end{enumerate}
    Therefore, if the $\mathscr{X}$-resolution global dimension of $A$ is finite, then we have 
    \begin{equation}\nonumber
        \bar{\mathscr{X}}\resgldim B \leq \mathscr{X}\resgldim A.
    \end{equation}
    In particular, $\bar{\mathscr{X}}$-resolution dimension of $B$ is also finite. 
\end{proposition}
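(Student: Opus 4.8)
The plan is to prove statement (1) — that the two notions of right minimal approximation coincide — by checking each implication separately, and then to obtain (2), (3) and the final inequality by an immediate induction on syzygies. The crucial input is Theorem \ref{thm:resolusion-quotient}: since $\add \mathscr{X}$ contains all indecomposable projectives and is closed under quotients of indecomposable modules, every right minimal $(\add \mathscr{X})$-approximation $X \to N$ is surjective and satisfies $\supp X = \supp N$. When $N$ is a $B$-module, i.e.\ $\supp N \subseteq \mathcal{S}_e$, this forces $\supp X \subseteq \mathcal{S}_e$, so $X$ lies in $\mod B$; combined with $X \in \add \mathscr{X}$ this gives $X \in \overline{\add \mathscr{X}} = \add \bar{\mathscr{X}}$.

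For the first implication of (1), I would take a right minimal $(\add \mathscr{X})$-approximation $f\colon X \to M$ of $M_A$. By the remark above, $f$ is surjective, $X$ is a $B$-module, and $X \in \add \bar{\mathscr{X}}$. For any $Y \in \add \bar{\mathscr{X}} \subseteq \add \mathscr{X}$ the modules $Y, X, M$ all lie in the full subcategory $\mod B$ of $\mod A$, so $\Hom_B(Y,f) = \Hom_A(Y,f)$, which is surjective; hence $f$ is a right $(\add \bar{\mathscr{X}})$-approximation over $B$. Right minimality is a statement about $\End(X)$, which is computed identically in $\mod A$ and in $\mod B$, so $f$ is right minimal over $B$ as well.

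For the converse implication, let $f\colon X \to M$ be a right minimal $(\add \bar{\mathscr{X}})$-approximation of $M_B$; I claim it is a right $(\add \mathscr{X})$-approximation of $M_A$. It suffices to lift an arbitrary $\phi\colon Y \to M$ with $Y \in \mathscr{X}$ indecomposable. The image $\Image \phi$ is a quotient of the indecomposable module $Y \in \add \mathscr{X}$, so $\Image \phi \in \add \mathscr{X}$; moreover $\supp(\Image \phi) \subseteq \supp M \subseteq \mathcal{S}_e$, whence $\Image \phi \in \overline{\add \mathscr{X}} = \add \bar{\mathscr{X}}$. The inclusion $\Image \phi \hookrightarrow M$ then factors through $f$ by the approximation property over $B$, and precomposing with the surjection $Y \twoheadrightarrow \Image \phi$ produces a factorization of $\phi$ through $f$ in $\mod A$. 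Thus $\Hom_A(Y,f)$ is surjective for every indecomposable $Y \in \mathscr{X}$, hence for every $Y \in \add \mathscr{X}$, and right minimality transfers as before. This proves (1).

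Finally, for (2) and (3): with $f\colon X \to M$ as in (1), the kernel $\Omega$ sits in a short exact sequence $0 \to \Omega \to X \to M \to 0$, and being a submodule of the $B$-module $X$ it is again a $B$-module, so $\Omega_{\mathscr{X}}(M_A) = \Omega = \Omega_{\bar{\mathscr{X}}}(M_B)$. Iterating this, the right minimal $\mathscr{X}$-resolution of $M_A$ and the right minimal $\bar{\mathscr{X}}$-resolution of $M_B$ are literally the same exact sequence, so their lengths agree and $\bar{\mathscr{X}}\resdim M_B = \mathscr{X}\resdim M_A$. Taking the supremum over all $B$-modules $M$ then yields $\bar{\mathscr{X}}\resgldim B \le \mathscr{X}\resgldim A$, which is finite whenever the right-hand side is; in particular $\bar{\mathscr{X}}\resdim B$ is finite. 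I do not anticipate a genuine obstacle here: the only points requiring care are the identity $\add \bar{\mathscr{X}} = \overline{\add \mathscr{X}}$ together with Lemma \ref{lem:barX}, and confirming that Theorem \ref{thm:resolusion-quotient} is applicable — which is exactly why the two standing hypotheses on $\mathscr{X}$ are imposed.
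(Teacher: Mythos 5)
Your proposal is correct and follows essentially the same route as the paper: apply Theorem \ref{thm:resolusion-quotient} to place the approximating object in $\add\bar{\mathscr{X}}$, transfer the approximation and minimality properties across the full subcategory $\mod B \subseteq \mod A$, and iterate on syzygies for (2), (3) and the global bound. The only (harmless) difference is that for the converse direction of (1) you verify the lifting property directly by factoring through $\Image\phi$, whereas the paper simply invokes uniqueness of right minimal approximations once one direction is established.
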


\begin{proof}
    Let $M$ be an $B$-module. Regarding it as an $A$-module, we take a right minimal $(\add \mathscr{X})$-approximation $f\colon J\to M_A$ with $J\in \add \mathscr{X}$. By Theorem \ref{thm:resolusion-quotient}, we have $\supp(J)=\supp(M)\subset \mathcal{S}_e$. It means that $J\in \add \mathscr{X}\cap \mod B = \add \bar{\mathscr{X}}$. 
    Since $\mod B$ is a full subcategory of $\mod A$, $f$ gives a right minimal $(\add \bar{\mathscr{X}})$-approximation of $M_B$. 
    Since a right minimal approximation is determined up to isomorphism, we get the assertion (1). 
    Furthermore, by (1), we have $\Omega_{\mathscr{X}}(M_A) = \Omega_{\bar{\mathscr{X}}}(M_B)$.  
    Repeating this discussion, we obtain $\Omega^m_{\mathscr{X}}(M_A) = \Omega^m_{\bar{\mathscr{X}}}(M_B)$. 
    Thus, the assertion (2) and (3) hold.
    The last assertion follows from (3).
\end{proof}

The above proposition asserts that, for computing the $\mathscr{X}$-resolution of a given $A$-module $M$, we may reduce the algebra $A$ to its support algebra. More detail, it can be refined as follows. 

\begin{example}\label{Example:resolutiondimension}
    Assume that $\mathscr{X}$ is 
    closed under quotients of indecomposable modules
    and contains all indecomposable projective $A$-modules.
    In addition, we assume that the $\mathscr{X}$-resolution global dimension of $A$ is finite
    (for example, 
    if $\add \mathscr{X}$ is also
    closed under submodules of indecomposable modules, see Theorem \ref{resolution-finite}). 
    For a given $A$-module $M$, a right minimal $ \mathscr{X}$-resolution of $M$ can be computed in the following way. 
    \begin{enumerate}[\rm (1)]
        \item 
        Consider the support algebra of $M$, that is, $B_0 := A/\langle 1-\sum_{x\in \supp(M)}e_x \rangle$, and set $\mathscr{X}_0:=\mathscr{X}\cap \mod B_0$. 
        Then, we compute a right minimal $(\add \mathscr{X}_0)$-approximation $f \colon J \to M$ and a short exact sequence in $\mod B_0$: 
        \begin{equation*}
            0 \to K_1 \xrightarrow{\iota} J \xrightarrow{f} M \to 0. 
        \end{equation*}
        \item For an integer $i\geq 1$,  
        we regard $K_i$ as a module over its support algebra, say $B_i$, and set $\mathscr{X}_{i} := \mathscr{X}_{i-1} \cap \mod B_i$. 
        Then, we compute a right minimal $(\add \mathscr{X}_{i})$-approximation $f_i\colon J_i\to K_i$ and a short exact sequence in $\mod B_i$:
        \begin{equation*}
            0 \to K_{i+1} \xrightarrow{\iota_i} J_i \xrightarrow{f_i} K_i \to 0. 
        \end{equation*}
        \end{enumerate}
    Repeating (2), we finally get a chain complex
    \begin{equation}\label{res algorithm}
        \xymatrix@C=15pt{&\ar@{.}[r] & \ar[r]& J_2 \ar[rr]^{g_2}\ar@{.>}[rd]_{f_2}&&  J_1\ar[rr]^{g_1}\ar@{.>}[rd]_{f_1} && J \ar[rr]^{f} && M_A \ar[rr] && 0 \\
        &&&&\ar@{}[u]|{\circlearrowright}K_2\ar@{.>}[ur]_{\iota_1} && \ar@{}[u]|{\circlearrowright}K_1\ar@{.>}[ur]_{\iota}}
    \end{equation}
    and a filtration
    \begin{equation}\nonumber
        \cdots \subseteq \mathscr{X}_2 \subseteq  \mathscr{X}_{1} \subseteq \mathscr{X}_0 \subseteq \mathscr{X}. 
    \end{equation}
    Here, all modules and morphisms of \eqref{res algorithm} are considered in $\mod A$. 
    Then, $K_m$ is the $m$-th $\mathscr{X}$-syzygy of $M_A$ for any $m>0$ by Proposition \ref{prop:resolution-supportalg}.  
    If $0\neq K_m\in \add \mathscr{X}$ for some $m$, 
    then $K_i=0$ for all $i>m$. In this case, \eqref{res algorithm} gives a right minimal $\mathscr{X}$-resolution of $M_A$ and
    \begin{equation}\nonumber
        \mathscr{X}\resdim M_A = m.
    \end{equation}
\end{example}

\subsection{Application to convex full subposets}\label{sec:convex}
Let $P$ be a finite poset and $k[P]$ the incidence algebra of $P$ over a field $k$. Recall that $\mathscr{I}_P$ is the set of isomorphism classes of interval $k[P]$-modules, which contains all indecomposable projective modules, and also all indecomposable injective modules. 

\begin{proposition}[{\cite[Lemma 4.4 and its dual]{asashiba2023approximation}}]
\label{interval-subquot}
    The subcategory $\add \mathscr{I}_P$ is 
    closed under both submodules and quotients of indecomposable modules. 
\end{proposition}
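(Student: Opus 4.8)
The plan is to work directly with the explicit combinatorial description of interval modules. Since $\mathscr{I}_P$ is a set of (isomorphism classes of) indecomposable modules, every indecomposable object of $\add \mathscr{I}_P$ is isomorphic to $k_I$ for some $I \in \mathbb{I}(P)$, so it suffices to show that every submodule and every quotient module of $k_I$ is interval-decomposable. Moreover, the standard $k$-duality $D = \Hom_k(-,k)$, a contravariant equivalence between $\mod k[P]$ and $\mod k[P^{\mathrm{op}}]$, carries $k_I$ to the interval $k[P^{\mathrm{op}}]$-module attached to the \emph{same} subset $I$ (connectedness and convexity do not depend on the orientation of $P$), and it interchanges submodules with quotient modules. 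Hence it is enough to handle submodules, the claim for quotients following by applying that case to $P^{\mathrm{op}}$.

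So let $I \in \mathbb{I}(P)$ and let $N$ be a submodule of $k_I$. The first step is to identify $N$: for each $a \in P$ the subspace $N_a \subseteq (k_I)_a$ is either $0$ or all of $(k_I)_a$, since $(k_I)_a$ is at most one-dimensional; hence $N = k_J$, with its canonical structure maps, where $J := \supp N \subseteq I$. The key point, and the place where the hypothesis on $I$ is used, is that $J$ is a convex subset of $P$: if $a,c \in J$ and $b \in P$ with $a \leq b \leq c$, then $b \in I$ because $I$ is convex, the structure map $k_I(a\leq b)$ is the identity of $k$, and therefore $N_b \supseteq k_I(a \leq b)(N_a) = (k_I)_b \neq 0$, i.e. $b \in J$. (The same computation shows $J$ is upward-closed in $I$.) The second step is to split $J$ into its connected components $J = \bigsqcup_t J_t$: each $J_t$ is connected by construction and convex in $P$ — for $a,c \in J_t$ and $a \leq b \leq c$ we have $b \in J$ by the previous step, and $b$ is comparable to $a \in J_t$, so $b$ lies in the same component — hence each $J_t \in \mathbb{I}(P)$. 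Finally, a structure map of $k_J$ is nonzero only between elements of a common component, so $k_J \cong \bigoplus_t k_{J_t}$ is interval-decomposable.

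For the quotient case one may either invoke the duality above or argue the same way directly: with $J := \supp N$, upward-closedness of $J$ in $I$ gives $k_I/N \cong k_{I \setminus J}$, and $I \setminus J$ is convex in $P$ (if $a,c \in I\setminus J$ and $a \leq b \leq c$ then $b \in I$, and $b \in J$ would force $c \in J$), after which the connected-component argument applies verbatim. I do not anticipate any serious obstacle in carrying this out; the only subtlety is the recurring observation that the support of a subrepresentation of $k_I$ — and, dually, the complement of the support of a quotient — is convex in $P$, which is exactly where convexity of $I$ enters, together with the elementary fact that the connected components of a convex subset of a poset are themselves convex, hence genuine intervals.
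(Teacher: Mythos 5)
Your argument is correct. Note that the paper does not prove this proposition at all — it is imported verbatim as a citation to \cite[Lemma 4.4 and its dual]{asashiba2023approximation} — so what you have supplied is a self-contained proof of an outsourced fact rather than an alternative to an in-paper argument. Your route is the natural one and each step checks out: a submodule $N$ of $k_I$ is the thin module with identity structure maps on $J=\supp N$, which is upward-closed in $I$ and convex in $P$ (this is exactly where convexity of $I$ is used); the connected components of a convex subset are intervals, giving $N\cong\bigoplus_t k_{J_t}\in\add\mathscr{I}_P$; and the quotient case follows either by the $k$-duality $\mod k[P]\simeq (\mod k[P^{\mathrm{op}}])^{\mathrm{op}}$, which fixes the underlying subsets of intervals and swaps submodules with quotients, or by the direct observation that $k_I/N\cong k_{I\setminus J}$ with $I\setminus J$ convex. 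The one stylistic caution is that you write $k_J$ for a set $J$ that need not be an interval; since the paper only defines $k_J$ for $J\in\mathbb{I}(P)$, you should flag that you mean the thin module supported on $J$ before decomposing it.
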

It deduces the finiteness of the interval resolution global dimensions by using Theorem \ref{resolution-finite}.
More strongly, 
the following result is known.

\begin{theorem}[{\cite[Propositions~4.5,~4.9]{asashiba2023approximation}}] \label{intgldim-finite}
    For a finite poset $P$, the interval resolution global dimension is finite. Moreover, we have
    \begin{equation}\nonumber
        \intresgldim k[P] = 
        \max_{I\in \mathbb{I}(P)} \intresdim \tau (k_I) < \infty,
    \end{equation}
    where $\tau$ denotes the Auslander-Reiten translation for $\mod k[P]$.
\end{theorem}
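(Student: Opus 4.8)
The plan is to establish the finiteness via Theorem~\ref{resolution-finite} together with Proposition~\ref{interval-subquot}, and then prove the formula $\intresgldim k[P] = \max_{I\in\mathbb{I}(P)} \intresdim \tau(k_I)$ by bounding the interval resolution dimension of an arbitrary indecomposable module in terms of the interval resolution dimensions of the modules $\tau(k_I)$. First I would recall that, by Proposition~\ref{interval-subquot}, $\add\mathscr{I}_P$ is closed under submodules of indecomposable modules and contains all indecomposable projectives, so Theorem~\ref{resolution-finite} immediately gives $\intresgldim k[P] < \infty$. The inequality $\geq$ in the displayed formula is trivial since each $\tau(k_I)$ is in particular a $k[P]$-module. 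So the content is the inequality $\intresgldim k[P] \leq \max_{I} \intresdim \tau(k_I)$.

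For the key inequality I would argue as follows. It suffices to bound $\intresdim M$ for each indecomposable $M$. If $M$ is itself interval, then $\intresdim M = 0$ and there is nothing to do. If $M$ is not interval (hence not projective and not injective, since all such are interval over an incidence algebra), then consider a right minimal interval cover $f\colon X\to M$. By Theorem~\ref{thm:resolusion-quotient}(1) each indecomposable summand of $X$ maps injectively into $M$; dualizing, one analyzes the syzygy $\Omega_{\mathscr{I}_P}(M) = \Ker f$. The crucial point—this is where I expect the real work to be, and presumably where \cite{asashiba2023approximation} does something clever—is to show that $\Omega_{\mathscr{I}_P}(M)$, or rather each of its indecomposable summands, is a quotient of (or otherwise controlled by) some $\tau(k_I)$, or symmetrically that the \emph{first} syzygy of $M$ already has the shape $\tau(k_I)$ up to interval summands. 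Concretely, one uses that the minimal right $\add\mathscr{I}_P$-approximation of $M$ is built from an injective-like resolution within the relative structure: since $\mathscr{I}_P$ contains all injectives, one can compare the interval cover with the injective envelope and use the Auslander-Reiten formula $\underline{\Hom}(M, \tau^{-1} N) \cong D\,\overline{\Ext}^1(N,M)$ to identify the relevant obstruction module. The upshot should be: $\Omega_{\mathscr{I}_P}(M) \in \add(\mathscr{I}_P \cup \{\tau(k_I) : I\in\mathbb{I}(P)\})$, so that $\intresdim M \leq 1 + \max_I \intresdim \tau(k_I)$ — but this overshoots by one, so the sharper statement must be that $\Omega_{\mathscr{I}_P}(M)$ is a \emph{direct summand of} $\tau(k_I)$ for a suitable $I$, giving $\intresdim M \leq \intresdim \tau(k_I)$ directly when $M$ is non-interval indecomposable.

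Let me restructure the argument accordingly. The cleanest route: for non-interval indecomposable $M$, take the minimal right $\add\mathscr{I}_P$-approximation $f\colon X\to M$; I claim $N := \Omega_{\mathscr{I}_P}(M)$ is isomorphic to $\tau(k_I)$ for some interval $I$, up to projective summands which can be absorbed. This would follow from a Wakamatsu-type lemma for the relative setting combined with the classification of almost split sequences: the relative syzygy sits in a short exact sequence $0\to N\to X\to M\to 0$ with $X$ interval-decomposable, and minimality plus the fact that $\mathscr{I}_P$ is covariantly finite forces $\Ext^1(k_I, N) = 0$ for all $I$ except in a controlled way, pinning $N$ down via the Auslander-Reiten quiver. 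Granting this, $\intresdim M = \intresdim N \leq \max_I \intresdim \tau(k_I)$, and taking the supremum over all indecomposable $M$ (interval ones contributing $0$) yields $\intresgldim k[P] \leq \max_I \intresdim \tau(k_I)$, completing the proof.

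The main obstacle, as flagged above, is the identification of the relative syzygy $\Omega_{\mathscr{I}_P}(M)$ with a module of the form $\tau(k_I)$ (up to harmless summands): this requires the structure theory of relative almost split sequences over the relative exact structure determined by $\add\mathscr{I}_P$, and is exactly the technical heart of \cite{asashiba2023approximation}. Since the statement is quoted from there, for the present paper I would simply cite Propositions~4.5 and~4.9 of \cite{asashiba2023approximation} and not reprove it; the role of the theorem here is only to guarantee finiteness and to supply the convenient formula, both of which are used downstream in Sections~\ref{sec4} and~\ref{sec5}.
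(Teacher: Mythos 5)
Your proposal matches the paper's treatment: the paper gives no proof of this theorem, deriving only the finiteness part from Theorem~\ref{resolution-finite} together with Proposition~\ref{interval-subquot} and citing \cite{asashiba2023approximation} for the formula, which is exactly where you end up. One caution about your speculative middle step: the claim that the first relative syzygy $\Omega_{\mathscr{I}_P}(M)$ of a non-interval indecomposable is $\tau(k_I)$ up to harmless summands is not the mechanism of the cited result and is false in general (the syzygy may be decomposable, or itself interval); the actual argument there is that $\tau\bigl(\bigoplus_{I} k_I\bigr)$ together with the injectives supplies the relative injectives of the exact structure determined by $\add\mathscr{I}_P$, so the relative global dimension is attained on those modules.
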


We also mention that specific subclasses of intervals are of interest. 

\begin{example} \label{Example:IQposet}
\begin{enumerate}[\rm (1)]
    \item The subclass of all upper sets (resp., lower sets) are 
    closed under quotients (resp., submodule) of indecomposable modules.
    \item The subclass of all intervals having a single minimal element is
    closed under quotients of indecomposable modules.
    They are called \emph{single-source spreads} in \cite{blanchette2021homological}.
    Dually, a subclass provided by all intervals having single maximal element are 
    closed under submodules of indecomposable modules.
\end{enumerate}
\end{example}

Thanks to Proposition \ref{interval-subquot},
we can apply the results in the previous subsection to study interval resolutions of modules over $k[P]$.
The next result follows immediately from Theorem \ref{thm:resolusion-quotient}. 

\begin{corollary}\label{cor:interval cover}
    Let $P$ be a finite poset and $\mathscr{I}_{P}$ the set of isomorphism classes of interval modules. 
    For a given $k[P]$-module $M$, we take its interval cover $f\colon X=\bigoplus_{i=1}^m X_i \to M$, where all the $X_i$'s are interval modules. 
    Then, the following holds. 
    \begin{enumerate}[\rm (1)]
        \item $f$ is surjective. 
        \item $f|_{X_i}\colon X_i\to M$ is injective for every $i\in \{1,\ldots,m\}$. 
        \item $\supp X = \supp M$. 
    \end{enumerate}
     In particular, every $X_i$ can be taken as an interval submodule of $M$.
\end{corollary}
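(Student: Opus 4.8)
The plan is to derive Corollary \ref{cor:interval cover} directly from Theorem \ref{thm:resolusion-quotient} by verifying that the hypotheses apply to the subcategory $\mathcal{X} = \add \mathscr{I}_P$ inside $\mod k[P]$. First I would note that $\add \mathscr{I}_P$ is by construction closed under isomorphisms, direct sums and direct summands, so $\add \mathscr{I}_P = \add(\add \mathscr{I}_P)$. By Proposition \ref{interval-subquot}, $\add \mathscr{I}_P$ is closed under quotients of indecomposable modules (in fact also under submodules, but only the quotient property is needed here). Moreover, every indecomposable projective $k[P]$-module is of the form $e_a k[P] \cong k_{I}$ for the interval $I = \{x \in P \mid x \geq a\}$, so $\add \mathscr{I}_P$ contains all indecomposable projective modules, hence all projective modules. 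Finally, since $P$ is finite, $\mathscr{I}_P$ is a finite collection of indecomposables, so every $k[P]$-module admits a right minimal $(\add \mathscr{I}_P)$-approximation; thus the interval cover $f \colon X \to M$ exists and we are in the situation of Theorem \ref{thm:resolusion-quotient}.

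Next I would apply Theorem \ref{thm:resolusion-quotient} verbatim: part (1) of that theorem gives that the restriction of $f$ to each indecomposable direct summand $X_i$ is injective, which is exactly assertion (2) of the corollary; the "moreover" clause (applicable since $\add \mathscr{I}_P$ contains all projective modules) gives that $f$ is surjective, which is assertion (1), and that $\supp X = \supp M$, which is assertion (3). The only remaining point is the final sentence: that each $X_i$ can be taken as an interval submodule of $M$. This follows because $X_i$ is an interval module (it lies in $\mathscr{I}_P$ up to isomorphism) and the map $f|_{X_i} \colon X_i \to M$ is an injective homomorphism of $k[P]$-modules by part (2); hence its image $\Image(f|_{X_i}) \subseteq M$ is a submodule of $M$ isomorphic to $X_i$, and so is itself an interval submodule of $M$. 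Replacing each $X_i$ by this image (and $f$ by the corresponding inclusions assembled together) exhibits the interval cover as built from interval submodules of $M$.

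I do not anticipate a serious obstacle here, since essentially all the work has been front-loaded into Theorem \ref{thm:resolusion-quotient} and Proposition \ref{interval-subquot}; the corollary is a matter of checking that the abstract hypotheses specialize correctly. The one place requiring a small remark is the assertion that $\add \mathscr{I}_P$ contains all indecomposable projective (and injective) $k[P]$-modules --- this is standard for incidence algebras of posets, as the indecomposable projective at $a$ is supported on the principal up-set $\{x \geq a\}$, which is an interval, and dually for injectives --- but it is already recalled in the text preceding the corollary, so I would simply cite it. Thus the proof reduces to one or two lines invoking Proposition \ref{interval-subquot} and Theorem \ref{thm:resolusion-quotient}, followed by the short image-is-a-submodule argument for the last claim.
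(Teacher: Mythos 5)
Your proposal is correct and follows essentially the same route as the paper's own proof: both check that $\add\mathscr{I}_P$ satisfies the hypotheses of Theorem~\ref{thm:resolusion-quotient} (closure under quotients of indecomposables via Proposition~\ref{interval-subquot}, and containing all projectives), and then read off assertions (1)–(3). The only cosmetic difference is that the paper cites the surjectivity of the approximation directly from the general fact recalled in Subsection~\ref{subsec:poset}, whereas you extract it from the ``moreover'' clause of Theorem~\ref{thm:resolusion-quotient} — these are the same underlying argument; your extra sentence spelling out why each $X_i$ may be replaced by its image in $M$ is a harmless clarification.
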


\begin{proof}
    Let $f\colon X \to M$ be an interval cover, that is, a right minimal $(\add \mathscr{I}_P)$-approximation of $M$. Then, it is surjective since $\mathscr{I}_{P}$ contains all indecomposable projective modules; see Subsection \ref{subsec:poset}.
    By Proposition \ref{interval-subquot}, we can apply Theorem \ref{thm:resolusion-quotient}
    with $\mathcal{X}=\add \mathscr{I}_P$ and obtain the assertions (2) and (3).
\end{proof}

\begin{remark}
    \label{rem:asashiba}
We note that Corollary~\ref{cor:interval cover} is essentially the same as \cite[Proposition 4.8]{asashibakoszul2023}. To see this,  
as a consequence of \cite[Proposition 4.8]{asashibakoszul2023}
we can deduce that 
$\begin{tikzcd}g : \bigoplus_{I \in S} X \rar{g_I} & M\end{tikzcd}$
is a right interval approximation of $M$, where $S$ is the set of all interval submodules of $M$, and $g_I$ are the corresponding inclusions (In fact, by \cite[Remark 4.9]{asashibakoszul2023}, a smaller set $S$ can be chosen). 
Then, some subset $S' \subseteq S$ gives 
the right minimal version (an interval cover)
$\begin{tikzcd}g':\bigoplus_{I \in S'} X \rar{g_I}& M\end{tikzcd}$
of $M$, from which our result follows. 
\end{remark}

In addition, we give an application to convex full subposets. 
Let $P'$ be a convex full subposet of $P$. In this situation, it is easy to see that the incidence algebra $k[P']$ of $P'$ can be written as $k[P']\cong k[P]/\langle 1- e\rangle$, where $e=\sum_{x\in P'} e_x$. 
Thus, we can regard $\mod k[P']$ as a full subcategory of $\mod k[P]$. 
In addition, since $P'$ is convex in $P$, every interval of $P'$ is an interval of $P$, that is, $\mathbb{I}(P')\subset \mathbb{I}(P)$. 
Furthermore, it is compatible with the map sending an interval to its associated interval module, in the sense that the following diagram commutes
\begin{equation}\nonumber
    \xymatrix{
    \mathbb{I}(P) \ar[r] \ar@{}[d]|{\bigcup} & 
    \mathscr{I}_P  \ar@{}[d]|{\bigcup} \\
    \mathbb{I}(P')  \ar[r] \ar@{}[ru]|{\circlearrowleft} & \mathscr{I}_{P'}
    }
\end{equation}

\begin{lemma}\label{lem:support}
    In the above, we have 
    \begin{equation}\nonumber
        \add \mathscr{I}_{P'} = \add \mathscr{I}_{P}\cap \mod k[P']. 
    \end{equation}
\end{lemma}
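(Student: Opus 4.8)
The plan is to prove the two inclusions separately, after first rephrasing the right-hand side in a more convenient form. Since $\mod k[P']$ is identified with the full subcategory of $\mod k[P]$ consisting of modules $N$ with $\supp(N)\subseteq P'$, and since both $\add\mathscr{I}_P$ and $\mod k[P']$ are closed under direct sums and direct summands in the Krull--Schmidt category $\mod k[P]$, the intersection $\add\mathscr{I}_P\cap\mod k[P']$ is itself closed under direct sums and summands. Hence a module lies in it precisely when it is isomorphic to a direct sum of interval $k[P]$-modules each of whose support is contained in $P'$; in other words
\[
\add\mathscr{I}_P\cap\mod k[P']=\add\{\,k_I \mid I\in\mathbb{I}(P),\ I\subseteq P'\,\}.
\]
So the lemma reduces to the combinatorial identity $\{\,I\in\mathbb{I}(P)\mid I\subseteq P'\,\}=\mathbb{I}(P')$, together with the (already noted) compatibility of the assignment $I\mapsto k_I$ with the inclusion $\mod k[P']\hookrightarrow\mod k[P]$.

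For the inclusion $\add\mathscr{I}_{P'}\subseteq\add\mathscr{I}_P\cap\mod k[P']$, I would use that $P'$ is convex in $P$: this gives $\mathbb{I}(P')\subseteq\mathbb{I}(P)$ (stated just before the lemma), and for $I\in\mathbb{I}(P')$ we clearly have $\supp(k_I)=I\subseteq P'$, so $k_I\in\add\mathscr{I}_P\cap\mod k[P']$; closure under direct sums then finishes this direction. For the reverse inclusion, I would take $I\in\mathbb{I}(P)$ with $I\subseteq P'$ and check $I\in\mathbb{I}(P')$. Convexity of $I$ in $P'$ is automatic: if $x,y\in I$ and $z\in P'$ with $x<z<y$, then $z\in P$ satisfies $x<z<y$, so $z\in I$ by convexity of $I$ in $P$. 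Connectedness of $I$ is intrinsic to $I$: the partial order on $I$ induced from $P'$ agrees with the one induced from $P$ (because $P'$ carries the order induced from $P$), hence so does $\Hasse(I)$, and $I$ is connected by hypothesis. Therefore $I\in\mathbb{I}(P')$ and $k_I\in\add\mathscr{I}_{P'}$; applying this to each interval summand of an arbitrary $N\in\add\mathscr{I}_P\cap\mod k[P']$ (whose supports are contained in $\supp(N)\subseteq P'$) gives $N\in\add\mathscr{I}_{P'}$.

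I do not anticipate a serious obstacle here; the only point requiring a little care is the bookkeeping of the identification $\mod k[P']\subseteq\mod k[P]$ and the verification that the interval module $k_I$ is "the same" module whether built over $P'$ or over $P$ once $I\subseteq P'$ is convex in $P$ — which is exactly the content of the commuting square displayed before the lemma statement. Everything else is the elementary observation that convexity and connectedness of a subposet contained in $P'$ are inherited from $P$ to $P'$.
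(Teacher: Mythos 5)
Your proof is correct and follows essentially the same route as the paper's, which simply asserts in one sentence that, by convexity of $P'$, the intervals (and interval modules) of $P'$ are exactly those of $P$ with support contained in $P'$. You have merely spelled out the two inclusions of that combinatorial identity and the Krull--Schmidt bookkeeping that the paper leaves implicit.
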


\begin{proof}
    Since $P'$ is convex, the set of intervals (resp., interval modules) of $P'$ is nothing but the set of intervals (interval modules) of $P$ whose support are contained in $P'$. Thus, we get the assertion. 
\end{proof}

Therefore, we obtain the following result. 

\begin{theorem}\label{thm:conv-resolution}
Let $P$ be a finite poset and $P'$ a convex full subposet of $P$. Then, the following statements hold for any $k[P']$-module $M$. 
\begin{enumerate}[\rm (1)]
\item A right minimal $(\add \mathscr{I}_{P'})$-approximation
  (interval cover over $P'$) of $M$
  is exactly a right minimal $(\add \mathscr{I}_{P})$-approximation
  (interval cover over $P$)
  of $M$.
\item A right minimal $\mathscr{I}_{P'}$-resolution
  (interval resolution over $P'$)
  of $M$ is exactly a minimal right $\mathscr{I}_{P}$-resolution
  (interval resolution over $P$)
  of $M$.
\item The interval resolution dimension of $M$ over $\mod k[P']$ is the same as that over $\mod k[P]$.
\end{enumerate}
\end{theorem}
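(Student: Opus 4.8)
The plan is to obtain all three statements as an immediate specialization of Proposition~\ref{prop:resolution-supportalg}, once the combinatorial set-up of Section~\ref{sec:convex} is put in the right form. Concretely, I would set $A := k[P]$ and let $e := \sum_{x\in P'} e_x \in k[P]$; as already noted in Section~\ref{sec:convex}, the convexity of $P'$ gives an isomorphism $k[P'] \cong k[P]/\langle 1-e\rangle =: B$, under which $\mod k[P']$ is identified with the full subcategory of $\mod k[P]$ consisting of the modules supported on $P'$ (the subcategory $\mathcal{S}_e$ of the earlier discussion). Thus the standing hypotheses needed to invoke Proposition~\ref{prop:resolution-supportalg} with $\mathscr{X} := \mathscr{I}_P$ are in place: by Proposition~\ref{interval-subquot} the subcategory $\add \mathscr{I}_P$ is closed under quotients of indecomposable modules, and $\mathscr{I}_P$ contains every indecomposable projective $k[P]$-module by construction.

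The only point that requires a separate argument is the identification of the ``restricted'' collection $\bar{\mathscr{X}} = \mathscr{I}_P \cap \mod k[P']$ with $\mathscr{I}_{P'}$. This is precisely Lemma~\ref{lem:support}: because $P'$ is convex in $P$, the intervals of $P'$ are exactly the intervals of $P$ whose underlying set lies in $P'$, and the assignment $I \mapsto k_I$ is compatible with restriction (the commuting square displayed before Lemma~\ref{lem:support}). Hence $\add \bar{\mathscr{X}} = \add \mathscr{I}_{P'}$, so that $\bar{\mathscr{X}}$-approximations, $\bar{\mathscr{X}}$-resolutions, and $\bar{\mathscr{X}}$-resolution dimension are literally interval covers over $P'$, interval resolutions over $P'$, and $\intresdim(-)$ over $\mod k[P']$, respectively.

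With these identifications, statements (1), (2), (3) of the theorem are nothing but statements (1), (2), (3) of Proposition~\ref{prop:resolution-supportalg} applied to the $B$-module $M$: a right minimal $(\add \mathscr{I}_{P'})$-approximation of $M$ as a $k[P']$-module is the same morphism as a right minimal $(\add \mathscr{I}_{P})$-approximation of $M$ regarded as a $k[P]$-module, and similarly for the whole minimal resolution diagram~\eqref{resolution of M} and for its length. I would also remark in passing that the finiteness of $\intresgldim k[P]$ (Theorem~\ref{intgldim-finite}) is what makes the final inequality $\intresgldim k[P'] \le \intresgldim k[P]$ of Proposition~\ref{prop:resolution-supportalg} substantive in this setting, although for (1)--(3) finiteness is not needed.

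I do not foresee a real obstacle, since the content was already isolated in Section~\ref{sec3}; the write-up is essentially a dictionary translation. The one subtlety worth flagging is that convexity of $P'$ is used twice and essentially: once to get $k[P'] \cong k[P]/\langle 1-e\rangle$ as algebras (so that $\mod k[P']$ sits inside $\mod k[P]$ as a full subcategory cut out by a support condition), and once, via Lemma~\ref{lem:support}, to ensure that no intervals are lost or gained under restriction. Dropping convexity breaks both identifications, and correspondingly the statement of the theorem fails.
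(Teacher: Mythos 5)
Your proposal is correct and follows essentially the same route as the paper: identify $k[P'] \cong k[P]/\langle 1-e\rangle$ using convexity, invoke Lemma~\ref{lem:support} to match $\mathscr{I}_{P'}$ with $\mathscr{I}_P \cap \mod k[P']$, and then apply Proposition~\ref{prop:resolution-supportalg} (whose hypotheses hold via Proposition~\ref{interval-subquot}). Your additional remarks—on where convexity enters and on the role of finiteness—are accurate but not new content.
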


\begin{proof}
    Suppose that $P'$ be a convex full subposet of $P$. 
    We have already seen that the incidence algebra $k[P']$ is of the form $k[P']\cong k[P]/\langle 1-e \rangle$, where $e = \sum_{x\in P'}e_x$. 
    By Lemma \ref{lem:support}, we have $\mathscr{I}_{P'} = \mathscr{I}_P \cap \mod k[P]$. In this situation, we can apply Proposition \ref{prop:resolution-supportalg} to $\mathscr{I}_{P'}$ and get the desired assertions (1)-(3). 
\end{proof}

\begin{corollary}\label{convex-mono}
    For any convex full subposet $P'$ of $P$, we have 
    \begin{equation}\nonumber
    \intresgldim k[P'] \leq
    \intresgldim k[\P].
    \end{equation}
\end{corollary}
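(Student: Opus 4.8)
The plan is to deduce Corollary \ref{convex-mono} directly from Theorem \ref{thm:conv-resolution}. The key point is that part (3) of that theorem equates, for every $k[P']$-module $M$, the interval resolution dimension $\intresdim_{P'} M$ computed over $\mod k[P']$ with the interval resolution dimension $\intresdim_{P} M$ computed over $\mod k[P]$ (where $M$ is viewed as a $k[P]$-module via the identification $k[P'] \cong k[P]/\langle 1-e\rangle$ with $e = \sum_{x\in P'} e_x$). Taking the supremum over all $M \in \mod k[P']$ of the left-hand side gives $\intresgldim k[P']$ by definition.

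First I would write $\intresgldim k[P'] = \sup\{\intresdim_{P'} M \mid M \in \mod k[P']\}$, then rewrite each term using Theorem \ref{thm:conv-resolution}(3) as $\intresdim_{P} M$, so that the supremum becomes $\sup\{\intresdim_{P} M \mid M \in \mod k[P']\}$. Now every $M \in \mod k[P']$ is in particular a $k[P]$-module, so this last supremum is taken over a subset of all $k[P]$-modules and is therefore bounded above by $\sup\{\intresdim_{P} N \mid N \in \mod k[P]\} = \intresgldim k[P]$. Chaining these (in)equalities yields $\intresgldim k[P'] \leq \intresgldim k[P]$, which is exactly the claim. Alternatively, one could invoke Proposition \ref{prop:resolution-supportalg} directly (with $\mathscr{X} = \mathscr{I}_P$, $B = k[P']$, $\bar{\mathscr{X}} = \mathscr{I}_{P'}$ via Lemma \ref{lem:support}), since that proposition already states $\bar{\mathscr{X}}\resgldim B \le \mathscr{X}\resgldim A$; but routing through Theorem \ref{thm:conv-resolution} keeps the argument self-contained within this subsection.

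There is essentially no obstacle here: the substantive work has already been done in Theorem \ref{thm:resolusion-quotient} (support containment forces the interval cover of a $k[P']$-module to stay in $\add\mathscr{I}_{P'}$) and in Proposition \ref{prop:resolution-supportalg}. The only things to be careful about are bookkeeping: making sure that the restriction functor $\mod k[P'] \hookrightarrow \mod k[P]$ is exact and fully faithful (so that ``right minimal approximation'' is an absolute notion and syzygies match), and that the finiteness assertion of Theorem \ref{intgldim-finite} guarantees both sides are genuine integers rather than $\infty$, so the inequality is meaningful. Since convexity of $P'$ is precisely what makes $\mathbb{I}(P') \subseteq \mathbb{I}(P)$ and hence $\mathscr{I}_{P'} = \mathscr{I}_P \cap \mod k[P']$, all hypotheses of the cited results are in force, and the corollary follows in one line.
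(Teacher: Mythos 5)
Your proposal is correct and takes essentially the same route as the paper: invoke Theorem \ref{thm:conv-resolution}(3) to identify the interval resolution dimension of each $k[P']$-module with that of the same module viewed over $k[P]$, take the supremum, and note the finiteness from Theorem \ref{intgldim-finite} so the inequality is meaningful. The paper's own proof is just a terser version of your first paragraph, and your mention of the alternate route via Proposition \ref{prop:resolution-supportalg} is a harmless aside.
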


\begin{proof}
    Recall that the interval resolution global dimension is finite (Theorem \ref{intgldim-finite}). 
    Then, the assertion is immediate from Theorem \ref{thm:conv-resolution}(3).
\end{proof}


\section{Monotonicity on interval resolution global dimension}\label{sec4}

In this section, we show the monotonicity of interval resolution global dimension of posets with respect to inclusion. 
Our main result is the following, which generalizes Corollary \ref{convex-mono} to not necessarily convex posets. 

\begin{theorem}\label{thm:monotone subposet}
    Let $P$ be a finite poset. For any (not necessarily convex) full subposet $P'$ of $P$, the following inequality holds.
    \begin{equation}\nonumber
        \intresgldim k[P'] \leq \intresgldim k[P]. 
    \end{equation}
\end{theorem}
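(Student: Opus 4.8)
The strategy is to reduce the case of an arbitrary full subposet $P'\subseteq P$ to the already-established convex case (Corollary \ref{convex-mono}) by interposing the convex hull. Writing $Q:=\conv_P(P')$ for the smallest convex full subposet of $P$ containing $P'$, we have a chain $P'\subseteq Q\subseteq P$ of full subposet inclusions, with $Q$ convex in $P$ but $P'$ only assumed full (indeed $P'$ is convex inside $Q$ precisely when $Q=\conv_P(P')$ forces nothing extra — in general $P'$ is \emph{not} convex in $Q$ either). So the convex machinery of Section \ref{sec:convex} handles $Q\subseteq P$ directly, giving $\intresgldim k[Q]\le \intresgldim k[P]$, and the remaining and genuinely new work is to compare $P'$ with $Q$, i.e.\ to prove monotonicity for a full subposet inside a connected/ambient poset that it generates as a convex hull. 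Thus without loss of generality I would assume from the outset that $\conv_P(P')=P$, i.e.\ that $P$ is the convex hull of $P'$.

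The main tool, as announced in the introduction, is the functor $\Theta\colon \mod k[P']\to \mod k[P]$ built from idempotent embedding functors (Section \ref{sec:idempotent}), together with the restriction functor $\res\colon \mod k[P]\to \mod k[P']$ induced by the idempotent $e=\sum_{x\in P'}e_x$. The key facts I would establish about this pair are: (i) $\res\circ\Theta\cong 1_{\mod k[P']}$; (ii) $\res$ is exact and sends $\add\mathscr{I}_P$ into $\add\mathscr{I}_{P'}$ — restricting an interval module $k_I$ over $P$ to $P'$ yields $k_{I\cap P'}$, which is an interval module over $P'$ because $I\cap P'$ is convex in $P'$ (intervals may become disconnected on restriction, but when $P=\conv_P(P')$ the relevant intervals restrict to genuine intervals, or more carefully one argues summand-wise); and (iii) $\Theta$ sends $\add\mathscr{I}_{P'}$ into $\add\mathscr{I}_P$, which is exactly the content of Lemma \ref{Theta->int} asserting that $\Theta$ realizes the convex hull operation: $\Theta(k_J)\cong k_{\conv_P(J)}$ for an interval $J$ of $P'$, and $\conv_P(J)$ is an interval of $P$. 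One also needs $\Theta$ to be exact (or at least to carry the relevant short exact sequences to short exact sequences), which follows from its description via idempotent embeddings.

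Granting these, the comparison of interval resolutions is formal. Given a $k[P']$-module $M$ with a right minimal $\mathscr{I}_{P'}$-resolution of length $m$,
\begin{equation}\nonumber
    0\longrightarrow J_m\longrightarrow \cdots\longrightarrow J_1\longrightarrow J_0\longrightarrow M\longrightarrow 0,
\end{equation}
apply $\Theta$ to obtain an exact sequence of $k[P]$-modules with all terms $\Theta(J_i)\in\add\mathscr{I}_P$; hence $\intresdim_P \Theta(M)\le m$ (using that resolution dimension is the infimum over all, not necessarily minimal, interval resolutions — cf.\ the Remark after the definition, and \cite[Proposition 3.9]{asashiba2023approximation}). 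Conversely, take a right minimal interval resolution of $\Theta(M)$ over $P$ of length $\le \intresgldim k[P]$ and apply the exact functor $\res$; every term lands in $\add\mathscr{I}_{P'}$ by (ii), and the resulting complex is an interval resolution of $\res\,\Theta(M)\cong M$ by (i). Therefore
\begin{equation}\nonumber
    \intresdim_{P'} M \;\le\; \intresdim_P \Theta(M)\;\le\;\intresgldim k[P],
\end{equation}
and taking the supremum over $M\in\mod k[P']$ gives $\intresgldim k[P']\le\intresgldim k[P]$. Combining with the convex-hull step $\intresgldim k[Q]\le\intresgldim k[P]$ finishes the general case.

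\textbf{Expected main obstacle.} The crux is Lemma \ref{Theta->int}: verifying that $\Theta$ really does compute the convex hull at the level of modules — i.e.\ that $\Theta(k_J)$ is the interval module $k_{\conv_P(J)}$ and in particular is indecomposable and interval, not merely interval-decomposable — and dually that $\res$ behaves well on interval modules when $P'$ sits inside $P=\conv_P(P')$. The subtlety is that restricting an interval module of $P$ to a non-convex $P'$ can disconnect its support, so step (ii) requires care: one must check that the interval summands $J_i$ appearing, or rather the modules $\res(\text{interval over }P)$, actually decompose into interval modules over $P'$; this is where the hypothesis that we have reduced to $P=\conv_P(P')$, or an induction peeling off one element at a time, is used. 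I would expect the cleanest route is the one-element induction: show the result when $P'=P\setminus\{p\}$ for a single element $p$, handling separately the cases where $p$ is or is not in the convex hull of the rest, and then iterate; the single-element case is where all the representation-theoretic content of $\Theta$ and $\res$ is concentrated.
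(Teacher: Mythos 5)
Your overall strategy matches the paper's: interpose the pair of functors $\res$ and $\Theta$, use that $\res$ is exact, $\res\circ\Theta\cong 1_{\mod k[P']}$, $\res$ sends $\add\mathscr{I}_P$ into $\add\mathscr{I}_{P'}$, and $\Theta$ sends interval modules to interval modules (via the convex hull). This correctly identifies the key lemmas (Propositions \ref{res-Theta}, \ref{Theta->int}, and Lemma \ref{res->int}). However, there are two substantive problems and one minor one.

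First, the claim that ``one also needs $\Theta$ to be exact \dots which follows from its description via idempotent embeddings'' is false: the paper states explicitly, just after defining $\Theta$, that ``$\Theta$ is not exact nor right/left exact in general.'' In your writeup this claim is only used to get $\intresdim_P\Theta(M)\le m$, which is actually not needed for your final chain of inequalities, so this error happens to sit in a dead branch of the argument — but it is a wrong statement and should not be asserted.

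Second, and this is the genuine gap: in the step that \emph{is} used, you apply the exact functor $\res$ to a right minimal $\mathscr{I}_P$-resolution of $\Theta(M)$ and declare the resulting complex an ``interval resolution of $M$.'' What you get is an exact sequence of $k[P']$-modules with all terms in $\add\mathscr{I}_{P'}$ ending at $M$ — but that by itself is not an $\mathscr{I}_{P'}$-resolution in the sense needed by Remark 2.3; one needs the approximation property (i.e.\ $\Hom(\add\mathscr{I}_{P'},-)$-exactness) at each stage, and there is no reason $\res$ preserves that, since $\res$ is only a left/right adjoint to $\coind/\iind$, which do \emph{not} preserve interval-decomposability. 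Note also that $\add\mathscr{I}_P$ is not a resolving subcategory (it is not closed under kernels of epimorphisms — take an injective envelope of a non-interval-decomposable module), so the standard argument that any exact sequence with terms in $\add\mathcal X$ bounds $\mathcal X$-resolution dimension does not apply. This is precisely the difficulty that the paper's Condition~\ref{conditionast} and Lemma~\ref{sectionpair} are designed to overcome: they track pairs of morphisms $(\alpha\colon \Theta(M)\to X,\ \beta\colon\res(X)\to M)$ satisfying $\beta\circ\res(\alpha)\circ\Phi_M=1_M$, and show inductively that such a pair descends to the minimal syzygies, so that $\Omega^m_{\mathscr{I}_{P'}}M$ is a direct summand of $\res(\Omega^m_{\mathscr{I}_P}\Theta(M))$ for all $m$, which is exactly what you need and more robust than the ``apply $\res$ to the resolution'' shortcut.

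Finally, the opening reduction to $\conv_P(P')=P$ is harmless but unnecessary (the paper's argument does not use it), and the parenthetical claim that under this reduction ``the relevant intervals restrict to genuine intervals'' is incorrect — even when $P=\conv_P(P')$, restricting an interval module to $P'$ can disconnect its support (e.g.\ $P'=\{1,3\}$ inside $P=\{1<2<3\}$). Your fallback of arguing summand-wise is what Lemma~\ref{res->int} does, and is what saves this point.
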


Such monotonicity is interesting because it does not hold for the (usual) global dimension in general. 
In fact, we have the following example due to \cite{igusa1990cohomology}.

\begin{example}
  Let $P$ and $P'$ be posets given by
  \begin{equation*}
    P:
    \begin{tikzpicture}[baseline=0mm]
      \node(c) at(0,0) {$\bullet$};
      \node(t) at(0,1) {$\bullet$};
      \node(lt) at(-0.75,0.5) {$\bullet$};
      \node(rt) at(0.75,0.5) {$\bullet$};
      \node(b) at(0,-1) {$\bullet$};
      \node(lb) at(-0.75,-0.5) {$\bullet$};
      \node(rb) at(0.75,-0.5) {$\bullet$};
      \draw[->] (t)--(lt);
      \draw[->] (t)--(rt);
      \draw[->] (lt)--(c);
      \draw[->] (rt)--(c);
      \draw[<-] (b)--(lb);
      \draw[<-] (b)--(rb);
      \draw[<-] (lb)--(c);
      \draw[<-] (rb)--(c);
    \end{tikzpicture} \quad \text{and} \quad
    P':
    \begin{tikzpicture}[baseline=0mm]
      \node(t) at(0,1) {$\bullet$};
      \node(lt) at(-0.75,0.5) {$\bullet$};
      \node(rt) at(0.75,0.5) {$\bullet$};
      \node(b) at(0,-1) {$\bullet$};
      \node(lb) at(-0.75,-0.5) {$\bullet$};
      \node(rb) at(0.75,-0.5) {$\bullet$};
      \draw[->] (t)--(lt);
      \draw[->] (t)--(rt);
      \draw[->] (lt)--(lb);
      \draw[->] (lt)--(rb);
      \draw[->] (rt)--(lb);
      \draw[->] (rt)--(rb);
      \draw[<-] (b)--(lb);
      \draw[<-] (b)--(rb);
    \end{tikzpicture} 
  \end{equation*}
  respectively.
  Then, $P'$ is a full subposet of $P$, which is obtained by removing the point in the center.
  The global dimension of $k[P]$ is $2$ but that of $k[P']$ is $3$ (over an arbitrary field), see \cite[Section 3]{igusa1990cohomology}.
\end{example}

\begin{remark}
  \label{rem:nonfull}
  We note that if $P'$ is a subposet of $P$ but is not a full subposet,
  the inequality $\intresgldim k[P'] \leq \intresgldim k[P]$ may not hold.
  In fact, let $P'$ be any finite poset with $\intresgldim k[P'] > 0$.
  Then, it is well-known that there exists a total order on all the elements of $P'$
  compatible with the partial order of $P'$ (a linear extension of the partial order).
  Then, taking $P$ to be the elements of $P'$ together with this total order, $P'$ is a subposet of $P$.
  However, as we have seen in Example~\ref{Example:Glintdim=1}{\rm(1)}, $\intresgldim k[P] = 0$ since (the Hasse diagram of) $P$ is just the
  equioriented $A_n$-type quiver.
\end{remark}

\subsection{Results on resolution dimensions}\label{sec:sectionpair}
Let $A$, $B$ be finite dimensional $k$-algebras. Suppose that $\mathscr{X}$ (resp., $\mathscr{Y}$) is a finite collection of indecomposable $A$-modules (resp., $B$-modules) containing all indecomposable projective modules. In this situation, we may consider $\mathscr{X}\resgldim A$ and $\mathscr{Y}\resgldim B$ in the respective module categories.

We study resolution dimensions under the following assumption.

\begin{assumption}\label{niceFun}
  There are $k$-linear functors
  ${\rm F}\colon \mod A \to \mod B$ and ${\rm G}\colon \mod B \to \mod A$
  such that $\rm F$ is exact, there exists a natural equivalence
  $\Phi\colon 1_{\mod B} \overset{\sim}{\to} {\rm F}\circ {\rm G}$,
  and they restrict to functors between $\add \mathscr{X}$ and $\add \mathscr{Y}$, that is
\begin{equation}\nonumber
    \xymatrix{
    \mod A \ar@{->}@/_3mm/[rr]^{\rm F} \ar@{}[d]|{\rotatebox{0}{$\cup$}} & & \mod B \ar@{->}@/_3mm/[ll]_{\rm G} \ar@{}[d]|{\rotatebox{0}{$\cup$}}\\
    \add \mathscr{X} \ar@{->}@/_3mm/[rr]^{{\rm F}|_{\add \mathscr{X}}} & &\add \mathscr{Y} \ar@{->}@/_3mm/[ll]_{{\rm G}|_{\add \mathscr{Y}}}. 
    }
\end{equation}
\end{assumption}

We show the following.
\begin{theorem}\label{thm:monotone}
  Assume that the $\mathscr{X}$-resolution global dimension of $A$ is finite. If there exist functors ${\rm F},{\rm G}$ in Assumption \ref{niceFun}, then we have
  \begin{equation}\label{Y<X}
    \mathscr{Y}\resgldim B \leq \mathscr{X}\resgldim A.
  \end{equation}
  In particular, the $\mathscr{Y}$-resolution global dimension of $B$ is also finite.
\end{theorem}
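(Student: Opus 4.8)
The plan is to show that $\mathrm{G}$ transports a right minimal $\mathscr{Y}$-resolution problem in $\mod B$ into one in $\mod A$, solve it there using $\mathscr{X}$-resolutions (whose length is bounded by $\mathscr{X}\resgldim A$), and then push the resulting resolution back to $\mod B$ via the exact functor $\mathrm{F}$, using $\Phi$ to recognize the module we started with. First I would fix a $B$-module $N$ and consider $\mathrm{G}(N)\in \mod A$. Since $\mathscr{X}\resgldim A =: d < \infty$, there is a right minimal $\mathscr{X}$-resolution
\begin{equation}\nonumber
0 \longrightarrow J_d \longrightarrow \cdots \longrightarrow J_1 \longrightarrow J_0 \longrightarrow \mathrm{G}(N) \longrightarrow 0
\end{equation}
with each $J_i \in \add\mathscr{X}$ and $d' := \mathscr{X}\resdim \mathrm{G}(N) \le d$. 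Applying $\mathrm{F}$, which is exact, yields an exact sequence
\begin{equation}\nonumber
0 \longrightarrow \mathrm{F}(J_d) \longrightarrow \cdots \longrightarrow \mathrm{F}(J_1) \longrightarrow \mathrm{F}(J_0) \longrightarrow \mathrm{F}(\mathrm{G}(N)) \longrightarrow 0
\end{equation}
in $\mod B$; by Assumption \ref{niceFun} each $\mathrm{F}(J_i) \in \add\mathscr{Y}$, and by the natural equivalence $\Phi\colon 1_{\mod B}\xrightarrow{\sim}\mathrm{F}\circ\mathrm{G}$ we may replace $\mathrm{F}(\mathrm{G}(N))$ by $N$. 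Thus $N$ admits a (not necessarily minimal) $\mathscr{Y}$-resolution of length at most $d'\le d$.

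To conclude, I would invoke the Remark after the definition of $\mathscr{X}\resdim$ (citing \cite[Proposition 3.9]{asashiba2023approximation}), which says that the resolution dimension equals the infimum of lengths of arbitrary (not necessarily minimal) resolutions. Hence $\mathscr{Y}\resdim N \le d' \le d = \mathscr{X}\resgldim A$. Since $N$ was arbitrary, taking the supremum over all $B$-modules gives $\mathscr{Y}\resgldim B \le \mathscr{X}\resgldim A$, which is finite; in particular the $\mathscr{Y}$-resolution global dimension of $B$ is finite.

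The main obstacle, and the reason the argument is structured this way, is that $\mathrm{G}$ need not be exact, so one cannot directly apply $\mathrm{G}$ to a $\mathscr{Y}$-resolution of $N$ and hope to land on a resolution of $\mathrm{G}(N)$; instead the exactness must be used only on the $\mathrm{F}$ side. A secondary point to be careful about is that the sequence obtained after applying $\mathrm{F}$ is a priori only an exact sequence with terms in $\add\mathscr{Y}$, not a \emph{minimal} one, so it is essential to use the characterization of $\resdim$ via arbitrary resolutions rather than trying to argue that $\mathrm{F}$ preserves right minimality (which would be false in general). Everything else — exactness of $\mathrm{F}$, membership of $\mathrm{F}(J_i)$ in $\add\mathscr{Y}$, and the identification via $\Phi$ — is immediate from Assumption \ref{niceFun}.
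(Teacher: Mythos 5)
Your reduction to non-minimal resolutions is where the argument breaks. Applying the exact functor $\mathrm{F}$ to a right minimal $\mathscr{X}$-resolution of $\mathrm{G}(N)$ does produce an exact sequence with terms in $\add\mathscr{Y}$ ending in $N$, but that is not the same thing as a $\mathscr{Y}$-resolution in the sense of \cite[Proposition 3.9]{asashiba2023approximation}: there a (possibly non-minimal) resolution is still required to be $\Hom_B(Y,-)$-exact for every $Y\in\add\mathscr{Y}$, equivalently each map $\mathrm{F}(J_i)\to \mathrm{F}\bigl(\Omega^i_{\mathscr{X}}\mathrm{G}(N)\bigr)$ must be a right $(\add\mathscr{Y})$-approximation. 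Nothing in Assumption \ref{niceFun} gives this: no adjunction between $\mathrm{F}$ and $\mathrm{G}$ is assumed, and in the intended application ($\mathrm{F}=\res_e$, $\mathrm{G}=\Theta$) the left adjoint $\iind_e$ of $\res_e$ does not preserve interval-decomposability, so one cannot expect $\mathrm{F}$ to carry approximations to approximations. Moreover, a mere exact sequence with terms in $\add\mathscr{Y}$ does \emph{not} bound $\mathscr{Y}\resdim$. For instance, take $B=k[x]/(x^3)$, $\mathscr{Y}=\{B,\,k\}$ and $M=B/(x^2)$: the sequence $0\to (x^2)\to B\to M\to 0$ has both outer terms in $\add\mathscr{Y}$ (note $(x^2)\cong k$), yet the right minimal $(\add\mathscr{Y})$-approximation of $M$ is $B\oplus k\to M$ (natural surjection plus socle inclusion) whose kernel is isomorphic to $M$ itself, so $\Omega_{\mathscr{Y}}M\cong M$ and $\mathscr{Y}\resdim M=\infty$. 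Hence the statement your final step relies on is false in the generality in which you invoke it, not merely unjustified.

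The paper avoids this by never transporting a whole resolution through $\mathrm{F}$ at once. It isolates Condition \ref{conditionast} (a retract-type relation $\beta\circ\mathrm{F}(\alpha)\circ\Phi_M=1_M$ for a pair $(X,M)$), proves in Lemma \ref{sectionpair} that this condition is inherited by the pairs of syzygies $(\Omega^m_{\mathscr{X}}X,\Omega^m_{\mathscr{Y}}M)$ --- this is where the approximation property is used on both sides, one syzygy at a time --- and concludes that $\Omega^m_{\mathscr{Y}}M$ is a direct summand of $\mathrm{F}(\Omega^m_{\mathscr{X}}X)$, hence vanishes as soon as $\Omega^m_{\mathscr{X}}X$ does. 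Your observation that $\mathrm{G}$ cannot simply be applied to a resolution because it is not exact is correct, but the fix has to control the minimal syzygies step by step rather than appeal to arbitrary exact sequences with terms in $\add\mathscr{Y}$.
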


To prove Theorem~\ref{thm:monotone},
we first consider pairs of modules satisfying the following condition.
\begin{condition}
  \label{conditionast}
  Under Assumption~\ref{niceFun},
  a pair $(X,M)\in \mod A\times \mod B$ satisfies this Condition
  if and only if
  there is a pair of morphisms $\alpha\colon {\rm G}(M)\to X$ and $\beta \colon {\rm F}(X) \to M$ such that
  \[
    \beta \circ {\rm F}(\alpha) \circ \Phi_M = 1_M.
  \]
\end{condition}
In particular, we observe that if Condition~\ref{conditionast} holds, then $\beta$ is a split epimorphism and thus
$M$ is a direct summand of ${\rm F}(X)$.

\begin{lemma}\label{sectionpair}
  Under Assumption~\ref{niceFun}, if the pair $(X,M)\in \mod A\times \mod B$ satisfies Condition~\ref{conditionast},
  then so does the pair $(\Omega^{m}_{\mathscr{X}}X, \Omega^{m}_{\mathscr{Y}} M) \in \mod A \times \mod B$ for any non-negative integer $m$.
\end{lemma}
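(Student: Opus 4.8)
The plan is to prove the statement by induction on $m$, where the base case $m=0$ is trivial since $(\Omega^0_{\mathscr{X}}X, \Omega^0_{\mathscr{Y}}M) = (X,M)$, which satisfies Condition~\ref{conditionast} by hypothesis. For the inductive step, it clearly suffices to treat the case $m=1$: that is, I want to show that if $(X,M)$ satisfies Condition~\ref{conditionast}, then so does $(\Omega_{\mathscr{X}}X, \Omega_{\mathscr{Y}}M)$, and then iterate. So fix $\alpha\colon {\rm G}(M)\to X$ and $\beta\colon {\rm F}(X)\to M$ with $\beta\circ {\rm F}(\alpha)\circ \Phi_M = 1_M$.

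The key idea is to compare the defining short exact sequences of the syzygies. Let $g\colon J\to X$ be the right minimal $(\add\mathscr{X})$-approximation of $X$ with kernel $\iota\colon \Omega_{\mathscr{X}}X\hookrightarrow J$, and let $g'\colon J'\to M$ be the right minimal $(\add\mathscr{Y})$-approximation of $M$ with kernel $\iota'\colon \Omega_{\mathscr{Y}}M\hookrightarrow J'$. First I would apply the exact functor ${\rm F}$ to the sequence $0\to \Omega_{\mathscr{X}}X\to J\to X\to 0$, obtaining an exact sequence $0\to {\rm F}(\Omega_{\mathscr{X}}X)\to {\rm F}(J)\to {\rm F}(X)\to 0$ in $\mod B$; crucially, ${\rm F}(J)\in \add\mathscr{Y}$ by Assumption~\ref{niceFun}. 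Composing ${\rm F}(g)$ with $\beta$ gives a morphism ${\rm F}(J)\to M$ from an object of $\add\mathscr{Y}$, so it factors through the approximation $g'\colon J'\to M$; say ${\rm F}(g)$ composed with $\beta$ equals $g'\circ s$ for some $s\colon {\rm F}(J)\to J'$. I also need to go the other direction: since $g'\colon J'\to M$ is a (surjective) approximation and ${\rm F}(J)\to M$ is surjective, standard diagram-chasing lets me lift $g'$ through ${\rm F}(g)$ — or, more carefully, I build a morphism $J'\to {\rm F}(J)$ using the split nature of $\beta$ together with $\Phi$ and $\alpha$. The upshot I am aiming for is a commutative diagram relating the two short exact sequences, which by the snake lemma (or simply by restricting morphisms to kernels) induces morphisms between $\Omega_{\mathscr{X}}X$-related objects and $\Omega_{\mathscr{Y}}M$.

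More precisely, the candidate data for Condition~\ref{conditionast} at level one should be: $\alpha_1\colon {\rm G}(\Omega_{\mathscr{Y}}M)\to \Omega_{\mathscr{X}}X$ obtained by applying ${\rm G}$ to $\iota'$ and then composing with an induced map into $\Omega_{\mathscr{X}}X$ coming from a lift ${\rm G}(J')\to J$ (which exists because $J\to X$ is an approximation and one can produce a map ${\rm G}(J')\to X$ via ${\rm G}(g')$ and $\alpha$), and $\beta_1\colon {\rm F}(\Omega_{\mathscr{X}}X)\to \Omega_{\mathscr{Y}}M$ obtained from the factorization $s\colon {\rm F}(J)\to J'$ above by restriction to kernels (using that $s$ sends ${\rm F}(\Omega_{\mathscr{X}}X) = \Ker {\rm F}(g)$ into $\Ker g' = \Omega_{\mathscr{Y}}M$, which holds because $\beta\circ{\rm F}(g)$ kills ${\rm F}(\Omega_{\mathscr{X}}X)$ and equals $g'\circ s$). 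The final identity $\beta_1\circ {\rm F}(\alpha_1)\circ \Phi_{\Omega_{\mathscr{Y}}M} = 1_{\Omega_{\mathscr{Y}}M}$ should then follow by a diagram chase: one checks that the composite, post-composed with the monomorphism $\iota'$, equals $\iota'$, using the level-zero identity $\beta\circ {\rm F}(\alpha)\circ \Phi_M = 1_M$ together with naturality of $\Phi$ (so that $\Phi$ commutes with all the structure maps ${\rm G}(J')\to {\rm G}(M)$, etc.), and then cancels $\iota'$ since it is monic.

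The main obstacle I anticipate is the careful bookkeeping of the lifts and the verification that they restrict correctly to the syzygies, together with checking the final identity. The existence of $s$ and of the map ${\rm G}(J')\to J$ is guaranteed by the approximation property, but these lifts are not unique, so I must make sure the particular composites I form actually land in the right subobjects and that the naturality square for $\Phi$ is invoked at precisely the right place; this is where a wrong choice could break the argument. A secondary subtlety is that $g'$ being \emph{right minimal} is not needed for this lemma (only that it is an approximation), so I should be careful not to accidentally rely on minimality; conversely, I may want to note that once Condition~\ref{conditionast} is established, right minimality will be used later (in the proof of Theorem~\ref{thm:monotone}) to conclude the inequality on resolution dimensions. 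I would end the proof by remarking that iterating the $m=1$ case $m$ times gives the statement for arbitrary $m\ge 0$.
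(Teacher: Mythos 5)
The construction you propose for $\alpha_1$ and $\beta_1$ is the same as the paper's: lift $\alpha\circ {\rm G}(f)$ through the approximation of $X$ to get ${\rm G}(J')\to J$, restrict to kernels for $\alpha_1$; factor $\beta\circ{\rm F}(g)$ through the approximation of $M$ to get $s\colon{\rm F}(J)\to J'$, restrict to kernels for $\beta_1$. But your final step is wrong, and in a way that also invalidates your side remark about minimality.

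You claim that $\iota'\circ\beta_1\circ{\rm F}(\alpha_1)\circ\Phi_{\Omega_{\mathscr{Y}}M}=\iota'$ follows by a diagram chase using the level-zero identity. Trace it through: the diagram chase actually gives $\iota'\circ\beta_1\circ{\rm F}(\alpha_1)\circ\Phi_{\Omega_{\mathscr{Y}}M}=\varphi\circ\iota'$, where $\varphi:=\beta'\circ{\rm F}(\alpha')\circ\Phi_{J'}$ is the induced endomorphism of $J'$ (using the paper's labels, $\beta'$ is your $s$ composed appropriately). The level-zero identity only tells you that $g'\circ\varphi=g'$, i.e.\ $\varphi-1_{J'}$ factors through $\Omega_{\mathscr{Y}}M$; it does \emph{not} tell you $\varphi\circ\iota'=\iota'$, and indeed this fails in general. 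So you cannot cancel $\iota'$ to conclude the composite is $1_{\Omega_{\mathscr{Y}}M}$. What is true is that $\varphi$ is an automorphism of $J'$, and this is precisely where \emph{right minimality} of the approximation $g'$ enters (contrary to your explicit claim that minimality is unnecessary here): $g'\varphi=g'$ and $g'$ right minimal forces $\varphi$ to be an isomorphism, hence by the five lemma (or the universal property of the kernel) the induced map $\psi:=\beta_1\circ{\rm F}(\alpha_1)\circ\Phi_{\Omega_{\mathscr{Y}}M}$ on kernels is an automorphism of $\Omega_{\mathscr{Y}}M$, but not the identity. The paper then repairs this by replacing $\beta_1$ with $\psi^{-1}\circ\beta_1$, which is the data actually witnessing Condition~\ref{conditionast} at level one. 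So: same construction, but your closing diagram chase does not give the identity, minimality is essential rather than dispensable, and the fix is to twist by the inverse of the induced automorphism.
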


\begin{proof}
  It suffices to show the case $m=1$. Suppose that $(X,M)$ satisfies Condition~\ref{conditionast}
  with the morphisms $\alpha, \beta$. 
  For a module $M\in \mod B$,
  let $f\colon I\to M$ be a right minimal $(\add \mathscr{Y})$-approximation of $M$ with $I\in \add \mathscr{Y}$.
  Similarly, let $g\colon J\to X$ be a right minimal $(\add \mathscr{X})$-approximation of $X$ with $J\in \add \mathscr{X}$.
  Then, the functor $\rm G$  gives a diagram
  \begin{equation}\label{ApplyG}
    \vcenter{
      \xymatrix{
        & {\rm G}(K) \ar[r]^-{{\rm G}(\imath)}  \ar@{.>}[d]^-{\alpha''} & {\rm G}(I) \ar[r]^-{{\rm G}(f)} \ar@{.>}[d]^-{\alpha'} & {\rm G}(M) \ar[d]^-{\alpha}\\ 
        0 \ar[r] & Z \ar[r]^-{\jmath} \ar@{}[ur]|\circlearrowleft& J \ar[r]^-{g} \ar@{}[ur]|\circlearrowleft& X \ar[r] & 0,  
      }}
  \end{equation}
  where $\imath\colon K:=\Omega_{\mathscr{Y}}(M) \to I$ and $\jmath \colon Z:=\Omega_{\mathscr{X}}(X) \to J$
  are kernels of morphisms $f$ and $g$ respectively, and where the bottom row is exact.
  Furthermore, since $g$ is a right $(\add \mathscr{X})$-approximation and ${\rm G}(I)\in \add \mathscr{X}$ by our assumption,
  there exists a morphism $\alpha'\colon {\rm G}(I)\to J$ such that $g\circ \alpha' = \alpha \circ {\rm G}(f)$.
  In addition, since $g \circ \alpha' \circ {\rm G} (\imath) = \alpha  \circ {\rm G}(f)\circ{\rm G}(\imath) = \alpha \circ{\rm G}(f\imath)=0$,
  there is a morphism $\alpha''\colon {\rm G}(K) \to Z$ such that $\jmath \circ \alpha'' = \alpha' \circ {\rm G}(\imath)$.

  Applying the exact functor ${\rm F}$ to \eqref{ApplyG}, we obtain the following diagram:
  \begin{equation}\label{ApplyF}
    \vcenter{
      \xymatrix@C=44pt{
        0 \ar[r] & K \ar[r]^-{\imath} \ar[d]^-{\Phi_K}& I \ar[r]^-{f} \ar[d]^-{\Phi_I}& M \ar[d]^-{\Phi_M} \ar[r] & 0 \quad \text{(exact)}\\
        0 \ar[r] & {\rm FG}(K) \ar[r]^-{\rm FG(\imath)} \ar[d]^-{{\rm F}(\alpha'')}\ar@{}[ur]|\circlearrowleft& {\rm FG}(I) \ar[r]^-{{\rm FG}(f)} \ar[d]^-{{\rm F}(\alpha')} \ar@{}[ur]|\circlearrowleft& {\rm FG}(M) \ar[d]^-{{\rm F}(\alpha)} \ar[r] & 0 \quad \text{(exact)}\\
        0 \ar[r] & {\rm F}(Z) \ar[r]^-{{\rm F}(\jmath)} \ar@{.>}[d]^-{\beta''} \ar@{}[ur]|\circlearrowleft& {\rm F}(J) \ar[r]^-{{\rm F}(g)} \ar@{.>}[d]^-{\beta'}\ar@{}[ur]|\circlearrowleft& {\rm F}(X) \ar[r] \ar[d]^-{\beta} & 0 \quad \text{(exact)} \\
        0 \ar[r] & K \ar[r]^-{\imath} \ar@{}[ur]|\circlearrowleft& I \ar[r]^-{f} \ar@{}[ur]|\circlearrowleft& M  \ar[r] & 0 \quad \text{(exact).}
      }}
  \end{equation}
  Since $f$ is a right $(\add \mathscr{Y})$-approximation and ${\rm F}(J)\in \add \mathscr{Y}$, there exists a morphism $\beta'\colon {\rm F}(J)\to I$ such that $f\circ \beta' = \beta \circ {\rm F}(g)$. In addition, it induces a morphism $\beta'' \colon {\rm F}(Z)\to K$ such that $\imath \circ \beta''=\beta' \circ{\rm F}(\jmath)$.
  
  By Condition~\ref{conditionast} and the commutativity of the boxes of the diagram \eqref{ApplyF}, we have
  \begin{equation}\nonumber
    1_M \circ f = (\beta\circ {\rm F}(\alpha)\circ \Phi_M) \circ f =
    f \circ (\beta' \circ {\rm F}(\alpha')\circ \Phi_I) = f\circ \varphi,
  \end{equation}
  where we set $\varphi := \beta'\circ {\rm F}(\alpha')\circ \Phi_I$ for simplicity.
  Since $f$ is right minimal, it implies that $\varphi$ is an isomorphism of $I$.
  Using $\varphi$, we can rewrite \eqref{ApplyF} as follows.
  \begin{equation}\nonumber
    \xymatrix@C=50pt{
      0 \ar[r] & K \ar[r]^-{\varphi \imath} \ar[d]_-{\beta''{\rm F}(\alpha'')\Phi_{K}} & I \ar[r]^{f=f\varphi^{-1}} \ar[d]^-{1_I}& M \ar[d]^-{1_M} \ar[r] & 0 \quad \text{(exact)} \\
      0 \ar[r] & K \ar[r]^-{\imath} \ar@{}[ur]|{\circlearrowleft}& I \ar[r]^-{f} \ar@{}[ur]|{\circlearrowleft}& M  \ar[r] & 0 \quad \text{(exact).}
    }  
  \end{equation}

  By the universal property of the kernel,  the map $\beta'' \circ  F(\alpha'')\circ\Phi_K$ must be an isomorphism, and hence there exists an isomorphism $\mu \colon K \to K$ such that  
  \begin{equation}\nonumber
    (\mu \circ \beta'') \circ {\rm F}(\alpha'')\circ \Phi_K = 1_K.  
  \end{equation}
  This shows that the pair $(Z=\Omega_{\mathscr{X}}(X),K = \Omega_{\mathscr{Y}}(M))$ satisfies Condition~\ref{conditionast}
  through the pair of morphisms $\alpha''\colon {\rm G}(K)\to Z$ and
  $\mu \circ \beta'' \colon {\rm F}(Z) \to K$. 

  This finishes the proof.
\end{proof}

Then, the following are immediate.
\begin{proposition}\label{prop:resdim}
  Under Assumption~\ref{niceFun}, suppose that the pair $(X,M)\in \mod A\times \mod B$
  satisfies Condition~\ref{conditionast}, and that
  the $\mathscr{X}$-resolution dimension of $X$ is finite.
  Then,
  \begin{equation}\nonumber
    \mathscr{Y}\resdim M \leq \mathscr{X}\resdim X.
  \end{equation}
\end{proposition}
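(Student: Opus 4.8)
The plan is to leverage Lemma \ref{sectionpair} directly: the hypothesis tells us that Condition~\ref{conditionast} propagates from the pair $(X,M)$ to the pair $(\Omega^m_{\mathscr{X}} X, \Omega^m_{\mathscr{Y}} M)$ for every $m\geq 0$. Let $n := \mathscr{X}\resdim X < \infty$. By definition of the $\mathscr{X}$-resolution dimension, $\Omega^{n+1}_{\mathscr{X}} X = 0$ (equivalently, $\Omega^n_{\mathscr{X}} X \in \add \mathscr{X}$, so that the next right minimal $(\add\mathscr{X})$-approximation is an isomorphism and its kernel vanishes). I would then feed $m = n+1$ into Lemma~\ref{sectionpair}: the pair $(\Omega^{n+1}_{\mathscr{X}} X, \Omega^{n+1}_{\mathscr{Y}} M) = (0, \Omega^{n+1}_{\mathscr{Y}} M)$ satisfies Condition~\ref{conditionast}.

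The key step is then the observation already recorded in the excerpt right after Condition~\ref{conditionast}: if $(X',M')$ satisfies Condition~\ref{conditionast}, then the morphism $\beta\colon {\rm F}(X')\to M'$ is a split epimorphism, so $M'$ is a direct summand of ${\rm F}(X')$. Applying this to $(X',M') = (0, \Omega^{n+1}_{\mathscr{Y}} M)$ gives that $\Omega^{n+1}_{\mathscr{Y}} M$ is a direct summand of ${\rm F}(0) = 0$, hence $\Omega^{n+1}_{\mathscr{Y}} M = 0$. By definition this means $\mathscr{Y}\resdim M \leq n = \mathscr{X}\resdim X$, which is exactly the claimed inequality. (In particular the $\mathscr{Y}$-resolution dimension of $M$ is finite.)

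I don't anticipate a serious obstacle here; the proposition is essentially a one-line corollary of Lemma~\ref{sectionpair} together with the split-epimorphism remark. The only point requiring a little care is the bookkeeping around the convention that $\Omega^{n+1}_{\mathscr{X}} X = 0$ precisely when $\mathscr{X}\resdim X = n$: I should make sure the right minimal $(\add\mathscr{X})$-approximation of a module lying in $\add\mathscr{X}$ is an isomorphism (so that its kernel is zero) and invoke the uniqueness up to isomorphism of right minimal approximations to conclude $\Omega^{n+1}_{\mathscr{X}} X = 0$ unambiguously. Everything else is formal.
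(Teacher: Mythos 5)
Your proof is correct and follows exactly the same route as the paper's: apply Lemma~\ref{sectionpair} to transport Condition~\ref{conditionast} to the syzygy pairs, note that this makes $\Omega^m_{\mathscr{Y}} M$ a direct summand of ${\rm F}(\Omega^m_{\mathscr{X}} X)$, and conclude that vanishing of $\Omega^m_{\mathscr{X}} X$ forces vanishing of $\Omega^m_{\mathscr{Y}} M$. The only difference is that you spell out the indexing bookkeeping ($m = n+1$ where $n = \mathscr{X}\resdim X$) that the paper leaves implicit, which is a fine point of care but not a new idea.
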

\begin{proof}
  Applying Lemma \ref{sectionpair},
  the pair $(\Omega_{\mathscr{X}}^m X, \Omega_{\mathscr{Y}}^m M)$ also satisfies Condition~\ref{conditionast}
  for any non-negative integer $m$.
  Using the observation above,
  $\Omega_{\mathscr{Y}}^m M$ is a direct summand of ${\rm F}(\Omega_{\mathscr{X}}^m X)$.
  In particular, $\Omega_{\mathscr{X}}^m X = 0$
  implies $\Omega_{\mathscr{Y}}^m M = 0$, showing the claimed inequality.
\end{proof}

\begin{proof}[Proof of Theorem~\ref{thm:monotone}]
  For any given module $M\in \mod B$, the pair $({\rm G}(M),M)$ always satisfies Condition~\ref{conditionast}. 
  In fact, we just take $\alpha:=1_{G(M)}$ and $\beta:=\Phi_M^{-1}$ in the statement as
  \begin{equation}\nonumber
    \beta \circ {\rm F}(\alpha) \circ \Phi_M = \Phi_{M}^{-1} \circ 1_{\rm{FG}(M)} \circ \Phi_M = 1_M.
  \end{equation}
  Applying Proposition~\ref{prop:resdim}, we have
  \begin{equation}\nonumber
    \mathscr{Y}\resdim M \leq \mathscr{X}\resdim {\rm G}(M).
  \end{equation}
  Since $M$ is an arbitrary $B$-module, we get the desired equation \eqref{Y<X}.
\end{proof}

\subsection{Intermediate extension}
\label{sec:idempotent}
Let $A$ be a finite dimensional $k$-algebra. For a given idempotent $e\in A$,
we consider the idempotent subalgebra $B:=eAe$.
It is well-known that
the functors
\begin{equation}\nonumber
  \res_e(-) := (-)e, \
  \iind_e(-) := -\otimes_{B} eA, \
  \coind_e(-) := \Hom_{B}(Ae,-),
\end{equation}
respectively called the
restriction, induction, and coinduction functors,
provide a diagram
\begin{equation}\label{triple}
\begin{tikzcd}
    \mod A \ar[rr]{}{\res_e} & &
    \mod B \ar[ll,bend right,swap]{}{\iind_e} \ar[ll,bend left]{}{\coind_e}
  \end{tikzcd}
\end{equation}
with the following properties. See \cite[Chapter I.6]{ASS} or \cite[Section 4.1]{steinberg2016representation} for example
(We note however that \cite[Section 4.1]{steinberg2016representation} use left modules instead of right modules,
but the results hold as-is after taking this into account).

\begin{proposition}[{\cite[Theorem I.6.8]{ASS}\label{resTL}}]
  In the above setting, the following statements hold.
  \begin{enumerate}[\rm (a)]
  \item $\iind_e$ and $\coind_e$ are fully faithful functors such that
    $\res_e\circ \coind_e \cong 1_{\mod B} \cong \res_e\circ \iind_e$, and
    the functor $\coind_e$ is right adjoint to $\res_e$ and
    $\iind_e$ is left adjoint to $\res_e$;
    that is, we have natural isomorphisms
    \begin{eqnarray*}
      \Hom_A(X, \coind_e(M)) &\cong& \Hom_B(\res_e(X), M) \quad \text{and} \\
      \Hom_A(\iind_e(M),X) &\cong& \Hom_B(M, \res_e(X))
    \end{eqnarray*}
    for any $A$-module $X$ and $B$-module $M$.
    Thus, the diagram \eqref{triple} gives an adjoint triple.
  \item $\res_e$ is exact, $\iind_e$ is right exact, and $\coind_e$ is left exact.
  \item $\res_e$, $\iind_e$ and $\coind_e$ preserve indecomposability of modules.
    In addition, $\iind_e$ (resp., $\coind_e$)
    sends projective (resp., injective) modules to projective (resp., injective) modules.
  \end{enumerate}
\end{proposition}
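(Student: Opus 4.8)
The plan is to prove the three groups of assertions in turn, using only the standard tensor--hom formalism together with the identification, natural in $X\in\mod A$,
\[
  \res_e(X)=Xe\;\cong\;\Hom_A(eA,X)\;\cong\;X\otimes_A Ae
\]
of right $B$-modules, where $eA$ is regarded as a $(B,A)$-bimodule and $Ae$ as an $(A,B)$-bimodule. For part (a) I would first produce the two adjunctions. Applying the tensor--hom adjunction to the $(B,A)$-bimodule $eA$ gives $\Hom_A(M\otimes_B eA,X)\cong\Hom_B(M,\Hom_A(eA,X))\cong\Hom_B(M,\res_e X)$ naturally in $M\in\mod B$ and $X\in\mod A$, so $\iind_e$ is left adjoint to $\res_e$; dually, the hom--tensor adjunction for the $(A,B)$-bimodule $Ae$ gives $\Hom_A(X,\Hom_B(Ae,M))\cong\Hom_B(X\otimes_A Ae,M)\cong\Hom_B(\res_e X,M)$, so $\res_e$ is left adjoint to $\coind_e$. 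Next I would compute the composites with $\res_e$: on the one hand $\res_e\iind_e(M)=(M\otimes_B eA)e=M\otimes_B eAe=M\otimes_B B=M$ naturally; on the other hand, using the splitting $Ae=eAe\oplus(1-e)Ae$ of left $B$-modules, whose second summand is annihilated by $B=eAe$, one gets $\res_e\coind_e(M)=\Hom_B(Ae,M)e\cong\Hom_B(eAe,M)=M$ naturally. After checking that these natural isomorphisms are the unit of $\iind_e\dashv\res_e$ and the counit of $\res_e\dashv\coind_e$, full faithfulness of $\iind_e$ and $\coind_e$ follows from the general fact that a left adjoint with invertible unit, and a right adjoint with invertible counit, are fully faithful.

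For part (b), $\res_e=-\otimes_A Ae$ is exact because $Ae$ is a direct summand of ${}_AA$, hence a projective (so flat) left $A$-module; equivalently, $\res_e$ is simultaneously a left and a right adjoint and is therefore exact. Then $\iind_e=-\otimes_B eA$ is right exact as a tensor functor (it is a left adjoint), and $\coind_e=\Hom_B(Ae,-)$ is left exact as a Hom functor (it is a right adjoint).

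For part (c), the indecomposability claims for $\iind_e$ and $\coind_e$ follow from part (a): full faithfulness yields $k$-algebra isomorphisms $\End_A(\iind_e M)\cong\End_B(M)$ and $\End_A(\coind_e M)\cong\End_B(M)$, and since $A$ and $B$ are Artin algebras, a finitely generated module is indecomposable precisely when its endomorphism algebra is local, a property that transfers along these isomorphisms. That $\iind_e$ sends projectives to projectives is because it is a left adjoint of the exact functor $\res_e$, and a left adjoint of an exact functor preserves projectives; dually, $\coind_e$ being a right adjoint of the exact functor $\res_e$ preserves injectives. For $\res_e$ I would establish indecomposability by a direct analysis in terms of the idempotent $e$, noting first that $\res_e$ restricts to an equivalence between $\mod B$ and the essential images $\iind_e(\mod B)$ and $\coind_e(\mod B)$ in $\mod A$, and then handling the remaining modules by hand. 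This last point is the main obstacle: since $\res_e$ is not fully faithful, the clean endomorphism-algebra argument is unavailable, so the behaviour of $\res_e$ on indecomposables must be extracted more carefully from the module structure, and I would allot the bulk of the effort there, cross-checking against the explicit description of $\mod B$ inside $\mod A$ furnished by $\iind_e$ and $\coind_e$.
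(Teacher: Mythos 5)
Your treatment of parts (a) and (b), and of the $\iind_e$ and $\coind_e$ halves of (c), is the standard tensor--hom argument and is sound. (The paper offers no internal proof here; the Proposition is a citation to \cite{ASS}, so there is nothing to compare your route against.) One small remark on (a): the fact that your explicit natural isomorphisms $\res_e\iind_e\cong 1$ and $\res_e\coind_e\cong 1$ literally coincide with the unit of $\iind_e\dashv\res_e$ and the counit of $\res_e\dashv\coind_e$ does deserve the check you promise --- it is true, but the existence of \emph{some} natural isomorphism does not by itself force the unit or counit to be invertible --- and full faithfulness then follows exactly as you say.

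The genuine gap is the item you set aside as "the main obstacle": the claim that $\res_e$ preserves indecomposability is in fact \emph{false}, so no amount of direct analysis will rescue it. A counterexample in exactly the setting of this paper: let $P$ have Hasse diagram $1\to 3\leftarrow 2$, so $A=k[P]$ is the path algebra of that $A_3$ quiver, and take $e=e_1+e_2$; then $eAe\cong k[P']\cong k\times k$, where $P'=\{1,2\}$ is the two-element antichain. The interval module $k_P$ is indecomposable with $\End_A(k_P)\cong k$, yet $\res_e(k_P)$ has dimension vector $(1,1)$ over $k\times k$ and decomposes as $S_1\oplus S_2$. The paper itself relies on precisely this phenomenon in Lemma~\ref{res->int}, where the restriction of an interval module $k_J$ to a full subposet $P'$ is computed to be a direct sum of interval modules over $P'$, one for each connected component of $J\cap P'$, and that number is often greater than one. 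So item (c) as written overstates the truth: only $\iind_e$ and $\coind_e$ preserve indecomposability, and those cases you have already dispatched via full faithfulness and local endomorphism rings. The inclusion of $\res_e$ in that list is a misstatement to be flagged and corrected, not a theorem to be proved, and planning to spend the bulk of the effort there is the wrong allocation.
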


  If we let ${\rm F} = \res_e$ and
  ${\rm G} = \iind_e$ (or ${\rm G} = \coind_e$),
  then indeed ${\rm F}$ is exact and there is a natural equivalence
  $\Phi\colon 1_{\mod B} \overset{\sim}{\to} {\rm F}\circ {\rm G}$
  by Proposition~\ref{resTL}.
  Of course, it is  too much to hope that this pair will restrict nicely to functors
  between arbitrary $\add \mathscr{X}$ and $\add \mathscr{Y}$ as in Assumption~\ref{niceFun}.
  In fact, in the setting of finite posets and their interval-decomposable modules,
  it is possible to construct examples where Assumption~\ref{niceFun} fails for this pair.
For example, we see below that the induction functor $\iind_e$ does not preserve the interval-decomposability of modules in general. 
\begin{example}
    Let $P$ and $P'$ be posets given by 
    \begin{equation}\nonumber
    P: \ 
    \begin{tikzpicture}[baseline=4mm]
    \node (a) at (0,1) {$1$};
    \node (b) at(-1,0) {$2$}; 
    \node (c) at(0,0) {$3$};
    \node (d) at(1,0) {$4$}; 
    \draw[<-] (a)--(c); 
    \draw[<-] (c)--(b); 
    \draw[<-] (c)--(d);
    \end{tikzpicture} \quad \text{and} \quad 
    P': \ 
    \begin{tikzpicture}[baseline=4mm]
    \node (a) at (0,1) {$1$};
    \node (b) at(-1,0) {$2$}; 
    \node (d) at(1,0) {$4$}; 
    \draw[<-] (a)--(b); 
    \draw[<-] (a)--(d);
    \end{tikzpicture}
\end{equation}
respectively. Then, $P'$ is a full subposet of $P$. We have an isomorphism $k[P'] \cong ek[P]e$ of $k$-algebras, where $e := 1-e_3$ is an idempotent of $k[P]$. 
Now, we regard $I:=P'$ as an interval of $P'$ and consider the corresponding interval $k[P']$-module $N:=k_{P'}$. 
In this case, the induced module $\iind_e N\in \mod k[P]$ is indecomposable and such that $\dim_k (\iind_e N)e_i = 1$ for $i\in \{1,2,4\}$ but $\dim_k (\iind_eN)e_3 = 2$.   
This shows that $\iind_e N$ is not an interval $k[P]$-module.
\end{example}

Thus, we consider the functor $\Theta:=\Theta_e : \mod B \rightarrow \mod A$ called the \emph{intermediate extension} \cite{kuhn1994generic} (the \emph{prolongement interm\'ediare} in \cite{BBD}), which is defined by using $\iind_e$ and $\coind_e$ in the following way.
For each $B$-module $M$, by the adjunctions, we get isomorphisms
\begin{eqnarray}
  \Hom_A(\iind_e(M), \coind_e(M)) \cong \Hom_B(\res_e(\iind_e M), M)  \cong
  \Hom_B(M,M) \quad \text{and} \\
  \Hom_A(\iind_e(M), \coind_e(M)) \cong \Hom_B(M, \res_e(\coind_e(M))) \cong \Hom_B(M,M)
\end{eqnarray}
Thus, the identity $1_M$ is associated to the map $\theta_M$ by
\begin{equation*}
    \xymatrix{
      \Hom_A(\iind_e(M), \coind_e(M)) \ar@{}[d]|{\rotatebox{90}{\text{$\in$}}} &
      \ar[l]_-{\sim} \Hom_B(M,M) \ar@{}[d]|{\rotatebox{90}{\text{$\in$}}} \\
    \theta_M   & \ar@{|->}[l] {1_M,}
    }
\end{equation*}
and an $A$-module 
\begin{equation}\nonumber
  \Theta(M) := \Image \theta_M \subseteq \coind_e(M).
\end{equation}
More precisely, $\theta_M$ is the map defined by $m\otimes ea \mapsto [xe\mapsto meaxe]$ for any $m\in M$ and $a,x\in A$.
  In fact, up to an isomorphism,
$\theta_M : \iind_e(M) \rightarrow \coind_e(M)$ is simply the couint $\epsilon: \iind_e \res_e \rightarrow 1_{\mod B}$
for the adjoint pair $(\iind_e, \res_e)$,
evaluated at $\coind_e M$, or the unit
$\eta: 1_{\mod A} \rightarrow \coind_e \res_e$ for the adjoint pair $(\res_e, \coind_e)$ evaluated
at $\iind_e M$.
That is, the following diagram commutes.
\begin{equation*}
  \begin{tikzcd}[column sep=4em]
    & \iind_e \res_e \coind_e M \ar[dr]{}{\epsilon_{\coind_e M}} & \\
    \iind_e M \ar[ur]{}{\cong} \ar[rr]{}{\theta_M} \ar[dr]{}{\eta_{\iind_e M}} & & \coind_e M \\
    & \coind_e \res_e \iind_e M \ar[ur]{}{\cong} &
  \end{tikzcd}
\end{equation*}

For a given morphism $f\colon M\to N$ of $B$-modules,
we have a commutative diagram
\begin{equation}\nonumber
  \xymatrix{
    \iind_e(M) \ar[d]^{\theta_M} \ar[rr]^{\iind_e(f)} && \iind_e(N) \ar[d]^{\theta_N} \\
    \coind_e(M) \ar[rr]^{\coind_e(f)} &\ar@{}[u]|\circlearrowleft& \coind_e(N)
  }
\end{equation}
since $\iind_e$, $\coind_e$ are bi-functorial.
Then, $\Theta(f)$ is defined to be the restriction of $\coind_e(f)$ to $\Theta(M)$:
\begin{equation}\nonumber
    \Theta(f) := \coind_e(f)|_{\Theta(M)}\colon \Theta(M) \longrightarrow \Theta(N).
\end{equation}
This gives a functor
\begin{equation}\label{fun-Theta}
    \Theta \colon \mod B \longrightarrow \mod A
\end{equation}
which is called the \emph{intermediate extension}. It is not an exact functor in general, but preserves monomorphisms and epimorphisms \cite[Proposition 4.6(4)]{kuhn1994generic}. 

Following \cite[Section 4.1]{steinberg2016representation}, 
we can provide the following equivalent construction for $\Theta$.
We additionally define the functors
$\trace_e : \mod A \rightarrow \mod A$
by $\trace_e(X) := XeA$ and
${\rm N}_e: \mod A \rightarrow \mod A/AeA$
by ${\rm N}_e(X) := \{x \in X \mid xAe = 0\}$.
Furthermore, there is the natural inclusion functor
$\iota_e : \mod A/AeA \rightarrow \mod A$, where a right $A/AeA$ module $V$ can be considered as a
right $A$ module via $va := v(a+AeA)$ for each $v \in V$ and $a \in A$.

Together with the above functors, we get what is known as a \emph{recollement} of abelian categories: 
\begin{equation}\label{diag:recollment}
  \begin{tikzcd}
    \mod A/AeA \ar[rr]{}{\iota_e} & &
    \mod A \ar[rr]{}{\res_e} \ar[ll, bend right,swap]{}{ (-)/\trace_e(-)} \ar[ll, bend left]{}{{\rm N}_e}& &
    \mod B . \ar[ll,bend right,swap]{}{\iind_e} \ar[ll,bend left]{}{\coind_e}
  \end{tikzcd}
\end{equation}

\begin{proposition}[{\cite[Corollary~4.12]{steinberg2016representation}}]
  Let $M \in \mod B$. Then,
  \begin{enumerate}[\rm (1)]
  \item $\Image \theta_M = \Theta(M) = \trace_e \coind_e M$
  \item $\Ker \theta_M = {\rm N}_e(\iind_{e}M)$
  \end{enumerate}
  Thus,
  \[
    \trace_e \coind_e M \cong \Theta(M) \cong \iind_e M / {\rm N}_{e}(\iind_e M).
  \]
\end{proposition}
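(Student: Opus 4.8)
The plan is to identify $\Theta(M)$ with each of the three descriptions in turn, working from the explicit formula $\theta_M\colon m\otimes ea \mapsto [xe \mapsto meaxe]$ and using the recollement \eqref{diag:recollment}.

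\medskip

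\textbf{Step 1: $\Image\theta_M = \trace_e\coind_e M$.} By definition $\Theta(M) = \Image\theta_M$, so I only need the second equality of (1). First I would observe that $\trace_e\coind_e M = (\coind_e M)\,eA$ is, by construction, the smallest $A$-submodule of $\coind_e M$ containing $(\coind_e M)e$, equivalently the $A$-submodule generated by $\res_e\coind_e M$. Now $\theta_M$ factors as $\iind_e M \xrightarrow{\;\cong\;}\iind_e\res_e\coind_e M \xrightarrow{\epsilon}\coind_e M$, where $\epsilon$ is the counit of $(\iind_e,\res_e)$. Since $\iind_e M = M\otimes_B eA$, the image of the counit $\epsilon_{\coind_e M}$ is exactly the $A$-submodule of $\coind_e M$ generated by the subspace $(\coind_e M)e$ (this is the content of the formula: $m\otimes ea$ maps to $(me)\cdot a$, with $me \in (\coind_e M)e$). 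Hence $\Image\theta_M = (\coind_e M)eA = \trace_e\coind_e M$. I would present this cleanly by noting $\Image(\epsilon_{\coind_e M}) = \trace_e(\coind_e M)$ holds for the counit evaluated at \emph{any} $A$-module, which is a standard recollement identity, then transport along the isomorphism $\iind_e M \cong \iind_e\res_e\coind_e M$.

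\medskip

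\textbf{Step 2: $\Ker\theta_M = N_e(\iind_e M)$.} Here I would dualize: using the other factorization $\theta_M = \eta_{\iind_e M}\colon \iind_e M \to \coind_e\res_e\iind_e M \xrightarrow{\cong}\coind_e M$, where $\eta$ is the unit of $(\res_e,\coind_e)$. For any $A$-module $X$, the kernel of the unit $\eta_X\colon X\to\coind_e\res_e X = \Hom_B(Ae, Xe)$ is $\{x\in X \mid xAe = 0\} = N_e(X)$: indeed $\eta_X(x)$ is the map $ae\mapsto xae$, which is zero iff $xAe=0$. Applying this with $X = \iind_e M$ and using that the final isomorphism is an isomorphism (so does not affect the kernel) gives $\Ker\theta_M = N_e(\iind_e M)$. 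The excerpt's statement of (2) writes "$N_e(M)$", but in context this must mean $N_e(\iind_e M)$ since $N_e$ is defined on $\mod A$ and $\iind_e M$ is the natural domain; I would phrase it as $\Ker\theta_M = N_e(\iind_e M)$ to match the displayed conclusion $\Theta(M)\cong \iind_e M/N_e(\iind_e M)$.

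\medskip

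\textbf{Step 3: assembling the isomorphisms.} The displayed conclusion is then immediate: $\Theta(M) = \Image\theta_M$ gives the left isomorphism with $\trace_e\coind_e M$ by Step 1, and the first isomorphism theorem applied to $\theta_M\colon\iind_e M\to\coind_e M$ gives $\Theta(M) = \Image\theta_M \cong \iind_e M/\Ker\theta_M = \iind_e M/N_e(\iind_e M)$ by Step 2.

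\medskip

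\textbf{Main obstacle.} There is no deep difficulty; the work is in being careful about which adjunction isomorphism is being used and checking that the two factorizations of $\theta_M$ (through the counit $\epsilon_{\coind_e M}$ and through the unit $\eta_{\iind_e M}$) are literally the ones displayed in the commutative triangle preceding the proposition. The slightly fussy point is matching conventions with \cite{steinberg2016representation}, which uses left modules, so I would either cite their Corollary~4.12 directly after noting the left/right translation is harmless (as the excerpt already remarks), or reprove the two recollement identities $\Image\epsilon_X = \trace_e X$ and $\Ker\eta_X = N_e(X)$ from scratch using the explicit formula for $\theta_M$ — the latter is the honest, self-contained route and is only a few lines.
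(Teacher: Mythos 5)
Your proof is correct and self-contained. The paper itself does not reprove this result—it cites Corollary~4.12 of \cite{steinberg2016representation}—but the argument you give is the natural one, and it uses exactly the two factorizations of $\theta_M$ (through the counit $\epsilon_{\coind_e M}$ of $(\iind_e,\res_e)$ and through the unit $\eta_{\iind_e M}$ of $(\res_e,\coind_e)$) that the paper already displays in the commutative triangle preceding the proposition. Both recollement identities you invoke, $\Image(\epsilon_X)=\trace_e(X)$ and $\Ker(\eta_X)=N_e(X)$ for any $A$-module $X$, are immediate from the explicit formulas $\epsilon_X(xe\otimes ea)=xea$ and $\eta_X(x)(ae)=xae$, and transporting along the isomorphisms $\iind_e M\cong\iind_e\res_e\coind_e M$ and $\coind_e\res_e\iind_e M\cong\coind_e M$ is harmless since isomorphisms preserve images and kernels. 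You are also right that the statement's ``$\Ker\theta_M = N_e(M)$'' must be read as $N_e(\iind_e M)$: the functor $N_e$ is defined on $\mod A$, $\theta_M$ has domain $\iind_e M$, and the displayed conclusion $\Theta(M)\cong \iind_e M/N_e(\iind_e M)$ confirms the intended reading.
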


\begin{proposition}\label{res-Theta}
    We have $\res_e \circ \Theta \cong 1_{\mod B}$. 
\end{proposition}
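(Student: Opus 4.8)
The plan is to identify $\res_e\Theta(M)$ with $\res_e\coind_e(M)$ and then transport along the natural isomorphism $\res_e\circ\coind_e\cong 1_{\mod B}$ of Proposition~\ref{resTL}(a). First I would record the elementary observation that for every $A$-module $X$, the trace submodule $\trace_e(X)=XeA\subseteq X$ restricts to
\[
\res_e\bigl(\trace_e(X)\bigr)=(XeA)e=XeAe=Xe=\res_e(X),
\]
where the only inclusion needing comment, $Xe\subseteq XeAe$, follows at once from $ee=e$ (take the middle factor to be $1$). Functorially, applying the exact functor $\res_e$ to the inclusion $\iota_X\colon\trace_e(X)\hookrightarrow X$ gives an isomorphism $\res_e(\iota_X)\colon\res_e\trace_e(X)\xrightarrow{\sim}\res_e(X)$ (in fact an equality of submodules of $Xe$).

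Next I would specialise to $X=\coind_e(M)$. By the preceding proposition (\cite[Corollary~4.12]{steinberg2016representation}), $\Theta(M)=\trace_e\coind_e(M)$ as a submodule of $\coind_e(M)$, and the canonical monomorphism $\iota_M\colon\Theta(M)\hookrightarrow\coind_e(M)$ is exactly the inclusion of this trace submodule; hence $\res_e(\iota_M)\colon\res_e\Theta(M)\xrightarrow{\sim}\res_e\coind_e(M)$ is an isomorphism. Composing with the component $\nu_M\colon\res_e\coind_e(M)\xrightarrow{\sim}M$ of the natural isomorphism of Proposition~\ref{resTL}(a) produces, for each $M$, an isomorphism $\Psi_M:=\nu_M\circ\res_e(\iota_M)\colon\res_e\Theta(M)\xrightarrow{\sim}M$. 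It then remains to check naturality in $M$: for $f\colon M\to N$ the defining identity $\iota_N\circ\Theta(f)=\coind_e(f)\circ\iota_M$ (from $\Theta(f)=\coind_e(f)|_{\Theta(M)}$), combined with naturality of $\nu$, yields $\Psi_N\circ\res_e\Theta(f)=\nu_N\circ\res_e(\coind_e(f))\circ\res_e(\iota_M)=f\circ\nu_M\circ\res_e(\iota_M)=f\circ\Psi_M$, so that $\Psi\colon\res_e\circ\Theta\xrightarrow{\sim}1_{\mod B}$ is the desired natural isomorphism.

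I do not expect a genuine obstacle here; the computation is short, and the only point deserving a little care is verifying that the pointwise isomorphisms $\Psi_M$ fit together into a natural transformation, which as above reduces to the functoriality relation $\iota_N\circ\Theta(f)=\coind_e(f)\circ\iota_M$ built into the construction of $\Theta$ together with the naturality of $\nu$. As an alternative route that avoids invoking the cited proposition, one can argue directly: since $\res_e$ is exact, applying it to the epi--mono factorisation $\iind_e(M)\twoheadrightarrow\Theta(M)\hookrightarrow\coind_e(M)$ of $\theta_M$ again gives such a factorisation, while the adjunction identities relating $\theta_M$ to the unit and counit maps identify $\res_e(\theta_M)$ with $1_M$ under $\res_e\iind_e(M)\cong M\cong\res_e\coind_e(M)$; it follows that the monomorphism part of the factorisation becomes an isomorphism, giving the same conclusion.
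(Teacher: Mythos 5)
Your proof is correct, and your primary argument takes a genuinely different route from the paper's. The paper computes $\res_e(\theta_M)$ explicitly: it writes down the two standard isomorphisms $M\cong\res_e\iind_e(M)$ and $M\cong\res_e\coind_e(M)$, checks that under these $\res_e(\theta_M)$ becomes $1_M$, and then uses exactness of $\res_e$ to conclude $\res_e(\Theta(M))=\res_e(\Image\theta_M)=\Image(\res_e\theta_M)\cong M$ (naturality is asserted without detail). Your main argument instead goes through the trace description $\Theta(M)=\trace_e\coind_e(M)$ from the cited Proposition and the elementary identity $(XeA)e=XeAe=Xe$, so that the inclusion $\Theta(M)\hookrightarrow\coind_e(M)$ becomes an isomorphism (indeed an equality) after applying $\res_e$; composing with the counit isomorphism $\res_e\coind_e\cong 1$ finishes it. This has two small advantages: it bypasses any element-level computation of $\theta_M$, and naturality is manifest because it reduces to the built-in relation $\iota_N\circ\Theta(f)=\coind_e(f)\circ\iota_M$ plus naturality of $\res_e\coind_e\cong 1$. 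The cost is reliance on the trace characterization $\Theta=\trace_e\circ\coind_e$, which the paper records just before this Proposition but whose proof it outsources to Steinberg. Your closing ``alternative route'' is in fact the paper's own argument, essentially verbatim in spirit.
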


\begin{proof}
    We claim that $\res_e(\theta_M)$ provides the identity map $1_M$ for each $B$-module $M$. 
    In fact, there are isomorphisms 
    \begin{eqnarray*}
        &M\cong \res_e(\iind_e(M))& \quad  (m \mapsto (m\otimes e)e), \\
        &M\cong \res_e(\coind_e(M))& \quad (m\mapsto [xe \mapsto mexe]),
    \end{eqnarray*}
    and therefore
    \begin{equation}\nonumber
        M\cong \res_e(\iind_e(M)) \underset{\sim}{\xrightarrow{\res(\theta_M)}} \res_e (\coind_e(M)) \cong M
    \end{equation}
    is the identity map via these isomorphisms. 
    In addition, since $\res_e$ is exact, we find that 
    \begin{equation}\label{resTheta}
        \res_e(\Theta(M)) = \res_e (\Image \theta_M) = \Image(\res_e (\theta_M)) \cong M
    \end{equation} 
    by the above argument. Then, it clearly gives rise to a natural equivalence $\Phi\colon 1_{\mod B}\xrightarrow{\sim} \res_e\circ \Theta$ as desired. 
\end{proof}

We will use the following characterization of intermediate extensions. 

\begin{proposition}[{\cite[Proposition 4.6(3)]{kuhn1994generic}}] \label{prop:charTheta}
  Let $M$ be a $B$-module. For an $A$-module $X$, we have $\Theta(M)\cong X$ if and only if $X$ satisfies the conditions (i)-(iii) below:
    \begin{enumerate}[\rm (i)]
        \item $\res_e(X) \cong M$.
        \item For any proper submodule $Y$ of $X$, we have $\res_e(Y)\not \cong M$. 
        \item For any proper quotient $Z$ of $X$, we have $\res_e(Z)\not \cong M$. 
    \end{enumerate}
\end{proposition}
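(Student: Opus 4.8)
The plan is to deduce everything from Proposition~\ref{char-subTe} together with two structural facts about $\Theta(M)$: by construction it is a submodule of $\coind_e(M)$, while being the image of $\theta_M\colon \iind_e(M)\to\coind_e(M)$ it is also a quotient of $\iind_e(M)$; moreover $\res_e(\Theta(M))\cong M$ by Proposition~\ref{res-Theta}. Note first that the hypothesis $\End_B(M)\cong k$ forces $M\neq 0$, so all the nonvanishing arguments below are valid and the degenerate case $M=0$ does not arise.

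For the forward implication, suppose $X\cong\Theta(M)$. Then (i) is exactly Proposition~\ref{res-Theta}. Since $\Theta(M)=\Image\theta_M$ is a quotient of $\iind_e(M)$ and $\res_e(\Theta(M))\cong M$, the implication (a)$\Rightarrow$(b) of Proposition~\ref{char-subTe} gives (ii). Dually, since $\Theta(M)$ is a submodule of $\coind_e(M)$, the implication (a')$\Rightarrow$(b') of the same proposition gives (iii).

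For the converse, assume $X$ satisfies (i)--(iii). By (i) and (ii), the implication (b)$\Rightarrow$(a) of Proposition~\ref{char-subTe} yields a surjection $p\colon\iind_e(M)\twoheadrightarrow X$; by (i) and (iii), the dual implication (b')$\Rightarrow$(a') yields an injection $\iota\colon X\hookrightarrow\coind_e(M)$. The composite $\iota p\colon\iind_e(M)\to\coind_e(M)$ is nonzero since $X\neq 0$, and by \eqref{brickhom} the space $\Hom_A(\iind_e(M),\coind_e(M))$ is one-dimensional, spanned by $\theta_M$; hence $\iota p=\lambda\,\theta_M$ for some $\lambda\in k^{\times}$. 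Taking images, $\iota(X)=\Image(\iota p)=\Image\theta_M=\Theta(M)$, and since $\iota$ is injective this gives $X\cong\Theta(M)$, completing the proof.

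There is essentially no serious obstacle remaining once Proposition~\ref{char-subTe} and \eqref{brickhom} are available; the argument is bookkeeping of ``quotient of $\iind_e(M)$'' versus ``submodule of $\coind_e(M)$.'' The only points requiring care are ensuring that the standing hypothesis $\res_e(X)\cong M$ of Proposition~\ref{char-subTe} is in place before it is invoked (this is precisely why condition (i) is singled out), and confirming $M\neq 0$ so that $\iota p\neq 0$ and \eqref{brickhom} applies.
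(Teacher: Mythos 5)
Your proof is correct and follows essentially the same route as the paper's own argument: both directions are reduced to Proposition~\ref{char-subTe}, using that $\Theta(M)$ is by construction simultaneously a quotient of $\iind_e(M)$ and a submodule of $\coind_e(M)$, with the converse direction forming the composite $\iind_e(M)\twoheadrightarrow X\hookrightarrow\coind_e(M)$ and invoking \eqref{brickhom} to identify it with a nonzero scalar multiple of $\theta_M$. The only differences from the paper are minor added bookkeeping (noting $M\neq 0$ and $X\neq 0$), which the paper leaves implicit.
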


\subsection{Application to interval resolution dimensions}
In this section, we study the interval resolution global dimension over a given poset, and prove Theorem \ref{thm:monotone subposet}. 

Let $P$ be a finite poset and $k[P]$ the incidence algebra of $P$. Recall that every full subposet is completely determined by its elements. Thus, giving a set of pairwise orthogonal primitive idempotents of $k[P]$ is equivalent to giving a full subposet of $P$. Under this correspondence, we further obtain an isomorphism $ek[P]e \cong k[P']$ of $k$-algebras for any full subposet $P'\subseteq P$ and the corresponding idempotent $e:=\sum_{x\in P'} e_x$.

Now, we fix a full subposet $P'$ of $P$ and $e:=\sum_{x\in P'}e_x$ such that $ek[P]e\cong k[P']$ as in the previous paragraph. In this situation, there is an adjoint triple
\begin{equation}\label{tripleP}
\begin{tikzcd}
    \mod k[P] \ar[rr]{}{\res} & &
    \mod k[P'] \ar[ll,bend right,swap]{}{\iind} \ar[ll,bend left]{}{\coind}
  \end{tikzcd}
\end{equation}
given in Section \ref{sec:idempotent}, where we set $\res:=\res_e$, $\iind := \iind_e$ and $\coind := \coind_e$.
In addition, we obtain the functor $\Theta:=\Theta_e$ defined in \eqref{fun-Theta}.
We will check that $\res$ and $\Theta$ satisfy Assumption \ref{niceFun}
with respect to the interval-decomposable modules, which shows Theorem~\ref{thm:monotone subposet} by Theorem~\ref{thm:monotone}.

First, we verify the following for the restriction functor.

\begin{lemma}\label{res->int}
    $\res$ sends interval-decomposable modules to interval-decomposable modules.
\end{lemma}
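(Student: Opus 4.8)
The plan is to show that the restriction functor $\res = \res_e = (-)e$ sends each interval $k[P]$-module to an interval $k[P']$-module, and then use additivity. Since every interval-decomposable module is a finite direct sum of interval modules and $\res$ is additive, it suffices to prove that $\res(k_I)$ is interval-decomposable (in fact an interval module or zero) for every interval $I \in \mathbb{I}(P)$.

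First I would recall the explicit description: for an interval $I$ of $P$, the module $k_I$ has $(k_I)_a = k$ for $a \in I$ and $0$ otherwise, with identity structure maps between vertices of $I$. Since $\res_e$ simply discards the vector spaces and structure maps at the vertices outside $P'$, the module $\res(k_I)$ is the representation of $P'$ supported on $I \cap P'$, with $k$ at each vertex of $I \cap P'$ and identity maps wherever the poset relation of $P'$ connects two such vertices (because the original maps in $k_I$ between vertices of $I$ are all identities, and the relation $a \le b$ in $P'$ is the restriction of that in $P$). So $\res(k_I) = k_{I \cap P'}$ as a representation, where we interpret the right-hand side as the representation of $P'$ that is $k$ on $I \cap P'$ with identity maps. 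If $I \cap P' = \emptyset$, this is the zero module, which is (vacuously) interval-decomposable.

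The main point to verify is therefore the following combinatorial claim: if $I$ is an interval of $P$ and $P'$ is a full subposet of $P$, then $I \cap P'$ is an interval of $P'$, i.e.\ it is convex in $P'$ and connected as a poset. Convexity is immediate: if $x, z, y$ with $x, y \in I \cap P'$, $z \in P'$, and $x < z < y$ in $P'$ (hence in $P$), then $z \in I$ by convexity of $I$ in $P$, so $z \in I \cap P'$. However, connectedness of $I \cap P'$ as a poset need \emph{not} hold in general, since removing vertices can disconnect an interval. I expect this to be the main obstacle, and the resolution is that $\res(k_I)$ still decomposes as a direct sum of interval modules even when $I \cap P'$ is disconnected: indeed, writing $I \cap P'$ as the disjoint union of its connected components $C_1, \ldots, C_r$ in $P'$, each $C_j$ is convex in $P'$ (as a connected component of the convex set $I \cap P'$, using that $P'$ carries the induced order) and hence is an interval of $P'$, and the representation $k_{I \cap P'}$ splits as $\bigoplus_{j=1}^r k_{C_j}$ because there are no nonzero structure maps between distinct components. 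This shows $\res(k_I)$ is interval-decomposable.

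To write this cleanly, I would: (1) reduce to the case of a single interval module $k_I$ by additivity; (2) identify $\res(k_I)$ with the representation of $P'$ obtained by restricting the underlying correspondence, noting its support is $I \cap P'$ and all its structure maps are identities; (3) observe $I \cap P'$ is convex in $P'$; (4) decompose $I \cap P'$ into connected components, note each is convex hence an interval of $P'$, and conclude $\res(k_I) \cong \bigoplus_j k_{C_j}$ is interval-decomposable. The only subtlety worth stating explicitly is that a connected component of a convex subset of a poset is again convex, which follows because any element strictly between two elements of one component lies in the convex set $I \cap P'$ and is comparable to them, hence lies in the same component.
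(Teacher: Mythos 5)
Your proposal is correct and follows essentially the same route as the paper: observe that $I \cap P'$ is convex in $P'$ (using fullness of $P'$), decompose it into connected components (which are then intervals of $P'$), and conclude $\res(k_I) \cong \bigoplus_j k_{C_j}$ because there are no nonzero structure maps between distinct components. You supply more justification than the paper's very terse proof (e.g., why each component is itself convex), but the argument is the same.
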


\begin{proof}
    Since $P'$ is full, the intersection $J\cap P'$ is a convex set of $P'$ for any interval $J$ in $P$. Thus, we can decompose it into intervals as $J\cap P' = I_1\sqcup I_2 \sqcup \cdots \sqcup I_m$, where all $I_i$'s are intervals of $P'$. 
    Then, it is easy to see that $\res(k_J)\cong \bigoplus_{i=1}^m k_{I_i}$ as $k[P']$-modules.
\end{proof}

Next, we study the functor $\Theta$.
A key observation is that $\Theta$ realizes a combinatorial operation
called \emph{convex hull} in the following sense (see Proposition \ref{Theta->int}).
For a given subset $S\subseteq P$, 
the convex hull $\conv(S)$ of $S$ is defined to be the smallest convex full subposet of $P$ containing $S$.
We consider the map
\begin{equation}
    \conv \colon \mathbb{I}(P') \longrightarrow \mathbb{I}(P) 
\end{equation}
mapping $I\mapsto \conv(I)$.
While an interval $I$ of $P'$ is convex in $P'$ by definition,
  it is not automatically convex in $P$.
It can be checked that when $I$ is an interval of $P'$, then
\begin{equation}\label{IntMinMax}
    \conv(I)=\{x\in P\mid \text{$a\leq x \leq b$ for some $a,b\in I$}\} 
\end{equation}
forms an interval of $P$.
Note also that $\conv(I)\cap P' = I$ holds in this case.

\begin{proposition}\label{Theta->int}
    $\Theta$ sends interval modules to interval modules. More explicitly, for a given interval $I\in \mathbb{I}(P')$, we have $\Theta(k_I) \cong k_{\conv(I)}$. 
\end{proposition}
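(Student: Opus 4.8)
The plan is to invoke the characterization of $\Theta$ on bricks from Proposition~\ref{prop:charTheta}. First note that each interval module $k_I$ over $k[P']$ is a brick: an endomorphism of $k_I$ is a family of scalars indexed by $I$, and commutativity along the (identity) structure maps of $k_I$ forces these scalars to agree along every edge of the Hasse diagram of $I$, so, $I$ being connected, they are all equal and $\End_{k[P']}(k_I)\cong k$. Thus Proposition~\ref{prop:charTheta} applies with $M=k_I$, and it suffices to verify that $X:=k_{\conv(I)}$ satisfies conditions (i)--(iii) there. Condition (i), that $\res(k_{\conv(I)})\cong k_I$, is immediate from Lemma~\ref{res->int} together with the facts (recorded just after \eqref{IntMinMax}) that $\conv(I)$ is an interval of $P$ with $\conv(I)\cap P'=I$ and that $I$ is connected.

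For (ii) and (iii) I would first record the elementary description of submodules and quotients of an interval module $k_J$: since $\dim_k(k_J)_x\le 1$ for all $x$ and the structure maps of $k_J$ are identities on $J$, the support of any submodule of $k_J$ is a subset $S\subseteq J$ that is up-closed relative to $J$ (that is, $a\in S$, $b\in J$ and $a\le b$ imply $b\in S$), and the submodule is determined by $S$; dually, the support of any quotient of $k_J$ is down-closed relative to $J$. Moreover, a submodule (resp.\ quotient) of $k_J$ whose support is all of $J$ must equal $k_J$, since the relevant comparison map is then an isomorphism at every vertex.

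Now suppose, towards a contradiction, that $Y\subsetneq k_{\conv(I)}$ is a proper submodule with $\res(Y)\cong k_I$, and set $S=\supp Y$, so that $S$ is up-closed relative to $\conv(I)$ and $S\cap P'=I$. For any $x\in\conv(I)$, formula \eqref{IntMinMax} yields $a,b\in I$ with $a\le x\le b$; since $a\in I\subseteq S$, $a\le x$ and $x\in\conv(I)$, up-closedness of $S$ gives $x\in S$. Hence $S=\conv(I)$, forcing $Y=k_{\conv(I)}$ by the previous paragraph, a contradiction; this proves (ii), and (iii) follows by the order-dual argument, using the upper bound $b\ge x$ in $I$ instead of the lower bound $a$. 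I do not expect a genuine obstacle: the only point needing care is to keep ``up-/down-closed relative to $\conv(I)$'' distinct from being closed in all of $P$, and to notice that the argument uses only $\supp\res(Y)=I$ (and $\supp\res(Z)=I$), not the full isomorphism type. A more computational alternative would compute both sides of $\Theta(k_I)\cong\iind(k_I)/N_e(\iind(k_I))$ explicitly, but the brick characterization is cleaner.
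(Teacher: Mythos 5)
Your argument is correct and follows the same overall strategy as the paper: reduce to the brick characterization and verify conditions (i)--(iii) of Proposition~\ref{prop:charTheta} for $X=k_{\conv(I)}$. Two details differ. For the brick hypothesis you give a short direct argument (endomorphisms are families of scalars, forced equal along Hasse edges of $I$, and connectedness of $I$ then forces global agreement), whereas the paper cites \cite[Corollary~5.6]{blanchette2021homological}; your version is self-contained, and it directly establishes $\End_{k[P']}(k_I)\cong k$, which is exactly the hypothesis Proposition~\ref{prop:charTheta} requires (the paper's sentence records the brick property for $k_{\conv(I)}$ over $k[P]$, which is the same cited fact applied on the other side---harmless, since the reference covers both cases, but your phrasing is the one that plugs in cleanly). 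For (ii), the paper argues via minimal generators of the top, using $\min_{P'}I=\min_{P}\conv(I)$ to conclude that a proper submodule of $k_{\conv(I)}$ must omit a vertex of $I$; you instead classify all submodules of $k_{\conv(I)}$ by their up-closed supports and push the inclusion $I\subseteq S$ upward through $\conv(I)$. Both arguments are applications of the same key observation \eqref{IntMinMax}, packaged differently, and (iii) is dual in each case. Either proof is fine; the paper's is marginally shorter, yours is marginally more explicit.
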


\begin{proof}
    Let $I$ be an interval of $P'$. 
    To prove the assertion, it suffices to check that $k_{\conv(I)}$ satisfies all properties (i)-(iii) in Proposition \ref{prop:charTheta}. 

    \begin{enumerate}[\rm (i)]
    \item From the proof of Lemma \ref{res->int}, one can easily check  $\res(k_{\conv(I)})\cong k_{\conv(I)\cap P'} = k_I$.
    
    \item We remind that every interval module has a $1$-dimensional vector space $k$ at each vertex lying in its support.
    For an interval $I$, we denote by $\min_{P'} (I)$ the set of minimal elements of $I$ in $P'$.
    By definition, it provides a set of minimal generators of the top of $k_{I}$. 
    Since $\min_{P'} I = \min_{P} \conv(I)$ holds by \eqref{IntMinMax}, it also provides a set of minimal generators of the top of $k_{\conv(I)}$. 
    Therefore, if we take a proper submodule $Y$ of $k_{\conv(I)}$, then there exists $a\in \min_{P'}I$ such that $a\not\in \supp(Y)$. In,  particular, we have 
    \begin{equation}\nonumber
        \supp(\res(Y)) = \supp(Y)\cap P' \subsetneq I = \supp(k_{I}). 
    \end{equation}
    So, $\res(Y)$ is not isomorphic to $k_{I}$.
    
  \item This is the dual of (ii).
  \end{enumerate}
  It finishes a proof.
\end{proof}

Consequently, we find that the functors $\res$ and $\Theta$ satisfy Assumption \ref{niceFun} with the diagram 
\begin{equation}\nonumber
    \xymatrix{
    \mod k[P] \ar@{->}@/_3mm/[rr]^{\rm \res} \ar@{}[d]|{\rotatebox{0}{$\cup$}} & & \mod k[P'] \ar@{->}@/_3mm/[ll]_{\Theta} \ar@{}[d]|{\rotatebox{0}{$\cup$}}\\
    \add \mathscr{I}_P \ar@{->}@/_3mm/[rr]^{\res|_{\add \mathscr{I}_P}} & &\add \mathscr{I}_{P'}, \ar@{->}@/_3mm/[ll]_{\Theta|_{\add \mathscr{I}_{P'}}}
    }
\end{equation}
where $\mathscr{I}_P$ (resp., $\mathscr{I}_{P'}$) is the set of isomorphism classes of interval modules over $P$ (resp., $P'$). In fact, $\res$ is exact (Proposition \ref{resTL}(b)) and $\res\circ \Theta \cong 1_{\mod k[P']}$ (Lemma \ref{res-Theta}). 
This gives a proof of our main result.

\begin{proof}[Proof of Theorem \ref{thm:monotone subposet}]
Under the above setting, we just apply Theorem \ref{thm:monotone} to $\res$ and $\Theta$ and get the assertion. 
\end{proof}


\section{A classification of posets with interval resolution global dimension zero}\label{Section:Dim=0}\label{sec5}

In this section, we give a complete classification of all finite posets with interval resolution global dimension zero (Theorem \ref{thm:gldim=0}). 
Now, for two positive integers $m,\ell>0$, let $C_{m,\ell}$ be a poset given by adding two distinguished points $\hat{0}$ and $\hat{1}$ to a disjoint union of $A_{m}(e)$ and $A_{\ell}(e)$ so that $\hat{0}$ (resp., $\hat{1}$) is its global minimum (resp., maximum). 
That is, the Hasse diagram of $C_{m,\ell}$ is given by the following quiver $Q_{m,\ell}$:
\begin{equation}\label{Pic:C_nm} 
    \vcenter{
    \xymatrix{
&& 1 \ar[r]^{\alpha_1} 
& \cdots \ar[r]^{\alpha_{m-1}}
& m \ar[rd]^{\alpha_m} \\ Q_{m,\ell} \colon &
\hat{0} \ar[ru]^{\alpha_0} \ar[rd]^{\beta_0}  & & & & \hat{1} \\
&& 1' \ar[r]^{\beta_1}  & \cdots \ar[r]^{\beta_{\ell-1}} & \ell' \ar[ru]^{\beta_{\ell}}
}}
\end{equation}

The aim of this section is to prove the following result. 

\begin{theorem}\label{thm:gldim=0}
Let $P$ be a poset with $n$ vertices and $k[P]$ the incidence algebra of $P$ over a field $k$. Then, the following conditions are equivalent. 
\begin{enumerate}[\rm (a)]
    \item Every $k[P]$-module is interval-decomposable. 
    \item Every indecomposable $k[P]$-module is interval. 
    \item $\intresgldim k[P] = 0$. 
    \item The Hasse diagram of $P$ is either $A_n(a)$ for some orientation $a$ or $C_{m,\ell}$ for some positive integers $m,\ell>0$ with $m+\ell = n-2$. 
\end{enumerate} 
In particular, these conditions do not depend on the characteristic of the base field $k$.
\end{theorem}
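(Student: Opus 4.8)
The plan is to dispatch first the equivalences among (a), (b), (c), which are formal. Indeed (a)$\Rightarrow$(b) follows by applying (a) to indecomposable modules (an indecomposable interval-decomposable module is an interval module by Definition~\ref{def:intervalmod}), (b)$\Rightarrow$(a) is Krull--Schmidt, and (a)$\Leftrightarrow$(c) holds because $\intresdim M=0$ means exactly that the right minimal $(\add\mathscr{I}_P)$-approximation $J_0\to M$ is an isomorphism, i.e.\ $M\in\add\mathscr{I}_P$. After replacing $P$ by its connected components it suffices to treat $P$ connected, so the remaining task is to prove (b)$\Leftrightarrow$(d) for connected $P$; the final clause on the characteristic is then automatic, since condition (d) does not mention $k$.

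For (d)$\Rightarrow$(b): if the Hasse diagram of $P$ is $A_n(a)$ this is Gabriel's theorem, already recorded in Example~\ref{Example:Glintdim=1}(1). If $P=C_{m,\ell}$ I would use the theory of special biserial algebras. First, $k[C_{m,\ell}]$ is special biserial: in the quiver $Q_{m,\ell}$ of \eqref{Pic:C_nm} at most two arrows enter or leave each vertex, the only parallel pair of paths is $\hat 0\rightrightarrows\hat 1$ so the ideal is generated by the single commutativity relation $\alpha_0\cdots\alpha_m-\beta_0\cdots\beta_\ell$, and the ``composable arrow'' conditions hold (no vertex has both two incoming and two outgoing arrows). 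By the classification of indecomposables over special biserial algebras \cite{WW,SW,BR87}, every indecomposable is a string module, a band module, or a non-uniserial projective--injective module. There are no band modules, because the unique undirected cycle of $Q_{m,\ell}$ forces any closed reduced walk to contain all of $\alpha_0\cdots\alpha_m$ or all of $\beta_0\cdots\beta_\ell$, which is forbidden as these are the two sides of the commutativity relation. The only non-uniserial projective--injective is $P_{\hat 0}=I_{\hat 1}\cong k_{C_{m,\ell}}$, an interval module. Finally, every string module is an interval module: since $Q_{m,\ell}$ is a cycle and no string may contain $\alpha_0\cdots\alpha_m$ or $\beta_0\cdots\beta_\ell$ (nor their inverses), a string never wraps around the cycle and in particular never contains both $\hat 0$ and $\hat 1$; a short case analysis on how the underlying walk meets $\hat 0$ and $\hat 1$ then shows its support is always an interval of $C_{m,\ell}$ on which all structure maps of the string module are identities, so the string module is $k_I$ for that interval.

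For (b)$\Rightarrow$(d) I would argue by contraposition together with monotonicity. Suppose the Hasse diagram of $P$ is connected but is neither a path nor any $C_{m,\ell}$. If some vertex has degree $\ge 3$, then it together with three of its Hasse-neighbours spans a full subposet $Q$ whose Hasse diagram is a $D_4$ quiver with no relations (the induced covering relations are exactly the three edges at the centre); $kD_4$ has the non-thin, hence non-interval, indecomposable with dimension vector $(2;1,1,1)$, a positive root of $D_4$ realized for every orientation, so $\intresgldim k[Q]>0$, as in Example~\ref{Example:Glintdim=1}(2). Otherwise every vertex has degree $\le 2$, and the Hasse diagram, being connected and not a path, is a cycle, necessarily of length $\ge 4$ (a triangle would contain a shortcut or a directed cycle); its orientation is acyclic, hence has equally many sources and sinks, and if there were a single source it would be some $C_{m,\ell}$. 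So there are at least two sources; choosing two sources and two sinks yields a full subposet $Q$ whose Hasse diagram is the $4$-cycle oriented with two sources and two sinks, so $k[Q]$ is the path algebra of an affine $\widetilde A_3$ quiver, is representation-infinite, and has non-thin indecomposables, whence $\intresgldim k[Q]>0$. In either case Theorem~\ref{thm:monotone subposet} gives $\intresgldim k[P]\ge\intresgldim k[Q]>0$, so (c) fails.

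The step I expect to be the main obstacle is the last part of (d)$\Rightarrow$(b): enumerating the strings of $k[C_{m,\ell}]$ and verifying that each string module coincides with the interval module on its support. This is exactly where the commutativity relation (as opposed to a zero relation) does the work --- it simultaneously kills the would-be band and prevents a string from having non-convex support --- so one must check carefully that every reduced walk avoiding both sides of that relation has interval support. A secondary point needing care is confirming that the $D_4$- and $\widetilde A_3$-shaped subsets chosen above really are \emph{full} subposets whose induced Hasse diagrams are the claimed quivers with no extra relations, so that Theorem~\ref{thm:monotone subposet} applies.
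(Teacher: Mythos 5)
Your overall strategy coincides with the paper's: dispose of (a)$\Leftrightarrow$(b)$\Leftrightarrow$(c) formally, prove (d)$\Rightarrow$(b) via Gabriel's theorem for $A_n$ and via the theory of special biserial algebras for $C_{m,\ell}$ (this matches Propositions~\ref{prop:st-ba}, \ref{prop:st-int} and Corollary~\ref{cor:Cml}), and prove the contrapositive of (c)$\Rightarrow$(d) by exhibiting a bad full subposet. The $D_4$ step is identical to the paper's, and your check that the induced full subposet on a degree-$\geq 3$ vertex and three of its neighbours is genuinely a $D_4$ quiver with no extra relations is correct and worth spelling out.

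However, there is a genuine gap in your treatment of the cycle case. You assert that, once the Hasse diagram is a cycle with at least two sources, ``choosing two sources and two sinks yields a full subposet $Q$ whose Hasse diagram is the $4$-cycle oriented with two sources and two sinks.'' This fails whenever the cycle has $k\ge 3$ sources (and hence $k\ge 3$ sinks). Label the sources $s_1,\dots,s_k$ and sinks $t_1,\dots,t_k$ in cyclic order; then $s_i<t_j$ holds precisely when $j\in\{i-1,i\}\pmod k$. For a $4$-cycle full subposet on $\{s_a,s_b,t_c,t_d\}$ you would need both $s_a$ and $s_b$ to lie below both $t_c$ and $t_d$, i.e.\ $\{c,d\}\subseteq\{a-1,a\}\cap\{b-1,b\}$; for $a\neq b$ and $k\ge 3$ this intersection has at most one element, so no four-element full subposet is a $4$-cycle --- every such choice collapses to an $A_4$-type path (which has interval resolution global dimension $0$). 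For example, the hexagonal ``crown'' poset $s_1\to t_1\leftarrow s_2\to t_2\leftarrow s_3\to t_3\leftarrow s_1$ has no $4$-cycle full subposet at all, so your argument gives no contradiction for it. The paper avoids this by arguing \emph{directly on $P$}: if the Hasse diagram is a cycle with at least two sources (and sinks), there are no two directed parallel paths, hence no commutativity relations, so $k[P]$ is literally the path algebra of an $\widetilde{A}$ quiver, which is of tame but infinite representation type \cite{DR}, hence admits an indecomposable that is not interval. Alternatively, to stay within your subposet strategy, take the full subposet on \emph{all} sources and sinks; this is a $2k$-cycle with alternating orientation whose incidence algebra is again a path algebra of type $\widetilde{A}$, and then invoke Theorem~\ref{thm:monotone subposet} as before.
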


\subsection{Special biserial algebras}\label{subsec:SBA}
We recall definitions of special biserial algebras and string algebras. We refer to \cite{BR87,erdmann2006blocks} for basics of these algebras. 

Let $Q=(Q_0,Q_1)$ be a finite quiver, where $Q_0$ is the set of vertices of $Q$ and $Q_1$ is the set of arrows of $Q$. For arrows $\alpha$, we denote by $s(\alpha)$ and $t(\alpha)$ the starting point and the terminal point of $\alpha$ respectively. 
We denote by $\alpha\beta$ the path of length $2$ for two arrows $\alpha,\beta$ with $t(\alpha)=s(\beta)$.

\begin{definition}\label{def:SBA}
    Let $Q$ be a finite quiver and $I$ an ideal in the path algebra $kQ$ of $Q$. We say that $kQ/I$ is a \emph{special biserial algebra} if all the following conditions are satisfied: 
    \begin{enumerate}
        \item[\rm (SB1)] For each vertex $v$ in $Q$, there are at most two arrows starting at $v$, and there are at most two arrows ending at $v$. 
        \item[\rm (SB2)] For every arrow $\alpha$ in $Q$, there exists at most one arrow $\beta$ such that $t(\alpha)=s(\beta)$ and $\alpha\beta \not\in I$. 
        \item[\rm (SB3)] For every arrow $\alpha$ in $Q$, there exists at most one arrow $\gamma$ such that $s(\alpha)=t(\gamma)$ and $\gamma\alpha\not\in I$. 
    \end{enumerate}
    It is called \emph{string algebra} if in addition:
    \begin{enumerate}
        \item[\rm (SB4)] $I$ is generated by zero relations. 
    \end{enumerate}
\end{definition}

\begin{definition}
Let $Q=(Q_0,Q_1)$ be a finite quiver. 
For a given arrow $\beta$ in $Q$, we denote by $\beta^{-1}$ a formal inverse of $\beta$
and set $s(\beta^{-1})=t(\beta)$ and $t(\beta^{-1})=s(\beta)$.
We write $(\beta^{-1})^{-1} = \beta$.
The set of formal inverses of arrows in $Q_1$ is denoted by $Q_1^{-1}$. 
We say that a \emph{word} is a sequence $w=w_1w_2\cdots w_{\ell}$ where $w_j\in Q_1\cup Q_1^{-1}$ and $t(w_j) = s(w_{j+1})$ for all $j$. 
In this case, we set $s(w)=s(w_1)$, $t(w)= t(w_{\ell})$ and $w^{-1} = w_{\ell}^{-1}\cdots w_2^{-1}w_1^{-1}$.
A \emph{rotation} of a word $w=w_1w_2\cdots w_{\ell}$ with $s(w) = t(w)$ is a word of the form $w_{i+1}\cdots w_{\ell} w_1 \cdots w_i$.
\end{definition}

Now, we suppose that $kQ/J$ is a string algebra with  $(Q,J)$ satisfying (SB1)-(SB4). We say that
\begin{enumerate}[\rm (1)]
    \item a \emph{string} is a word $w=w_1w_2\cdots w_{\ell}$ in $Q$ such that $w_j\neq w_{j+1}^{-1}$ for all $j$ and no subword of $w$ or its inverse belongs to $J$. In addition, for each vertex $v\in Q_0$, let $1_v$ be the \emph{trivial string} at $v$
     We denote by ${\rm St}(Q,J)$ the set of representatives of strings under the relation $\sim$ which identifies each string with its inverse, 
    
   \item a \emph{band} is a non-trivial string $b$ in $Q$ such that $s(b)=t(b)$ and each power $b^m$ is a string,
     but $b$ itself is not a proper power of any string.
    We denote by ${\rm Ba}(Q,J)$ the set of representatives of bands under the relation $\sim'$ which identifies each band with its rotations and their inverses. 
\end{enumerate}

Next, we define string modules and band modules (we refer to \cite[Chapter II]{erdmann2006blocks} for the detail).
To each string $w \in {\rm St}(Q,J)$, we associate a \emph{string module} $M(w)\in \mod kQ/J$ as follows:
Let $w=w_1w_2\cdots w_m$ be a string. Consider the $A_{m+1}$-type quiver $Q_w$, where the arrows are labelled by $w_1,w_2,\ldots,w_m$ 
and we have $s(w_i) \to t(w_i)$ (resp., $s(w_i) \leftarrow t(w_i)$) if $w_i\in Q_1$ (resp., $w_i\in Q_i^{-1}$). Then, $M(w)$ is defined to be a module over $kQ/J$ obtained by replacing each vertex with the $1$-dimensional vector space $k$ and each arrow with the identity map. 
Notice that $M(w)\cong M(w')$ if and only if $w=w'$ or $w^{-1}=w'$. Each band $b\in {\rm Ba}(Q,J)$ defines a family of band modules, see \cite[Chapter II.3]{erdmann2006blocks}.
All string modules and band modules are indecomposable, and every indecomposable module over $kQ/J$ is either a string module or a band module \cite{BR87}.

Now, we consider special biserial algebras $A=kQ/I$. 
Let $\mathcal{L}$ be the set of vertices $v\in Q_0$ such that $e_{v}A$ is projective-injective and not uniserial, where $e_v$ denotes the corresponding idempotent at $v$. 
Then, $\bar{A}:=A/\bigoplus_{v\in \mathcal{L}}\soc (e_vA)$ is a string algebra. We have an embedding $\iota \colon \mod \bar{A} \to \mod A$ of module categories. 
Then, the set $\ind A$ of isomorphism classes of indecomposable $A$-modules is divided into three sub-classes: (i) projective-injective modules, (ii) string modules and (iii) band modules, where string/band modules are considered in $\mod \bar{A}$. 
In addition, it is known that $A$ is representation-finite if and only if there are no band modules. 

\subsection{Proof of Theorem \ref{thm:gldim=0}}
In this section, we prove Theorem \ref{thm:gldim=0}.
Firstly, we consider a poset $C_{m,\ell}$ for $m,\ell >0$.

\begin{proposition}\label{prop:strings}
Let $A$ be the incidence algebra of $C_{m,\ell}$. Then, it is special biserial. 
    Moreover, the interval module $k_{C_{m,\ell}}$ is the only indecomposable projective-injective, non-uniserial $A$-module up to isomorphism. 
\end{proposition}

\begin{proof}
Let $A$ be the incidence algebra of $C_{m,\ell}$. 
By definition, it is of the form $A=kQ_{m,\ell}/I$, where $Q_{m,\ell}$ is the quiver in \eqref{Pic:C_nm} and $I$ is a two-sided ideal generated by $\alpha_0\alpha_1\cdots\alpha_m - \beta_0\beta_1\cdots \beta_{\ell}$. 
Then, it is easy to check that the pair $(Q_{m,\ell},I)$ satisfies (SB1)-(SB3) in Definition \ref{def:SBA}. Thus, $A$ is special biserial. 
Since the poset $C_{m,\ell}$ has the maximum and minimum, there is a unique indecomposable projective-injective module given by $k_{C_{m,\ell}}$ up to isomorphism. In addition, it is non-uniserial in this case. Thus, we get the assertion. 
\end{proof}

Let $A=kQ_{m,\ell}/I$ be the incidence algebra of a poset $C_{m,\ell}$, where $Q_{m,\ell}$ is the quiver in \eqref{Pic:C_nm} and $I$ is a two-sided ideal generated by $\alpha_0\alpha_1\cdots\alpha_m - \beta_0\beta_1\cdots \beta_{\ell}$.   
By Proposition \ref{prop:strings}, it is special biserial and  
the corresponding string algebra is $\bar{A}=kQ_{m,\ell}/\bar{I}$, where $\bar{I}$
is generated by zero relations $\alpha_0\alpha_1\cdots\alpha_m$ and $ \beta_0\beta_1\cdots \beta_{\ell}$.
We will classify all indecomposable $A$-modules via the embedding $\iota \colon \mod \bar{A}\to \mod A$. 

\begin{proposition}\label{prop:st-ba}
    A complete set of representatives of ${\rm St}(Q_{m,\ell},\bar{I})$ is given by strings of the following form.
    \begin{enumerate}[\rm (i)]
        \item Trivial strings $1_v$ $(v\in Q_0)$; 
        \item $\alpha_i\alpha_{i+1}\cdots \alpha_j$ for $0\leq i \leq j \leq m$ and $(i,j)\neq (0,m)$; 
        \item $\beta_i\beta_{i+1}\cdots \beta_j$ for $0\leq i \leq j \leq \ell$ and $(i,j)\neq (0,\ell)$; 
        \item $(\beta_0\beta_1\cdots \beta_j)^{-1}(\alpha_0\alpha_1\cdots \alpha_i)$ for $0\leq i \leq m-1$ and $0\leq j \leq \ell-1$;
        \item $(\beta_j\beta_{j+1}\cdots \beta_\ell)(\alpha_i\alpha_{i+1}\cdots \alpha_m)^{-1}$ for $1\leq i \leq m$ and $1\leq j \leq \ell$;
        
    \end{enumerate}
    On the other hand, there are no bands in $(Q_{m,\ell}, \bar{I})$. 
\end{proposition}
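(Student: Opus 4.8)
\textbf{Proposed proof of Proposition~\ref{prop:st-ba}.}
The plan is to exploit the fact that the underlying graph of $Q_{m,\ell}$ is a single cycle. First I would record the shape: forgetting orientations, $Q_{m,\ell}$ is a cycle $C$ on the $m+\ell+2$ vertices $\hat 0,1,\ldots,m,\hat 1,\ell',\ldots,1'$, in which $\hat 0$ and $\hat 1$ cut $C$ into exactly two arcs, namely the $\alpha$-arm $\hat 0\to 1\to\cdots\to m\to\hat 1$ (of length $m+1$) and the $\beta$-arm $\hat 0\to 1'\to\cdots\to\ell'\to\hat 1$ (of length $\ell+1$). Since $\bar I$ is generated by the two monomials $\alpha_0\alpha_1\cdots\alpha_m$ and $\beta_0\beta_1\cdots\beta_\ell$, a word $w$ has a subword of $w$ or of $w^{-1}$ lying in $\bar I$ precisely when $w$ traverses one of the two arms in its entirety, in either direction.

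Next I would argue that every string, and every candidate band, is a \emph{simple path} in $C$. A reduced (non-backtracking) walk on a graph in which every vertex has exactly two incident edges is \emph{forced}: after the initial edge, each subsequent edge is the unique edge at the current vertex other than the one just used. Hence a reduced walk on $C$ is obtained by starting at a vertex and proceeding in one of the two rotational senses; it is a simple path as long as it has not yet used all $m+\ell+2$ edges, and the moment it uses all of them it has in particular traversed one of the two arms completely, producing a subword of $w$ or of $w^{-1}$ in $\bar I$. Therefore no string winds around $C$, so every string is a simple path in $C$. The same argument shows that a band would have to be (a power of) the full cyclic walk around $C$, which traverses a complete arm and hence is not a string; thus $\mathrm{Ba}(Q_{m,\ell},\bar I)=\emptyset$, as claimed.

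Then I would classify the simple paths. A simple path in $C$ cannot contain both $\hat 0$ and $\hat 1$: such a path would contain a sub-arc joining $\hat 0$ to $\hat 1$, i.e.\ one of the two arms in its entirety, hence would not be a string. Consequently every string lives either in the full subquiver $Q'$ obtained by deleting $\hat 1$ (and the arrows $\alpha_m,\beta_\ell$) — the two linearly oriented branches $\hat 0\to 1\to\cdots\to m$ and $\hat 0\to 1'\to\cdots\to\ell'$ glued at the source $\hat 0$ — or in the full subquiver $Q''$ obtained by deleting $\hat 0$ (and $\alpha_0,\beta_0$) — namely $1\to\cdots\to m\to\hat 1\leftarrow\ell'\leftarrow\cdots\leftarrow 1'$. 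Every generator of $\bar I$ uses $\alpha_m$ or $\beta_\ell$ (resp.\ $\alpha_0$ or $\beta_0$), which has been deleted, so $Q'$ and $Q''$ carry the zero ideal and are hereditary of type $A_{m+\ell+1}$; their strings are exactly the contiguous sub-walks of the underlying line. Enumerating these for $Q'$ gives the trivial strings together with the directed segments $\alpha_i\cdots\alpha_j$ with $0\le i\le j\le m-1$, the segments $\beta_i\cdots\beta_j$ with $0\le i\le j\le\ell-1$, and the ``valley'' walks $(\beta_0\cdots\beta_j)^{-1}(\alpha_0\cdots\alpha_i)$ with $0\le i\le m-1$, $0\le j\le\ell-1$ passing through $\hat 0$; enumerating them for $Q''$ gives the trivial strings together with $\alpha_i\cdots\alpha_j$ with $1\le i\le j\le m$, $\beta_i\cdots\beta_j$ with $1\le i\le j\le\ell$, and the ``peak'' walks $(\beta_j\cdots\beta_\ell)(\alpha_i\cdots\alpha_m)^{-1}$ with $1\le i\le m$, $1\le j\le\ell$ passing through $\hat 1$. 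Taking the union: every vertex supports a trivial string, giving (i); the $\alpha$-segments from the two lists combine to all $\alpha_i\cdots\alpha_j$ with $0\le i\le j\le m$ \emph{except} $(i,j)=(0,m)$ (which is the forbidden full arm and lies in neither list, since the first requires $j\le m-1$ and the second requires $i\ge 1$), giving (ii), and symmetrically (iii); the valleys give (iv), the peaks give (v). Finally I would note that these strings are pairwise inequivalent under $w\sim w^{-1}$, which is immediate by comparing supports, endpoints, and whether $\hat 0$ or $\hat 1$ occurs as an interior vertex.

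The step I expect to be the main obstacle is making the ``forced reduced walk / no winding'' argument fully rigorous at the level of words with formal inverses — in particular, being careful that the condition ``no subword of $w$ or of $w^{-1}$ lies in $J$'' genuinely rules out \emph{both} orientations of a complete traversal of an arm, and verifying that deleting $\hat 1$ (resp.\ $\hat 0$) really does leave a relation-free quiver so that the reduction to two type-$A$ path algebras is legitimate. Once these points are settled, the remaining enumeration over the two linear quivers and the bookkeeping of the union are routine.
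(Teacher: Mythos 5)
Your proof is correct. The paper's own proof is essentially just the assertion ``by a direct calculation'' followed by the observation that every non-trivial string in (ii)--(v) has $s(w)\neq t(w)$ (hence no bands); your argument is the same enumeration carried out in a structured way, organized around the cycle geometry of $Q_{m,\ell}$, the key step that a string cannot visit both $\hat 0$ and $\hat 1$, and the resulting reduction to the two relation-free $A_{m+\ell+1}$-type subquivers $Q'$ and $Q''$. Your band argument (a closed reduced walk on the cycle must traverse a full arm) is a variant of the paper's ``$s(w)\neq t(w)$ for all non-trivial strings'' and is also valid, since a band is in particular a non-trivial string with coinciding endpoints.
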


\begin{proof}
    Recall that the ideal $\bar{I}$ is generated by paths $a_{0}a_1\cdots a_{m}$ and $b_0b_1\cdots b_{\ell}$. By a direct calculation, we can see that every string in $(Q_{m,\ell},\bar{I})$ is of the form one of (i)-(v). Moreover, we have no bands since every non-trivial string $w$ in (ii)-(v) satisfies $s(w)\neq t(w)$. 
\end{proof}

\begin{proposition}\label{prop:st-int}
    We have a bijection 
    \begin{equation} \label{bij:string-int}
    I\colon
       {\rm St}(Q_{m,\ell},\bar{I}) \longrightarrow \mathbb{I}(C_{m,\ell})\setminus \{C_{m,\ell}\},
    \end{equation}
    sending each string $w$ to an interval $I(w):=\supp(w)$,
    where $\supp(w)$ is the set of all vertices appearing as an endpoint of some arrow
    in $w$.
    Moreover, $\iota (M(w))\cong k_{I(w)}$ holds for any string $w$,
    where $\iota$ is the
      embedding $\iota \colon \mod \bar{A} \to \mod A$.
\end{proposition}

\begin{proof}
  From the description of strings in Proposition \ref{prop:st-ba},
  $I(w)$ is an interval of $C_{m,\ell}$ for any string $w$.
  Furthermore, since there are no repeated arrows in $w$,
  a string module $M(w)$ is just an interval module in $\mod A$ and is isomorphic to $k_{I(w)}$.
  On the other hand, every interval of $C_{m,\ell}$ except for $C_{m,\ell}$ itself can be written of this form.
  Thus, we get the assertions.
\end{proof}

\begin{corollary}\label{cor:Cml}
  Let $A$ be the incidence algebra of $C_{m,\ell}$.
Then, all indecomposable $A$-modules are interval. 
\end{corollary}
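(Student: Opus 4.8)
The plan is to combine the trichotomy for indecomposable modules over a special biserial algebra, recalled in Subsection~\ref{subsec:SBA}, with the two preceding propositions. We have already noted that the incidence algebra $A=kQ_{m,\ell}/I$ satisfies (SB1)--(SB3), hence is special biserial; that $k_{C_{m,\ell}}$ is its unique indecomposable projective-injective module; and that the associated string algebra is $\bar A = kQ_{m,\ell}/\bar I$, with embedding $\iota\colon\mod\bar A\to\mod A$. Therefore every indecomposable $A$-module falls into one of three families: the projective-injective module $k_{C_{m,\ell}}$; the string modules over $\bar A$, viewed in $\mod A$ via $\iota$; and the band modules over $\bar A$, viewed in $\mod A$ via $\iota$. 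It then suffices to check that each of these three families consists of interval modules.

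First, $k_{C_{m,\ell}}$ is by definition the interval module attached to the interval $C_{m,\ell}\in\mathbb{I}(C_{m,\ell})$ (it is trivially convex in itself and connected since $Q_{m,\ell}$ is a connected quiver). Next, by Proposition~\ref{prop:st-ba} there are no bands in $(Q_{m,\ell},\bar I)$, so the family of band modules is empty. Finally, by Proposition~\ref{prop:st-int}, for every string $w\in{\rm St}(Q_{m,\ell},\bar I)$ we have $\iota(M(w))\cong k_{I(w)}$ with $I(w)\in\mathbb{I}(C_{m,\ell})$, so every string module is an interval module. Putting the three observations together gives that every indecomposable $A$-module is interval, which is the assertion.

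Since all the genuine work has been done in Propositions~\ref{prop:st-ba} and~\ref{prop:st-int}, there is no real obstacle here; the only point requiring a little care is that the classification of indecomposables over a special biserial algebra is being invoked correctly, namely that the indecomposables not of string or band type are exactly the (non-uniserial) projective-injective ones, which for $C_{m,\ell}$ reduces to the single module $k_{C_{m,\ell}}$ already identified. In particular, combining this with the bijection of Proposition~\ref{prop:st-int} one even recovers the stronger statement that the interval modules are precisely the indecomposable $A$-modules.
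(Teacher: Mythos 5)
Your proposal is correct and follows essentially the same route as the paper: invoke the string/band/projective-injective trichotomy for the special biserial algebra $A$, note that bands are absent by Proposition~\ref{prop:st-ba}, identify string modules with intervals via Proposition~\ref{prop:st-int}, and observe that the unique non-uniserial projective-injective is $k_{C_{m,\ell}}$ itself. No issues.
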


\begin{proof}
    According to a result explained in Section \ref{subsec:SBA}, every indecomposable $A$-module is isomorphic to a
    projective-injective module, a string module, or a band module.
    However, we have no band modules by Proposition \ref{prop:st-ba}. Thus,
    \begin{eqnarray*}
         \{\iota(M(w)) \mid w\in {\rm St}(Q_{m,\ell},\bar{I})\} \cup \{k_{C_{m,\ell}}\}
        &=& \{k_{I(w)} \mid w\in {\rm St}(Q_{m,\ell},\bar{I})\} \cup \{k_{C_{m,\ell}}\} \\ 
        &=& \{k_I \mid I\in \mathbb{I}(C_{m,\ell})\}
    \end{eqnarray*}
    forms the set of isomorphism classes of indecomposable $A$-modules,
    as desired. Here, we use Proposition \ref{prop:st-int} in the second equality.  
\end{proof}

\begin{corollary}
  Let $A$ be the incidence algebra of $C_{m,\ell}$. Then, the number of isomorphism classes of indecomposable $A$-modules is exactly
  \begin{equation}
   m\ell + \binom{m+\ell + 3}{2}.
  \end{equation}
\end{corollary}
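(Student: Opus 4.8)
The plan is to reduce the count to an enumeration of intervals of $C_{m,\ell}$ and then carry out that enumeration using the explicit list of strings in Proposition \ref{prop:st-ba}. By Corollary \ref{cor:Cml}, every indecomposable $A$-module is interval, and two intervals yield isomorphic interval modules if and only if they are equal (Definition \ref{def:intervalmod}); hence the number of isomorphism classes of indecomposable $A$-modules equals $|\mathbb{I}(C_{m,\ell})|$. By Proposition \ref{prop:st-int}, the assignment $w\mapsto I(w)$ is a bijection from ${\rm St}(Q_{m,\ell},\bar{I})$ onto $\mathbb{I}(C_{m,\ell})\setminus\{C_{m,\ell}\}$, so it suffices to compute $|{\rm St}(Q_{m,\ell},\bar{I})|+1$.

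First I would count the strings of each type (i)--(v) appearing in Proposition \ref{prop:st-ba}. Type (i) contributes the trivial strings $1_v$, one for each of the $|Q_0| = m+\ell+2$ vertices of $Q_{m,\ell}$. Type (ii) contributes one string for each pair $0\le i\le j\le m$ except $(0,m)$, i.e.\ $\binom{m+2}{2}-1$ of them, and symmetrically type (iii) contributes $\binom{\ell+2}{2}-1$. Types (iv) and (v) each contribute $m\ell$ strings, since their indices range over $\{0,\dots,m-1\}\times\{0,\dots,\ell-1\}$ and $\{1,\dots,m\}\times\{1,\dots,\ell\}$ respectively. Summing these and adding one for the full interval $C_{m,\ell}$ (which corresponds to the unique indecomposable projective–injective $A$-module) gives
\[
  |\mathbb{I}(C_{m,\ell})|
  = (m+\ell+2) + \left(\binom{m+2}{2}-1\right) + \left(\binom{\ell+2}{2}-1\right) + 2m\ell + 1 .
\]

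Then I would simplify the right-hand side. Using $\binom{m+2}{2}=\tfrac{m^2+3m+2}{2}$ and the analogous identity in $\ell$, the above expression becomes $\tfrac{m^2+4m\ell+\ell^2+5m+5\ell+6}{2}$, which is the first claimed formula. For the second, expanding $\binom{m+\ell+3}{2}=\tfrac{(m+\ell)^2+5(m+\ell)+6}{2}$ and adding $m\ell$ yields the same value, so the identity $m\ell+\binom{m+\ell+3}{2}=\tfrac{m^2+4m\ell+\ell^2+5m+5\ell+6}{2}$ holds and completes the proof.

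I do not anticipate a genuine obstacle; the only step requiring care is avoiding off-by-one errors in the enumeration. In particular I would verify that the exclusions $(i,j)\ne(0,m)$ in (ii) and $(i,j)\ne(0,\ell)$ in (iii) are exactly what makes the full interval $C_{m,\ell}$ counted once and only once (namely separately, as the projective–injective), and that the convention identifying each string with its inverse is already built into the list of Proposition \ref{prop:st-ba}, so no further quotienting or double-counting correction is needed.
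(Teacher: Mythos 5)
Your proof is correct and takes essentially the same approach as the paper: reduce to counting intervals via Corollary~\ref{cor:Cml} and the bijection of Proposition~\ref{prop:st-int}, then enumerate strings type by type from Proposition~\ref{prop:st-ba} and simplify. The arithmetic and the binomial identity both check out.
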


\begin{proof}
We use the bijection \eqref{bij:string-int}. By a direct calculation, the number of the strings of ${\rm St}(Q_{m,\ell},\bar{I})$ for each case of (i)-(v) in Proposition \ref{prop:strings} is given by $a_{1} := m + \ell +2$, $a_{2}:=\binom{m+2}{2} -1 $, $ a_{3} :=\binom{\ell+2}{2} +1$, $a_{4}:=m \ell$ and $a_{5}:= m \ell$ respectively.
Then, we have  
\begin{equation*}
    \#\mathbb{I}(C_{m,\ell})
       = \# {\rm St}(Q_{m,\ell}, \bar{I}) + 1 
       = a_1 + a_2 +a_3 +  a_4 +a_5 + 1 
       = m\ell + \binom{m+\ell + 3}{2} 
\end{equation*}
as desired.
\end{proof}

Now, we are ready to prove Theorem \ref{thm:gldim=0}. 

\begin{proof}[Proof of Theorem \ref{thm:gldim=0}]
    The equivalences among (a), (b), and (c) are obvious. 
    In addition, we obtain (a) from (d) by combining the results 
    \begin{itemize}
        \item Gabriel's theorem for $A_n$-type quivers, and 
        \item Corollary \ref{cor:Cml} for posets $C_{m,\ell}$. 
    \end{itemize} 

    Thus, it suffices to show that (c) implies (d).
    Suppose that $P$ is a finite poset with interval resolution global dimension $0$. 
    In this case, $P$ has no element $v$ with degree greater than or equal to $3$ in its Hasse diagram. 
    In fact, if such an element $v$ exists, then $P$ must contain a full subposet whose Hasse diagram is the $D_4$-quiver $D_4(b)$ with some orientation $b$. We have already seen in Example \ref{Example:Glintdim=1}(2) that $D_4(b)$ has the interval resolution global dimension $1$. 
    Applying Theorem \ref{thm:monotone subposet}, we obtain  
    \begin{equation}\nonumber
            1 = \intresgldim k[D_4(b)] \leq 
            \intresgldim k[P] = 0, 
    \end{equation}
    a contradiction. 
    
    Therefore, we may assume that every element of $P$ has degree $\leq 2$. In this case, the Hasse diagram of $P$ is an acyclic quiver either of Dynkin type $A$ or extended Dynkin type $\tilde{A}$. 
    The former case is the desired one. 
    For the latter case, we need to exclude the cases when the Hasse diagram of $P$ has at least two sinks and/or at least two sources.
    In this situation, the corresponding incidence algebra is exactly a path algebra of type $\tilde{A}$ since there are no commutative relations.
    In particular, it is of tame representation type, but not finite representation type \cite{DR}. 
    So, there is an indecomposable module which is not interval, a contradiction. 
    Consequently, we get the assertion (d) as desired. This completes the proof.
\end{proof}



\section*{Acknowledgement}
We would like to thank Hideto Asashiba for
fruitful discussions.
We would also like to thank
Nicholas Kuhn for pointing out that the
functor $\Theta$ of subsection~\ref{sec:idempotent}
has already appeared in the literature.

This work is supported by JSPS Grant-in-Aid for Transformative Research Areas (A) (22H05105).
S.T. is supported by JST SPRING, Grant Number JPMJSP2148.


\noindent

\bibliographystyle{plain} 
\bibliography{IntervalCover.bib}

\end{document}